\documentclass{article}

\usepackage{amsmath}
\usepackage{amssymb} 
\usepackage{amsfonts} 
\usepackage{bm}
\usepackage{graphicx}
\usepackage{booktabs}
\usepackage[margin=1in]{geometry}
\usepackage{natbib}
\usepackage{amsthm}
\usepackage[colorlinks]{hyperref}
\hypersetup{citecolor=blue}
\usepackage{authblk} 
\usepackage[capitalize]{cleveref}

\usepackage{xcolor}

\newtheorem{definition}{Definition}
\newtheorem{theorem}{Theorem}
\newtheorem{lemma}{Lemma}
\newtheorem{proposition}{Proposition}
\newtheorem{corollary}{Corollary}

\newtheorem*{theorem*}{Theorem}
\newtheorem*{lemma*}{Lemma}
\newtheorem*{proposition*}{Proposition}
\newtheorem*{corollary*}{Corollary}
\newtheorem*{example*}{Example}
\newtheorem{assumption}{Assumption}

\newcommand{\prox}{\operatorname{prox}}
\newcommand{\dist}{\operatorname{dist}}
\newcommand{\argmin}{\operatorname{argmin}}

\newcommand{\dom}{\operatorname{dom}}

\title{Trajectory-Restricted Optimization Conditions\\ and Geometry-Aware Linear Convergence}

\author[1]{Faris Chaudhry}
\author[1]{Anthea Monod}
\author[2]{Keisuke Yano}
\affil[1]{Imperial College London, UK\\ \texttt{\{faris.chaudhry22, a.monod\}@imperial.ac.uk}}
\affil[2]{Institute of Statistical Mathematics, Japan\\ \texttt{yano@ism.ac.jp}}

\date{}

\begin{document}

\maketitle

\begin{abstract}
Linear convergence of first-order methods is typically characterized by global optimization conditions whose constants reflect worst-case geometry of the ambient space. In high-dimensional or structured problems, these global constants can be arbitrarily conservative and fail to capture the geometry actually encountered by optimization trajectories. In this paper, we develop a trajectory-restricted framework for linear convergence based on localized geometric regularity. We introduce restricted variants of the Polyak--{\L}ojasiewicz inequality, error bound, and quadratic growth conditions that are required to hold only on subsets of the domain. We show that classical convergence guarantees extend under these localized conditions, and in key cases, we develop new arguments that yield explicit relationships between the corresponding constants. The resulting rates are governed by geometric quantities associated with the regions traversed by the algorithm. For polyhedral composite problems, we prove that convergence is controlled by restricted Hoffman constants corresponding to the active polyhedral faces visited along the trajectory. Once the iterates enter a well-conditioned face, the effective condition number improves accordingly. Our work provides a geometric quantification for fast local convergence after active-set or manifold identification and more broadly suggests that linear convergence is fundamentally governed by the geometry of the subsets explored by the algorithm, rather than by worst-case global conditioning.
\end{abstract}

\section{Introduction}

Linear convergence of first-order methods is commonly established through global regularity conditions such as the Polyak--{\L}ojasiewicz (PL) inequality or proximal error bounds. These conditions are now known to be essentially minimal for guaranteeing linear rates across smooth, nonsmooth, and even certain nonconvex composite problems~\citep{Karimi2016PL,Bolte2017ErrorBounds,Drusvyatskiy2018ErrorBounds}. In this classical framework, convergence rates are governed by global geometric constants that quantify curvature or metric regularity of the objective over the entire ambient space. However, much of the existing literature focuses primarily on the existence of such bounds rather than the exactness of the resulting convergence rates. Even when considering explicit convergence rates, these are often global rates that may not be tight. For example, in high-dimensional settings, global constants often scale with the ambient dimension of the problem, leading to theoretical rates that are significantly looser than the fast convergence observed empirically. This discrepancy suggests that global analysis fails to capture the favorable local geometry encountered by optimization trajectories.

In this paper, we develop a trajectory-restricted framework for linear convergence that localizes classical regularity conditions to subsets of the domain. We consider composite problems of the form 
\begin{equation*}
    F(x) = f(x) + g(x),
\end{equation*}
where $f$ is an $L$-smooth (and potentially nonconvex) function, and $g$ is convex and potentially nonsmooth. We introduce restricted variants of the proximal PL inequality and related error bound conditions that are required to hold only on subsets of the domain traversed by the optimization trajectory. Standard convergence proofs extend under these localized conditions, yielding linear rates governed by geometric constants associated with the regions actually explored by the algorithm.

For polyhedral composite problems, our localization reveals a particularly transparent geometric mechanism. Once the iterates enter an \emph{active} polyhedral face, such as the optimal support of a sparse estimator, the relevant conditioning is determined by a restricted \emph{Hoffman constant} associated with that face, rather than by the global constant of the full system. In high-dimensional regimes, this leads to a substantial improvement in the effective condition number and explains the fast local convergence observed after active-set or manifold identification.  More broadly, our work suggests that linear convergence is fundamentally governed not by global worst-case geometry, but by the geometric structure of the subsets encountered along the optimization trajectory.  
Our framework allows for the explicit quantification on how convergence rates improve after manifold identification by relating the local convergence rate to the Hoffman constant of the active set. More broadly, it provides a unified lens through which optimization convergence, polyhedral geometry, and high-dimensional statistical conditions can be understood.\\

To develop this framework, \cref{sec:background} establishes the necessary background, reviews related literature, and formally specifies our problem of interest. \cref{sec:theory} transitions from the standard PL inequality and error bounds to our proposed restricted variants, detailing their relationship and demonstrating how standard linear convergence guarantees adapt to this localized framework. \cref{sec:lasso-svm-applications} then grounds this theory in polyhedral systems, deriving restricted Hoffman constants for the least absolute shrinkage and selection operator (LASSO)~\citep{Tibshirani1996Lasso} and support vector machines (SVMs)~\citep{Boser1992MarginClassifiers, Cortes1995SupportVector}. Finally, \cref{sec:discussion} concludes by exploring the wider implications of geometry-aware convergence and trajectory containment.

\section{Background}
\label{sec:background}

The linear convergence of first-order methods is typically established through geometric regularity conditions that relate function values, gradients, and distances to the solution set. In this section, we provide an overview of existing related work as well as the formal setup for our study.

\subsection{Related Work}

Geometric regularity conditions have been studied in various contexts with different motivations and purposes, depending on the field of study.  Here, we review their occurrence in classical settings, high-dimensional statistics, and optimization.

\paragraph{Classical Regularity Conditions and Linear Convergence.} Classical examples of geometric regularity conditions for the linear convergence of first-order methods include the Polyak--{\L}ojasiewicz inequality, proximal error bounds (EB), and metric regularity \citep{Polyak1963Inequality, Lojasiewicz1963topological}. These conditions provide quantitative relationships of the form
\begin{equation*}
    F(x) - F^* \lesssim \|\nabla F(x)\|^2_{2} \quad \text{or} \quad  \mathrm{dist}(x,\mathcal{X}^*):=\inf_{y\in\mathcal{X}^*}\|x-y\|_{2} \lesssim \|\nabla F(x)\|_{2},
\end{equation*}
and are known to be essentially minimal assumptions for ensuring linear convergence of gradient and proximal methods, even in certain nonconvex and nonsmooth settings~\citep{Bolte2007KLSubanalytic,Attouch2013DescentMethodsSemiAlgebraic,Luo1993ErrorBounds}. Such conditions originated from gradient dominance inequalities for smooth problems and have been extended to broad classes of nonsmooth and structured objectives via the Kurdyka--{\L}ojasiewicz framework, which provides a unifying lens for analyzing descent methods~\citep{Lojasiewicz1963topological, Kurdyka1998, Attouch2013DescentMethodsSemiAlgebraic}. Closely related formulations appear in the form of error bounds for feasible and composite optimization problems, where distances to solution sets are controlled by residuals~\citep{Drusvyatskiy2018ErrorBounds, Zhou2017ErrorBounds, Luo1993ErrorBounds, Pang1997ErrorBounds}. In nonsmooth optimization, recent work has clarified how such conditions interact in increasingly general settings: \citet{drusvyatskiy2021nonsmooth} study Taylor-like model-based methods and relate step sizes, slope error bounds, quadratic growth, and termination criteria, while \citet{Liao2024PMLR} establish implication and equivalence relationships among error bounds, PL, and quadratic growth for weakly convex functions and use them to derive linear convergence of proximal point methods.

In polyhedral optimization, these relationships admit a concrete geometric interpretation through \emph{Hoffman-type} error bounds, where the distance to a feasible set defined by linear constraints is controlled by the constraint violation \citep{Hoffman1952ApproximateSolutions}. The associated constants can be understood as measures of conditioning of the underlying constraint system, which links convergence rates directly to the geometry of the feasible region. More generally, local or level-set error bounds have been studied for a wide range of structured problems, including non-polyhedral settings such as low-rank and nuclear-norm regularization \citep{Zhou2017ErrorBounds, Drusvyatskiy2018ErrorBounds}. However, these analyses typically establish the existence of regularity constants independently of the optimization trajectory and often rely on global worst-case quantities. As a consequence, the resulting convergence rates may be overly conservative in high-dimensional settings. A key insight is that such geometric conditions are sufficient and, in fact, minimal for linear convergence \citep[e.g.,][]{Karimi2016PL,Bolte2017ErrorBounds,Drusvyatskiy2018ErrorBounds}, but the constants governing these rates remain tied to global properties of the problem.

\paragraph{High-Dimensional Statistics and Restricted Regularity.} A common feature of these classical results is that the associated constants are defined globally and thus reflect the worst-case geometric properties of the ambient space. As a result, the convergence rates may be overly pessimistic, particularly in high-dimensional regimes where the ambient dimension $d$ greatly exceeds the sample size 
$n$ and global conditioning deteriorates with dimension. In such settings, global strong convexity typically fails and meaningful guarantees must instead rely on structure.

This observation has motivated the study of localized or restricted notions of regularity. In high-dimensional statistics, for example, conditions such as the restricted eigenvalue (RE) property, restricted strong convexity (RSC), and the restricted isometry property (RIP) quantify curvature only along structured subsets of the parameter space, such as sparse cones. These conditions have been central to establishing statistical consistency and recovery guarantees for estimators such as the LASSO \citep{Candes2005DecodingLP,Bickel2009LassoDantzigSelector,Negahban2009UnifiedMEstimators,Wainwright2019HighDimensional,Vershynin2018HighDimensional}. Moreover, beyond their statistical role, restricted curvature has also been shown to govern the algorithmic convergence of first-order methods in these settings \citep{Agarwal2012FastGlobal}.

While such restricted notions are now common in the statistical literature, their role in optimization has been comparatively less developed. In particular, existing analyses typically relate convergence rates to properties of the objective or data matrix \citep[e.g.,][]{Wainwright2019HighDimensional}, rather than to the geometry of the regions explored by the optimization trajectory, which motivates a trajectory-aware perspective on regularity.  In such a setting, geometric constants are defined not globally but on structured subsets relevant to the iterates of the algorithm.

\paragraph{Optimization: Manifold Identification and High-Dimensional Geometry.} From an optimization perspective, a related but distinct phenomenon is that of \emph{manifold identification}. For a broad class of nonsmooth problems, including those with polyhedral regularizers such as the $\ell_1$-norm, first-order methods identify an \emph{active manifold} or support in finite time, after which the iterates evolve within a lower-dimensional structure \citep{HareLewis2004Active,Lee2012ManifoldIdentification}. This transition induces a qualitative change in the geometry of the problem: once the active set is identified, the optimization effectively proceeds on a restricted subspace and local convergence behavior reflects the conditioning of this reduced system. Under suitable nondegeneracy conditions, such as strict complementarity, this identification occurs after finitely many iterations~\citep{Sun2019ManifoldIdentification}. For instance, \citet{Nutini2019ActiveSet} report that the number of iterations required to identify the support scales as $\mathcal{O}(\log((1/t + L)/\delta))$, where $t$ is the step size, $L$ is the Lipschitz constant, and $\delta$ is the strict complementarity margin. Conversely, if the problem is degenerate, guaranteeing manifold identification in finite time is generally impossible \citep{Burke1988ActiveConstraints, Lee2012ManifoldIdentification}.
Recently, \cite{davis2025active} show that first-order methods evolves asymptotically along with the identified active structure, 
on which the dynamics can be described in the Riemannian geometry.

While these results explain why optimization may enter a fast local regime, existing analyses typically establish the existence of improved rates without explicitly linking them to geometric conditioning constants associated with the identified structure. In particular, the connection between manifold identification and the constants governing linear convergence remains largely implicit.

Our work builds on these perspectives by introducing a trajectory-restricted viewpoint on geometric regularity. Rather than relying on global constants, we consider error bounds and conditioning restricted to subsets of the domain that are relevant to the optimization trajectory, such as active sets, polyhedral faces, or structured cones. In the polyhedral setting, this leads naturally to \emph{restricted Hoffman constants}, which retain the geometric interpretability of classical Hoffman bounds rooted in the conditioning of linear systems via singular values \citep{Hoffman1952ApproximateSolutions,Robinson1981PolyhedralMultifunctions} while adapting to the local structure explored by the algorithm. In contrast to recent notions such as relative Hoffman bounds \citep{Pena2021Hoffman}, which admit only polyhedra as reference sets, our formulation preserves the ambient geometry while localizing the analysis.

\subsection{Problem Setup and the Proximal Gradient Method}

We consider composite optimization problems of the form
\begin{equation}
    \label{eq:composite-problem}
    \min_{x \in \mathbb{R}^d} F(x) := f(x) + g(x),
\end{equation}
where $f: \mathbb{R}^d \to \mathbb{R}$ is a smooth function and $g: \mathbb{R}^d \to \mathbb{R} \cup \{+\infty\}$ is a convex, possibly nonsmooth regularizer. This formulation captures a wide range of problems in optimization, statistics, and machine learning, including constrained optimization (via the indicator function $g(\cdot) = \delta_{\mathcal{X}}(\cdot)$ over a convex set $\mathcal{X}$) and regularized estimators such as the LASSO.

Throughout the paper, we work under the following standard assumptions.

\begin{assumption}[Regularity of the Objective]
    \label{ass:regularity}
    \leavevmode
    \begin{enumerate}
        \item[(i)] The function $f$ is continuously differentiable and $L$-smooth, i.e., $\|\nabla f(x) - \nabla f(y)\|_2 \le L \|x - y\|_2$ for all $x, y \in \mathbb{R}^d$.
        \item[(ii)] The function $g$ is a proper, closed, and convex.
        \item[(iii)] The optimal set $\mathcal{X}^* := \argmin_{x} F(x)$ is nonempty and the optimal value $F^* = \min_{x} F(x)$ is finite.
    \end{enumerate}
\end{assumption}

\paragraph{Proximal Gradient Method.} To solve~\eqref{eq:composite-problem}, we consider the \emph{proximal gradient method}, a standard first-order algorithm for composite problems. Given a step size $1/L$, the iterates are defined by
\begin{equation}
    \label{eq:prox-grad-update}
    x_{k+1} = \prox_{g/L} \left( x_k - \frac{1}{L} \nabla f(x_k) \right),
\end{equation}
where the \emph{proximal operator} $\prox_{\lambda g}$ is given by
\begin{equation*}
    \prox_{\lambda g}(x) := \argmin_{y \in \mathbb{R}^d} \left\{ g(y) + \frac{1}{2\lambda} \|y - x\|_2^2 \right\}.
\end{equation*}

This update can be interpreted as a forward gradient step on $f$ followed by a backward proximal step or regularization step induced by $g$.

\paragraph{Optimality Conditions (KKT).} For the convex composite problem $\min_x F(x) := f(x) + g(x)$, a point $x^*$ is \emph{optimal} if and only if it satisfies the first-order optimality (Karush--Kuhn--Tucker, KKT) condition~\citep{Boyd2004ConvexOptimization, Beck2017FirstOrderMethods}:
\begin{equation*}
    0 \in \nabla f(x^*) + \partial g(x^*),
\end{equation*}
where $\partial g(x^*)$ denotes the subdifferential of $g$ at $x^*$. 
This condition characterizes the solution set and provides a natural notion of optimality residual, which will be central to our analysis.

\paragraph{Measuring Optimality in the Nonsmooth Setting.} In smooth optimization, convergence is typically quantified in terms of the gradient norm. In the composite setting, convergence is measured by generalized notions of gradient size that account for the nonsmooth term $g$.

An important quantity in our analysis is the \emph{proximal gradient mapping}~\citep{Luo1993ErrorBounds},
\begin{equation}
    \label{eq:gradient-map}
    \mathcal{G}_\lambda(x) := \frac{1}{\lambda} \left( x - \prox_{\lambda g} \left( x - \lambda \nabla f(x) \right) \right),
\end{equation}
which measures the discrepancy between the current iterate and the result of one proximal gradient step. In particular, a point $x$ is a stationary point (a fixed point of the proximal update) if and only if $\mathcal{G}_\lambda(x) = 0$.

In nonsmooth settings, the following \emph{generalized gradient size} \citep{Karimi2016PL} admits a natural interpretation as the predicted decrease in the objective after one proximal gradient step of size $\alpha$ and plays a key role in characterizing linear convergence through proximal Polyak--{\L}ojasiewicz-type inequalities.

\begin{definition}[\citet{Karimi2016PL}]
The \emph{generalized gradient size} is given by
\begin{equation*}
    \mathcal{D}_g(x, \alpha) := - 2 
    \alpha \min_{y \in \mathbb{R}^d} \left[ \langle \nabla f(x), y - x \rangle + \frac{\alpha}{2} \|y - x\|_2^2 + g(y) - g(x) \right].
\end{equation*}
\end{definition}

An important property of the generalized gradient size is that $\mathcal{D}_g(x, \alpha)$ is monotonically increasing in $\alpha$: for $0 < \alpha_1 \le \alpha_2$, we have $\mathcal{D}_g(x, \alpha_1) \le \mathcal{D}_g(x, \alpha_2)$ \citep[Appendix E]{Karimi2016PL}. In our subsequent analysis, this allows us to establish bounds using a localized geometric parameter $\nu_{\mathcal{K}}$ and easily extend them to the global smoothness constant $L$, provided $\nu_{\mathcal{K}} \le L$.

These quantities will allow us to connect convergence rates to geometric properties of the regions visited by the optimization trajectory.

\subsection{The Polyak--{\L}ojasiewicz Condition}

A central question in optimization is to understand when first-order methods achieve \emph{linear convergence}, i.e., when the error decays geometrically: for unconstrained smooth problems with $g=0$,
\begin{equation*}
    f(x_k) - f^* \le (1 - \rho)^k (f(x_0) - f^*)
    \quad\text{
for some $\rho \in (0,1)$}.
\end{equation*}
While strong convexity is a classical sufficient condition for such rates, it is often unnecessarily restrictive in practice.

The \emph{Polyak--{\L}ojasiewicz (PL) condition} provides a strictly weaker alternative that still guarantees linear convergence.

\begin{definition}[Polyak--{\L}ojasiewicz Condition; \citet{Polyak1963Inequality, Lojasiewicz1963topological}]
\label{def:pl}
A differentiable function $f : \mathbb{R}^d \to \mathbb{R}$ satisfies the \emph{PL condition} with parameter $\mu > 0$ if
\begin{equation*}
    \label{eq:pl}
    \frac{1}{2} \|\nabla f(x)\|_2^2 \ge \mu \bigl(f(x) - f^*\bigr)
    \quad \text{for all } x \in \mathbb{R}^d,
\end{equation*}
where $f^* = \inf_x f(x)$.
\end{definition}

The PL inequality links the gradient norm to the suboptimality. Intuitively, it ensures that whenever the objective value is far from optimal, the gradient must be sufficiently large so as to prevent the algorithm from stagnating away from the optimum. Importantly, this condition does not require convexity and can hold for certain nonconvex problems. However, if $f$ is $\alpha$-strongly convex, then it satisfies the PL condition with $\mu = \alpha$. Note that the converse does not hold: the PL condition allows for flat directions and non-unique minimizers, making it significantly more flexible.

When $f$ is $L$-smooth (i.e., has $L$-Lipschitz gradient), gradient descent with step size $1/L$ satisfies
\begin{equation*}
    \label{eq:pl-linear-rate}
    f(x_{k+1}) - f^* \le \left(1 - \frac{\mu}{L}\right)(f(x_k) - f^*),
\end{equation*}
yielding a global linear convergence rate.  However, many problems of interest, including those with constraints or nonsmooth regularizers, fall outside this basic setting. In such cases, the PL condition admits a natural extension to composite objectives of the form
\begin{equation*}
    F(x) = f(x) + g(x),
\end{equation*}
where $g$ may be nonsmooth or encode constraints. In this setting, the gradient norm is replaced by a generalized measure of stationarity based on proximal operators or gradient mappings.

For a composite function $F(x)=f(x)+g(x)$ with an $L$-smooth $f$ and a possibly nonsmooth regularizer $g$, the Polyak--{\L}ojasiewicz condition is extended by the generalized gradient size.
\begin{definition}[Proximal Polyak--{\L}ojasiewicz Condition; \citet{Karimi2016PL}]
\label{def:ppl}
A composite function $F : \mathbb{R}^d \to \mathbb{R}$ satisfies the \emph{proximal PL condition} with parameter $\mu > 0$ if
\begin{equation*}
    \label{eq:ppl}
    \frac{1}{2} \mathcal{D}_g(x, L) \ge \mu \bigl(F(x) - F^*\bigr)
    \quad \text{for all } x \in \mathbb{R}^d,
\end{equation*}
where $F^* = \inf_x F(x)$.
\end{definition}

When $g=0$, the generalized gradient size reduces to
$
    \mathcal{D}_g(x,\alpha)=\|\nabla f(x)\|^2_2,
$
so the proximal PL condition recovers the standard PL condition. This shows that the proximal PL condition is a natural extension of the classical PL condition to composite optimization problems. Thus, our subsequent analysis covers nonsmooth composite problems, such as the LASSO, as well as smooth problems, such as ordinary least squares (OLS) as a special case~\citep[Sec 4.1]{Karimi2016PL}.

While these PL conditions provide a functional characterization of linear convergence, they do not by themselves explain how the associated constants depend on the geometric structure of the problem. In many settings, these constants are governed by underlying geometric properties of the objective. In particular, for problems involving polyhedral structures, such geometric behavior is captured by \emph{Hoffman bounds}, which relates constraint violations to distance from the solution set and will be now discussed.

\subsection{Hoffman Constants: Geometry and Conditioning}
\label{sec:hoffman-background}

A central tool in the analysis of polyhedral systems is the \emph{Hoffman constant} \citep{Hoffman1952ApproximateSolutions}. Given a system of linear constraints, the Hoffman constant quantifies how violations of the constraints translate into distance to feasibility.

\begin{definition}[Hoffman Constant; \citet{Hoffman1952ApproximateSolutions}]
\label{def:hoffman}
For a nonempty polyhedral set
$\mathcal{X} = \{x \in \mathbb{R}^d \mid Gx = h, \; Mx \le r\}$
and given norms $\|\cdot\|_V$ on the domain and $\|\cdot\|_W$ on the residual space, the \emph{Hoffman constant} $H^{V, W}$ is the smallest constant such that
\begin{equation*}
    \dist(x, \mathcal{X})
    :=\min_{x'\in\mathcal{X}}\|x-x'\|_V
    \le H^{V, W} 
    \left\|
    \begin{pmatrix}
    Gx - h \\
    {[Mx - r]}_+
    \end{pmatrix}
    \right\|_W
    \quad \text{for all}\; x \in \mathbb{R}^d,
\end{equation*}
where $[\cdot]_+ = \max(0, \cdot)$ denotes the element-wise positive part. 
\end{definition}

Note that the norms $\|\cdot\|_V$ and $\|\cdot\|_W$ can be different in general. Our subsequent analysis focuses primarily on the Euclidean ($\ell_2$) norm for both spaces, which directly connects the Hoffman constant to the spectral properties of the defining matrices. In this case, we simply denote the Hoffman constant by $H$. 
Since the Hoffman constant measures how far a point is from feasibility given how much it violates the constraints, small $H$ means violations translate efficiently into progress toward feasibility, while large $H$ indicates ill-conditioning.

Hoffman constants play a fundamental role in optimization and numerical analysis. They provide the geometric link between residuals and distance to optimality, and thus directly control convergence rates of first-order methods. In particular, for a wide class of problems, linear convergence is governed by an error contraction factor of $1 -\mathcal{O}(1/(H^2 L))$~\citep{Karimi2016PL}. Beyond optimization, Hoffman-type constants are closely related to classical condition measures in numerical linear algebra. In particular, they can be viewed as generalizations of classical matrix condition numbers and are closely connected to condition measures for linear systems such as Renegar's condition number \citep{renegar2001mathematical} and the Stewart--Todd condition measure \citep{stewart1989scaled,todd1990dantzig}, which quantify sensitivity of feasibility and optimality to perturbations.

Despite their importance, computing Hoffman constants (or closely related error bound constants) is computationally challenging. In general, determining the exact constant for a polyhedral system can be formulated as a nonconvex worst-case optimization problem and is known to be NP-hard in related settings~\citep{pena2000understanding, freund1999some}. Moreover, global Hoffman constants reflect worst-case geometry over the entire ambient space. In high-dimensional settings, this can lead to highly pessimistic bounds, even when the optimization trajectory explores only a small, well-conditioned region.

\section{A Trajectory-Restricted Framework for Optimization}
\label{sec:theory}

Classical convergence analysis relies on \emph{global} regularity conditions which are required to hold over the entire domain. While these conditions guarantee linear convergence, they are often governed by worst-case geometric properties and can therefore be overly pessimistic in high-dimensional or structured problems. Building on the classical PL condition and Hoffman bounds introduced in Section~\ref{sec:background}, 
we now develop a trajectory-restricted framework for analyzing optimization algorithms. 

Rather than requiring conditions to hold globally, we allow them to hold only on subsets of the domain that are relevant to the optimization trajectory. The central idea is to replace global geometric constants with quantities defined on subsets of the domain that are actually explored by the algorithm. This leads to localized versions of classical conditions with constants that reflect the geometry of these subsets. We show that these restricted conditions are sufficient to guarantee linear convergence and that the resulting rates depend explicitly on the geometry of the regions visited by the iterates, providing a simple and flexible framework for capturing the discrepancy between global worst-case bounds and the faster convergence observed in practice.

\subsection{The Restricted Proximal Polyak--{\L}ojasiewicz Inequality}
\label{sec:restricted-pl}

We now introduce a localized version of the proximal Polyak--{\L}ojasiewicz inequality, where instead of requiring a regularity condition to hold everywhere, we only require it to hold on a subset of interest.

\begin{definition}[$\mathcal{K}$-Restricted Proximal PL Inequality]
\label{def:restricted-pl}
Let $\mathcal{K} \subseteq \mathbb{R}^d$. The objective function $F$ satisfies the \emph{$\mathcal{K}$-restricted proximal PL inequality} with constant $\nu_{\mathcal{K}} > 0$ if 
\begin{equation}
\label{eq:restricted-pl}
    \frac{1}{2} \mathcal{D}_g(x, L) \ge \nu_{\mathcal{K}} \big(F(x) - F^*\big) \quad \text{for all } x \in \mathcal{K}\cap \mathop{\mathrm{dom}}(F).
\end{equation}
\end{definition}

When $\mathcal{K} = \mathbb{R}^d$, \eqref{eq:restricted-pl} reduces to the standard (global) proximal PL condition. Thus, the restricted formulation is a direct generalization of classical analysis.

Intuitively, the condition \eqref{eq:restricted-pl} states that within the set $\mathcal{K}$, the generalized gradient size remains sufficiently large relative to the suboptimality gap. In other words, the objective exhibits favorable geometric conditioning on $\mathcal{K}$, even if such structure does not hold globally.

The following result shows that this localized regularity is sufficient to guarantee linear convergence of the proximal gradient method, provided the iterates remain in $\mathcal{K}$.

\begin{theorem}[Restricted Linear Convergence]
\label{thm:restricted-convergence}
For $F$ satisfying Assumption~\ref{ass:regularity}, if the current iterate $x_k$ lies in some $\mathcal{K}$ and $F$ satisfies the $\mathcal{K}$-restricted proximal PL inequality with constant $\nu_{\mathcal{K}}$, then the proximal gradient update with step size $1/L$ satisfies:
\begin{equation*}
    F(x_{k+1}) - F^* \le \left( 1 - \frac{\nu_{\mathcal{K}}}{L} \right) (F(x_k) - F^*).
\end{equation*}
\end{theorem}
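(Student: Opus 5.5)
The plan is to reproduce the one-step argument of \citet{Karimi2016PL} for the proximal PL condition. The key observation is that it only uses the PL inequality at the current point $x_k$, so it goes through unchanged when the inequality holds merely on $\mathcal{K}$.

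\textbf{Step 1: descent lemma.} Since $f$ is $L$-smooth by Assumption~\ref{ass:regularity}(i), for every $y$ we have
\[
f(y) \le f(x_k) + \langle \nabla f(x_k), y - x_k\rangle + \tfrac{L}{2}\|y-x_k\|_2^2 .
\]
Adding $g(y)$ to both sides, and adding and subtracting $g(x_k)$, gives
\[
F(y) \le F(x_k) + \Big[\langle \nabla f(x_k), y - x_k\rangle + \tfrac{L}{2}\|y-x_k\|_2^2 + g(y) - g(x_k)\Big].
\]

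\textbf{Step 2: identify the proximal step as the minimizer.} The update $x_{k+1} = \prox_{g/L}(x_k - \nabla f(x_k)/L)$ is exactly the minimizer of the bracketed expression over $y$. To see this, complete the square:
\[
\langle \nabla f(x_k), y-x_k\rangle + \tfrac{L}{2}\|y-x_k\|_2^2 = \tfrac{L}{2}\big\|y - (x_k - \nabla f(x_k)/L)\big\|_2^2 + \text{const}.
\]
The minimizer exists and is unique because $g$ is proper, closed and convex, so the objective is strongly convex. By the definition of $\mathcal{D}_g$ with $\alpha = L$, the minimum value of the bracket equals $-\frac{1}{2L}\mathcal{D}_g(x_k, L)$. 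Taking $y = x_{k+1}$ therefore yields
\[
F(x_{k+1}) \le F(x_k) - \tfrac{1}{2L}\mathcal{D}_g(x_k,L).
\]

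\textbf{Step 3: apply the restricted inequality and subtract $F^*$.} We need $x_k \in \mathcal{K}\cap\dom(F)$ so that \eqref{eq:restricted-pl} can be used at $x_k$. Membership in $\mathcal{K}$ is assumed. For $\dom(F)$, note that $x_k$ is finite-valued whenever $k\ge1$, since it is a proximal output and so lies in $\dom g$. If $F(x_k)=+\infty$ the claimed inequality holds trivially, so we may assume $x_k \in \dom F$. Then
\[
-\tfrac{1}{2L}\mathcal{D}_g(x_k,L) \le -\tfrac{\nu_{\mathcal{K}}}{L}\big(F(x_k)-F^*\big).
\]
Substituting this into Step 2 and subtracting $F^*$ from both sides gives
\[
F(x_{k+1}) - F^* \le \Big(1-\tfrac{\nu_{\mathcal{K}}}{L}\Big)\big(F(x_k)-F^*\big).
\]

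\textbf{Main obstacle.} The only delicate point is Step 2, namely checking that the constant scaling in the definition of $\mathcal{D}_g$ matches the descent lemma exactly. With the factor $-2\alpha$ in the definition and $\alpha=L$, the minimum of the bracket is precisely $-\mathcal{D}_g/(2L)$, so the scaling works out.

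No membership of $x_{k+1}$ in $\mathcal{K}$ is needed for this one-step statement. Iterating the bound across multiple steps would require trajectory containment, but that lies outside this theorem.
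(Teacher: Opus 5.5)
Your proof is correct and follows the paper's argument. The paper likewise combines the one-step bound $F(x_{k+1}) \le F(x_k) - \frac{1}{2L}\mathcal{D}_g(x_k,L)$ with the restricted PL inequality applied only at $x_k$; the only differences are that the paper cites this bound from Karimi et al.\ (Theorem 5) instead of re-deriving it from the descent lemma and the prox characterization as you do, and that it leaves implicit the requirement $x_k \in \dom(F)$ that you handle explicitly.
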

\begin{proof}
    We begin with the proximal gradient descent lemma for composite functions with an $L$-smooth component~\citep[Theorem 5]{Karimi2016PL}. 
    For step size $\lambda = 1/L$, we have the bound:
    \begin{equation*}
        F(x_{k+1}) \le F(x_k) - \frac{1}{2L} \mathcal{D}_g(x_k, L).
    \end{equation*}
    Subtracting the optimal value $F^*$ from both sides and invoking the restricted PL inequality 
    yields:
    \begin{align}
        F(x_{k+1}) - F^* &\le F(x_k) - F^* - \frac{1}{L} \left( \frac{1}{2} \mathcal{D}_g(x_k, L) \right) \nonumber \\
        &\le (F(x_k) - F^*) - \frac{\nu_{\mathcal{K}}}{L} (F(x_k) - F^*) \nonumber \\ 
        &= \left(1 - \frac{\nu_{\mathcal{K}}}{L} \right) (F(x_k) - F^*). \nonumber
    \end{align}
    This completes the proof.
\end{proof}

Note that the regularity condition in the proof is only invoked on $\mathcal{K}$. This highlights the important feature of our proposed framework that restricted versions of classical conditions can be introduced straightforwardly and standard convergence arguments carry over with minimal modification, while yielding sharper, geometry-dependent rates.

In practice, the relevant set may vary along the optimization trajectory. For example, in polyhedral problems, the active set may change from one iteration to the next. This leads naturally to a trajectory-dependent notion of convergence rate. 

\begin{corollary}[Trajectory-Dependent Rates]
\label{cor:varying-sets}
    Let $\{x_k\}_{k \ge 0}$ be the sequence of iterates generated by the proximal gradient method with step size $1/L$. 
    Assume that there exists a subset sequence $\{\mathcal{K}_{k}\}_{k\ge 0}$ such that 
    for each iteration $k$,
    the iterate $x_k$ lies in $\mathcal{K}_k$ and $F$ satisfies the $\mathcal{K}_k$-restricted proximal PL inequality with constant $\nu_{\mathcal{K}_k}$.
    Then:
    \begin{equation*}
        F(x_k) - F^* \le \left[ \prod_{i=0}^{k-1} \left( 1 - \frac{\nu_{\mathcal{K}_i}}{L} \right) \right] (F(x_0) - F^*) \quad\text{for all } k \ge 1.
    \end{equation*}
\end{corollary}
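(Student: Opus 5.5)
The argument is a direct induction on $k$, with Theorem~\ref{thm:restricted-convergence} applied once per iteration. Write $e_k := F(x_k) - F^*$. For each $i \ge 0$ the hypotheses of Theorem~\ref{thm:restricted-convergence} hold with $\mathcal{K} = \mathcal{K}_i$: the iterate $x_i$ lies in $\mathcal{K}_i$, and $F$ satisfies the $\mathcal{K}_i$-restricted proximal PL inequality with constant $\nu_{\mathcal{K}_i}$. Hence we have the one-step contraction
\begin{equation*}
    e_{i+1} \le \left(1 - \frac{\nu_{\mathcal{K}_i}}{L}\right) e_i .
\end{equation*}
For this to apply, $x_i$ must lie in $\mathcal{K}_i \cap \dom(F)$. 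For $i \ge 1$ this is automatic, since $x_i$ is the output of a proximal step and so lies in $\dom(g) = \dom(F)$. For $i = 0$ we assume $F(x_0) < \infty$; otherwise the claimed bound is vacuous.

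The base case $k = 1$ is exactly the contraction with $i = 0$. For the inductive step, assume the bound holds for some $k$. Applying the contraction with $i = k$ and then the inductive hypothesis gives
\begin{equation*}
    e_{k+1} \le \left(1 - \tfrac{\nu_{\mathcal{K}_k}}{L}\right)\prod_{i=0}^{k-1}\left(1 - \tfrac{\nu_{\mathcal{K}_i}}{L}\right) e_0 .
\end{equation*}
Multiplying the inequality by $1 - \nu_{\mathcal{K}_k}/L$ must not reverse it, so this step requires every factor to be nonnegative.

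The only point that needs real care is this nonnegativity, $\nu_{\mathcal{K}_i} \le L$. I would obtain it from the descent inequality used in the proof of Theorem~\ref{thm:restricted-convergence}. Since $F(x_{i+1}) \ge F^*$, it gives $\tfrac{1}{2}\mathcal{D}_g(x_i, L) \le L\, e_i$. Combined with the restricted PL inequality at $x_i$, this yields $\nu_{\mathcal{K}_i} e_i \le L e_i$, so $\nu_{\mathcal{K}_i} \le L$ whenever $e_i > 0$.

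If instead $e_i = 0$ for some $i$, then $x_i$ is optimal and $e_j = 0$ for all $j \ge i$ by monotone descent. In that case the bound holds trivially, because its right-hand side is a product multiplied by the nonnegative $e_0$. The exact sign of the product does not matter here. If some factor were negative, the earlier inequalities would already force $e_i = 0$ at that index, so this case is the same as the previous one.

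With these degenerate cases handled, telescoping the one-step contractions gives the stated product bound for all $k \ge 1$.
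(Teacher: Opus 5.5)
Your main line is exactly the paper's proof: apply Theorem~\ref{thm:restricted-convergence} with $\mathcal{K}=\mathcal{K}_i$ at each step and telescope. You are right that unrolling needs every factor $1-\nu_{\mathcal{K}_i}/L$ to be nonnegative, a point the paper's proof does not address. Your argument that $\nu_{\mathcal{K}_i}\le L$ whenever $e_i>0$ is correct; it follows from $\frac{1}{2L}\mathcal{D}_g(x_i,L)\le F(x_i)-F(x_{i+1})\le e_i$ together with the PL inequality.

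The gap is in the degenerate case. Once $e_i=0$, the restricted PL inequality at $x_i$ reads $0\ge 0$ and holds for every $\nu_{\mathcal{K}_i}>0$, so a factor can be negative. Then the sign of the product does matter, contrary to your claim. The left-hand side is $0$, while the right-hand side is (product)$\cdot e_0$. This is strictly negative if $e_0>0$, exactly one factor is negative and the rest are positive. Nonnegativity of $e_0$ does not rescue this. Your last sentence, that a negative factor forces $e_i=0$, only reduces back to this case and does not resolve it.

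No argument can close this gap, because the statement as written is false there. Take $d=1$, $f(x)=\frac{L}{2}x^2$, $g\equiv 0$, $x_0\neq 0$. Then $x_k=0$ for all $k\ge 1$ and $\mathcal{D}_g(x,L)=L^2x^2$. So $\mathcal{K}_0=\mathbb{R}$ admits $\nu_{\mathcal{K}_0}=L/2$, while $\mathcal{K}_1=\mathcal{K}_2=\{0\}$ admit any constant, say $\nu_{\mathcal{K}_1}=3L$. At $k=2$ the claimed bound reads $0\le \frac12\cdot(-2)\,e_0=-e_0<0$.

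The repair is to add the hypothesis $\nu_{\mathcal{K}_i}\le L$ for all $i$, as the paper does in Corollary~\ref{cor:tail-convergence}. Equivalently, replace each $\nu_{\mathcal{K}_i}$ by $\min\{\nu_{\mathcal{K}_i},L\}$. This is still a valid restricted PL constant, since $F(x)-F^*\ge 0$ means the inequality survives decreasing the constant. With all factors in $[0,1)$ your induction goes through verbatim. Your $e_i>0$ argument then shows the extra hypothesis only constrains iterates that are already optimal.
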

\begin{proof}
    The proof follows by recursive application of Theorem~\ref{thm:restricted-convergence}. For any step $i$, since $x_i \in \mathcal{K}_i$, we have:
    \begin{equation*}
        F(x_{i+1}) - F^* \le \left( 1 - \frac{\nu_{\mathcal{K}_i}}{L} \right) (F(x_i) - F^*).
    \end{equation*}
    Unrolling this recurrence relation from $i = k-1$ down to $0$ yields the product bound.
\end{proof}

Because the bound is a product of step-wise contractions, the effective convergence rate over $k$ iterations is governed by the geometric mean of the local rates along the trajectory.

A particularly important consequence arises when the iterates eventually enter and remain within a favorable set, such as an active manifold in a polyhedral problem after a transient phase. In this case, the convergence behavior transitions from a potentially slow transient phase to a faster, geometry-driven regime.

\begin{corollary}[Trajectory Tail Containment]
\label{cor:tail-convergence}
    Let $\mathcal{K} \subseteq \mathbb{R}^d$. Assume the objective $F$ satisfies the $\mathcal{K}$-restricted proximal PL inequality with constant $\nu_{\mathcal{K}}$. If the sequence of iterates enters and remains within $\mathcal{K}$ for all $k \ge T$ (i.e., $x_k \in \mathcal{K}$ for $k \ge T$) and if $\nu_{\mathcal{K}}\le L$, then for all $k \ge T$, the proximal gradient method with step size $1/L$ converges linearly at the restricted rate:
    \begin{equation*}
        F(x_k) - F^* \le \left( 1 - \frac{\nu_{\mathcal{K}}}{L} \right)^{k-T} (F(x_T) - F^*).
    \end{equation*}
\end{corollary}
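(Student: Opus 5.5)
The plan is to apply \cref{thm:restricted-convergence} once for each iteration $k \ge T$ and then chain the resulting one-step contractions by induction on $k - T$. It is essentially \cref{cor:varying-sets} with the constant set sequence $\mathcal{K}_i = \mathcal{K}$ for $i \ge T$, restarted at index $T$. The iterates before time $T$ play no role; we only need the suboptimality value $F(x_T) - F^*$ at the entry time.

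\emph{Base case} ($k = T$). The claimed bound reads $F(x_T) - F^* \le F(x_T) - F^*$, which holds trivially.

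\emph{Inductive step.} Fix $k \ge T$ and assume the bound at $k$. Since $x_k \in \mathcal{K}$ and $F$ satisfies the $\mathcal{K}$-restricted proximal PL inequality, \cref{thm:restricted-convergence} applies at iterate $x_k$:
\begin{equation*}
F(x_{k+1}) - F^* \le \left(1 - \frac{\nu_{\mathcal{K}}}{L}\right)\bigl(F(x_k) - F^*\bigr).
\end{equation*}
Combining this with the inductive hypothesis gives the bound at $k+1$, with exponent $k+1-T$.

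The only delicate step is multiplying inequalities, which requires the factor $1 - \nu_{\mathcal{K}}/L$ to be nonnegative. Otherwise it could flip signs, and the power $(1 - \nu_{\mathcal{K}}/L)^{k-T}$ would not be a meaningful contraction. This is exactly why the hypothesis $\nu_{\mathcal{K}} \le L$ is imposed, since it gives $0 \le 1 - \nu_{\mathcal{K}}/L < 1$.

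We also need $F(x_k) - F^* \ge 0$, which holds because $F^* = \min F$ by Assumption~\ref{ass:regularity}(iii). For the restricted inequality to apply, $x_k$ must lie in $\dom(F)$. This holds for all $k \ge 1$, since proximal iterates lie in $\dom g$ and $f$ is finite everywhere. For $k = T = 0$ it is implicit in $F(x_0)$ being finite.
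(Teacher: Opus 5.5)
Your proposal is correct and follows the paper's own argument. The paper restarts \cref{cor:varying-sets} at index $T$ with the constant sequence $\mathcal{K}_i = \mathcal{K}$, which amounts to chaining the one-step contraction of \cref{thm:restricted-convergence}. Your direct induction does the same thing, while also making explicit the reindexing, the $\dom(F)$ membership, and the role of $\nu_{\mathcal{K}} \le L$ in keeping the contraction factor nonnegative.
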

\begin{proof}
    We apply Corollary~\ref{cor:varying-sets} starting from index $T$. For all $i \in [T, k-1]$, the set is static $\mathcal{K}_i = \mathcal{K}$ and the constant is $\nu_{\mathcal{K}_i} = \nu_{\mathcal{K}}$. The product term $\prod_{i=T}^{k-1} (1 - \nu_{\mathcal{K}_i}/L)$ simplifies to the power $(1 - \nu_{\mathcal{K}}/L)^{k-T}$.
\end{proof}

Note that this result decouples the rate of convergence from the identification of the set~\citep{Sun2019ManifoldIdentification, HareLewis2004Active, Lee2012ManifoldIdentification}. The convergence rate during the transient phase ($k < T$) may be slow (sublinear or governed by a weak global constant), but once the iterates are contained in the set, the faster restricted rate takes over.

\subsection{Restricted Error Bounds and Their Relation to PL}
\label{sec:restricted-eb}

While the proximal PL inequality is convenient for analyzing convergence rates, the geometry of optimization problems is often more naturally expressed in terms of error bounds. We thus introduce a restricted version of the proximal error bound and show that it is related to the restricted PL condition.

\begin{definition}[$\mathcal{K}$-Restricted Proximal Error Bound]
\label{def:restricted-eb}
The objective function $F$ satisfies the $\mathcal{K}$-restricted proximal error bound with constant $\mu_{\mathcal{K}} > 0$ if
\begin{equation*}
    \dist(x, \mathcal{X}^*) \le \mu_{\mathcal{K}} \| \mathcal{G}_{1/L}(x) \|_2 \quad \text{for all} \; x \in \mathcal{K}\cap \mathop{\mathrm{dom}}(F),
\end{equation*}
where $\mathcal{X}^* := \argmin_{x} F(x)$.
\end{definition}

The relationship between proximal error bounds and proximal PL inequalities is well understood in the global setting: For convex objectives, these conditions are known to be equivalent~\citep[Corollary 3.6 and Theorem 5 respectively]{Drusvyatskiy2018ErrorBounds, Bolte2017ErrorBounds}. This equivalence has also been extended to certain nonconvex settings by~\citet[Appendix G]{Karimi2016PL}, although without explicit constants. 

The following result shows the relationship under restriction to a subset $\mathcal{K}$.
It shows the equivalence under additional assumptions and moreover provides explicit relationships between the corresponding constants.

\begin{assumption}[Proximal-Gradient Invariance of $\mathcal{K}$]
\label{assum:stability of proximal gradient update}
For every $\lambda\in (0,1/L]$,
the proximal gradient update $x\mapsto x-\lambda \mathcal{G}_{\lambda}(x)$ with $\lambda$ maps $\mathcal{K}$ into itself.
\end{assumption}
\begin{assumption}[Closedness of $\mathcal{K}$]
    \label{assum:Kclosed}
    $\mathcal{K}$ is closed.
\end{assumption}

\begin{theorem}[Equivalence of Restricted EB and PL]
\label{thm:equivalence}
Let $\mathcal{K} \subseteq \mathbb{R}^d$.
\begin{itemize}
    \item[(i)] \emph{Restricted EB $\implies$ Restricted PL:} If $F$ satisfies the $\mathcal{K}$-restricted proximal EB with constant $\mu_{\mathcal{K}}$, then it satisfies the $\mathcal{K}$-restricted proximal PL inequality with constant
    \begin{equation*}
        \nu_{\mathcal{K}} = \frac{L}{1 + 4L^2 \mu_{\mathcal{K}}^2}.
    \end{equation*}
    \item[(ii)] \emph{Restricted PL $\implies$ Restricted EB:} Conversely, if $F$ satisfies the $\mathcal{K}$-restricted proximal PL inequality with constant $\nu_{\mathcal{K}}$, and in addition, Assumptions \ref{assum:stability of proximal gradient update} and \ref{assum:Kclosed} hold, then it satisfies the $\mathcal{K}$-restricted proximal EB with constant
    \begin{equation*}
        \mu_{\mathcal{K}} = \frac{1}{L} + \frac{2}{\nu_{\mathcal{K}}}.
    \end{equation*}
\end{itemize}
\end{theorem}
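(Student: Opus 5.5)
For part (i) no convexity of $f$ is needed, so I would argue pointwise at a fixed $x\in\mathcal{K}\cap\dom(F)$. Let $x^+ := x-\frac1L\mathcal{G}_{1/L}(x)$ and let $x^*$ be a projection of $x$ onto $\mathcal{X}^*$. The plan is to split
\[
F(x)-F^*=\bigl[F(x)-F(x^+)\bigr]+\bigl[F(x^+)-F^*\bigr].
\]
The first bracket is at least $\frac{1}{2L}\mathcal{D}_g(x,L)$ by the descent inequality of \citep[Theorem 5]{Karimi2016PL} already used in Theorem~\ref{thm:restricted-convergence}. For the second bracket I will use three facts. First, the upper $L$-smoothness bound at $x^+$. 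Second, $x^+$ minimizes the $L$-strongly convex model $y\mapsto \langle\nabla f(x),y-x\rangle+\frac L2\|y-x\|_2^2+g(y)$, so I compare its value at $x^+$ with its value at $x^*$ and drop the term $-\frac L2\|x^*-x^+\|^2$. Third, the lower smoothness bound $f(x^*)\ge f(x)+\langle\nabla f(x),x^*-x\rangle-\frac L2\|x^*-x\|^2$. Together these give
\[
F(x^+)-F^*\le L\,\dist(x,\mathcal{X}^*)^2\le L\mu_{\mathcal{K}}^2\|\mathcal{G}_{1/L}(x)\|_2^2 ,
\]
using the restricted EB at $x\in\mathcal{K}$. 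Finally, comparing the model value at $y=x$ (which is $0$) with its value at the minimizer, strong convexity gives $\mathcal{D}_g(x,L)\ge L^2\|x-x^+\|^2=\|\mathcal{G}_{1/L}(x)\|_2^2$. Combining the three bounds,
\[
F(x)-F^*\le \Bigl(\tfrac{1}{2L}+L\mu_{\mathcal{K}}^2\Bigr)\mathcal{D}_g(x,L).
\]
This is the restricted PL inequality with constant $L/(1+2L^2\mu_{\mathcal{K}}^2)$, which dominates the claimed $L/(1+4L^2\mu_{\mathcal{K}}^2)$. Any slack in constants (for example, a cruder bound on the model comparison) can be absorbed there.

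For part (ii) I would use a trajectory-length argument. Start the proximal gradient method at $x_0=x\in\mathcal{K}$. By Assumption~\ref{assum:stability of proximal gradient update} every iterate stays in $\mathcal{K}$, so Theorem~\ref{thm:restricted-convergence} applies at every step and $F(x_j)-F^*$ decays linearly. Each step satisfies $\|x_{j+1}-x_j\|^2\le\frac{2}{L}(F(x_j)-F(x_{j+1}))$. I will show the sequence has finite length, hence is Cauchy and converges to some $\bar x$, with $\bar x\in\mathcal{K}$ by Assumption~\ref{assum:Kclosed}. Lower semicontinuity of $F$ and $F(x_j)\to F^*$ then give $\bar x\in\mathcal{X}^*$. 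This yields $\dist(x,\mathcal{X}^*)\le\|x-x_1\|+\sum_{j\ge1}\|x_{j+1}-x_j\|$, and the first term is exactly $\frac1L\|\mathcal{G}_{1/L}(x)\|$, which explains the $\frac1L$ in $\mu_{\mathcal{K}}$.

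To bound the tail in terms of $\|\mathcal{G}_{1/L}(x)\|$ I need $F(x_1)-F^*\lesssim\|\mathcal{G}_{1/L}(x)\|^2$. The optimality condition of the prox step gives
\[
s:=\nabla f(x_1)-\nabla f(x)+\mathcal{G}_{1/L}(x)\in\nabla f(x_1)+\partial g(x_1),\qquad \|s\|\le 2\|\mathcal{G}_{1/L}(x)\|.
\]
Convexity of $g$ gives $\mathcal{D}_g(x_1,L)\le\|s\|^2$, and the restricted PL inequality at $x_1\in\mathcal{K}$ then gives $F(x_1)-F^*\le\frac{2}{\nu_{\mathcal{K}}}\|\mathcal{G}_{1/L}(x)\|^2$. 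For the tail sum I would run the Kurdyka--{\L}ojasiewicz argument with desingularizer $\varphi(s)=\sqrt{s}$ rather than summing the geometric rate crudely, because the crude sum gives the wrong dependence on $\nu_{\mathcal{K}}$ and $L$.

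The KL step works as follows. Concavity of $\varphi$ gives
\[
\varphi(F_j-F^*)-\varphi(F_{j+1}-F^*)\ge\frac{F_j-F_{j+1}}{2\sqrt{F_j-F^*}}.
\]
I bound the numerator below by $\frac L2\|x_{j+1}-x_j\|^2$ and the denominator above by $\sqrt{\|s_j\|^2/(2\nu_{\mathcal{K}})}$, where $s_j$ is the subgradient at $x_j$ from the previous step, with $\|s_j\|\le 2L\|x_j-x_{j-1}\|$. The standard inequality $2\sqrt{ab}\le a+b$ then telescopes $\sum_j\|x_{j+1}-x_j\|$ into a multiple of $\|x_1-x_0\|+\sqrt{(F(x_1)-F^*)/L}$, up to $\nu_{\mathcal{K}}$-dependent factors.

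The main obstacle is this last step. The finite-length conclusion is routine, but matching the exact constant $\frac{1}{L}+\frac{2}{\nu_{\mathcal{K}}}$ requires balancing the Young's-inequality parameter against the factor $2$ in $\|s\|\le2\|\mathcal{G}_{1/L}\|$. I expect to tune that parameter, using $\nu_{\mathcal{K}}\le L$, until the tail contributes at most $\frac{2}{\nu_{\mathcal{K}}}\|\mathcal{G}_{1/L}(x)\|$.
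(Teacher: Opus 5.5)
\textbf{Part (i).} The first bracket is bounded in the wrong direction. To conclude $F(x)-F^*\le(\frac{1}{2L}+L\mu_{\mathcal K}^2)\mathcal{D}_g(x,L)$ you need an \emph{upper} bound on $F(x)-F(x^+)$, but the descent inequality only gives $F(x)-F(x^+)\ge\frac{1}{2L}\mathcal{D}_g(x,L)$.

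The fix is to split at the minimal model value $F_{1/L}(x)=f(x)+\min_y\{\langle\nabla f(x),y-x\rangle+\frac L2\|y-x\|_2^2+g(y)\}$, the forward--backward envelope, instead of at $F(x^+)$. Then $F(x)-F_{1/L}(x)=\frac{1}{2L}\mathcal{D}_g(x,L)$ is an identity. Your second and third facts give $F_{1/L}(x)-F^*\le L\dist(x,\mathcal{X}^*)^2$ directly, and your first fact, which just says $F(x^+)\le F_{1/L}(x)$, becomes unnecessary. This is the paper's proof. Your tighter use of the lower smoothness bound yields $L/(1+2L^2\mu_{\mathcal K}^2)$, whereas the paper's cruder $\frac{3L}{2}$ estimate gives exactly the stated constant.

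If you keep the split at $F(x^+)$, the correct upper bound available is $F(x)-F(x^+)\le\frac{1}{2L}\mathcal{D}_g(x,L)+L\|x-x^+\|_2^2\le\frac{3}{2L}\mathcal{D}_g(x,L)$. That gives $L/(3+2L^2\mu_{\mathcal K}^2)$, which falls below the claimed constant whenever $L\mu_{\mathcal K}<1$.

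\textbf{Part (ii).} You take a genuinely different route from the paper: a discrete KL/path-length argument along step-$1/L$ proximal gradient iterates, instead of the continuous subgradient flow with its flow-invariance lemma. Your ingredients $\|x-x_1\|_2=\frac1L\|\mathcal{G}_{1/L}(x)\|_2$ and $F(x_1)-F^*\le\frac{2}{\nu_{\mathcal K}}\|\mathcal{G}_{1/L}(x)\|_2^2$ are correct. However, the step you flag as the main obstacle cannot be closed by tuning.

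With $d_j:=x_{j+1}-x_j$ and $r_j:=\sqrt{F(x_j)-F^*}$, your estimates give $\|d_j\|_2^2\le 4\sqrt{2/\nu_{\mathcal K}}\,(r_j-r_{j+1})\|d_{j-1}\|_2$. The factor $4$ is the ratio of $2L$ (from $\|s_j\|_2\le 2L\|d_{j-1}\|_2$) to the descent constant $L/2$. Young's inequality with any $t\in(0,1)$ then bounds the tail by $\bigl[\frac{t}{(1-t)L}+\frac{2}{t(1-t)\nu_{\mathcal K}}\bigr]\|\mathcal{G}_{1/L}(x)\|_2$. Since $t(1-t)\le\frac14$, the coefficient of $1/\nu_{\mathcal K}$ is at least $8$, and the condition $\nu_{\mathcal K}\le L$ only affects the other term. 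So your argument gives a restricted EB with constant about $\frac{2}{L}+\frac{8}{\nu_{\mathcal K}}$, not $\frac1L+\frac{2}{\nu_{\mathcal K}}$.

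What is missing is the sharp quadratic growth bound $\dist(z,\mathcal{X}^*)\le\sqrt{2/\nu_{\mathcal K}}\,\sqrt{F(z)-F^*}$ on $\mathcal K$, applied at $z=x^+$. The paper gets it from the subgradient flow, whose length estimate $r(0)-r(T)\ge\sqrt{\nu_{\mathcal K}/2}\int_0^T\|\dot\chi_x(t)\|_2\,dt$ incurs no Young loss.

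Your discrete approach can recover it if you start at $z$ and let the step size $\lambda\downarrow 0$, which the invariance assumption allows. The factor $4$ then becomes $(1+\lambda L)/(1-\lambda L/2)\to 1$. Young with $t=\frac12$ bounds the total path length by $2\|d_0\|_2+\frac{1+\lambda L}{1-\lambda L/2}\sqrt{2/\nu_{\mathcal K}}\,r_0$, and $\|d_0\|_2\to 0$. That version does not even need closedness of $\mathcal K$, since only the iterates, not their limit, must lie in $\mathcal K$.
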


This equivalence result shows that the trajectory-restricted framework applies broadly to problems exhibiting error bound behavior. In particular, for any problem class where global error bounds are known to hold, our results immediately yield corresponding restricted PL guarantees and convergence rates. This includes group and sparse group $\ell_1$-regularization~\citep{Tseng2010GroupReg, Zhang2013SparseGroupReg}, nuclear-norm regularization~\citep{Hou2013Nuclear}, and general nonsmooth composite problems~\citep{Zhou2017ErrorBounds, Drusvyatskiy2018ErrorBounds}.

The proof of \Cref{thm:equivalence} is given in Appendix~\ref{app:equivalence} and requires defining a restricted version of the Kurdyka--{\L}ojasiewicz (KL) inequality \citep{Bolte2007KLSubanalytic,bolte2014proximal} and utilizing the subgradient flow. For the restricted concept, we need to investigate the behavior of the subgradient flow within $\mathcal{K}$, which requires approximation using the proximal gradient sequence, rather than the standard proximal-point approximation~\citep{ambrosio2005gradient,peypouquet2010evi},
and path length estimates \citep[e.g.,][]{Bolte2010characterizations,gupta2021path}.

The additional assumption required in the converse direction (Assumption~\ref{assum:stability of proximal gradient update}) bridges the gap between the discrete proximal-gradient iterates and the continuous subgradient flow~\citep{Bolte2007KLSubanalytic} used in the proof. In the global setting, it suffices to assume the existence of a subgradient flow. In the restricted setting, however, we must additionally ensure that the trajectory remains within $\mathcal{K}$ in order to apply the restricted inequality along the flow. This requires an invariance condition on the proximal-gradient iterates, which is formalized in Assumption~\ref{assum:stability of proximal gradient update}.

The connection to the restricted version of the quadratic growth (QG) condition \citep{anitescu2000degenerate,zhang2017restricted,Drusvyatskiy2018ErrorBounds}, i.e.,~the restricted PL implies the restricted QG, is discussed in Appendix \ref{sec:QG}.

In~\cref{ass:regularity}, we have assumed that $f$ is globally $L$-smooth. In parallel with the restricted regularity conditions introduced above, we may instead consider a localized notion of smoothness.

\begin{definition}[$\mathcal{K}$-Restricted $L$-Smoothness]
\label{def:restricted-smoothness}
The \emph{$\mathcal{K}$-restricted smoothness constant} $L_{\mathcal{K}}$ is the smallest constant such that
    \begin{equation*}
        \|\nabla f(x) - \nabla f(y)\|_2 \le L_{\mathcal{K}} \|x - y\|_2 \quad \text{for all } x,y \in \mathcal{K}.
    \end{equation*}
    In this case, we say that $f$ is \emph{$L_{\mathcal{K}}$-smooth} on $\mathcal{K}$.
\end{definition}

This refinement is particularly useful in settings where the objective is poorly conditioned globally but exhibits much stronger regularity on subsets of interest. If the optimization trajectory $\{x_k\}$ remains within $\mathcal{K}$, we can take larger steps of size $1/L_{\mathcal{K}}$, yielding an improved error contraction factor of $(1 - \nu_{\mathcal{K}} / L_{\mathcal{K}})$. Note that this notion differs from the \emph{restricted strong smoothness} (RSS) condition in the literature, which is typically formulated via descent inequalities rather than gradient Lipschitz continuity~\citep{Damadi2022HardThresholding}. 

\subsection{The Restricted Hoffman Constant}

While the trajectory-restricted framework developed so far applies broadly to any objective satisfying local optimization conditions, its practical value lies in deriving explicitly sharper rates by quantifying local geometry. To ground this abstract theory, we now adapt our framework to the important class of polyhedral composite problems. For these problems, geometric conditioning is naturally characterized by the stability of underlying linear constraint systems, which motivates a localized measure of polyhedral regularity.

We now introduce a localized version of the classical Hoffman constant~\citep{Hoffman1952ApproximateSolutions}, which quantifies the relationship between constraint violation and distance to feasibility.

\begin{definition}[Restricted Hoffman Constant]
\label{def:restricted-hoffman}
Let $\mathcal{X} \subseteq \mathbb{R}^{d}$ be a nonempty polyhedral set 
\begin{equation*}
    \mathcal{X} = \{x \in \mathbb{R}^d \mid Gx = h, \; Mx \le r\}.
\end{equation*}
Given a subset $\mathcal{K} \subseteq \mathbb{R}^d$ and norms $\|\cdot\|_V$ and $\|\cdot\|_W$, the \emph{$\mathcal{K}$-restricted Hoffman constant}, denoted $H_{\mathcal{K}}$, is the smallest constant such that
\begin{equation*}
    \dist(x, \mathcal{X}) :=\min_{x'\in\mathcal{X}}\|x-x'\|_V \;\le\; H^{V, W}_{\mathcal{K}} 
    \left\| 
    \begin{pmatrix} 
    Gx - h \\ 
    {[Mx - r]}_+ 
    \end{pmatrix} 
    \right\|_W
    \quad \text{for all}\; x \in \mathcal{K}.
\end{equation*}
\end{definition}

There are three important observations to note. First, the Hoffman constant $H$ is often studied assuming a Euclidean norm $\ell_2$, in which case $H$ is directly related to the spectral properties of the system of constraints; over the subset $\mathcal{K}$, we denote this simply by $H_{\mathcal{K}}$ for brevity. Second, the definition recovers the classical (global) Hoffman constant when $\mathcal{K} = \mathbb{R}^d$. Third, for linear systems, the Hoffman constant is directly related to the singular values of the defining matrix and is always finite when the system is well-posed (see e.g.,~\citet[Theorem 4.7]{Leventhal2008Randomized} and~\citet[Appendix F]{Karimi2016PL}). The following result illustrates this connection in the simplest setting of linear systems.

\begin{proposition}[Hoffman Constant of Linear Systems; e.g.,~\citep{Leventhal2008Randomized}]
\label{prop:hoffman-singular-value}
    Consider the linear system $\mathcal{X} = \{x \mid Gx = h\}$. The global Hoffman constant $H_{\mathbb{R}^d}$ (in the $\ell_2$-norm) is the reciprocal of the smallest nonzero singular value of $G$:
    \begin{equation*}
        H_{\mathbb{R}^d} = \|G^{\dagger}\|_2 = 1/\sigma_{\min}^+(G) = 1\Big{/}\sqrt{\lambda_{\min}^+ (G^\top G)},
    \end{equation*}
    where $G^{\dagger}$ is the pseudoinverse of $G$ with $\sigma_{\min}^{+}(G)$ the smallest nonzero singular value of $G$, and $G^\top G$ is the Gram matrix with $\lambda_{\min}^{+}(G^{\top}G)$ its smallest nonzero eigenvalue.
\end{proposition}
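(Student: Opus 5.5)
The plan is to compute $\dist(x,\mathcal{X})$ in closed form and compare it with the residual $\|Gx-h\|_2$ through the singular value decomposition of $G$. We use the standing assumption that $\mathcal{X}$ is nonempty, so $h \in \operatorname{range}(G)$. Then $\mathcal{X} = \bar x + \ker G$ with $\bar x := G^\dagger h$, since $GG^\dagger$ is the orthogonal projector onto $\operatorname{range}(G)$ and hence $G\bar x = h$. The Euclidean projection of $x$ onto $\mathcal{X}$ is therefore $x - G^\dagger(Gx - h)$. To verify this, note that $G^\dagger(Gx-h) \in \operatorname{range}(G^\top) = (\ker G)^\perp$, and that $G\bigl(x - G^\dagger(Gx-h)\bigr) = Gx - GG^\dagger(Gx - h) = h$, where we used $Gx - h \in \operatorname{range}(G)$. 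Hence
\begin{equation*}
    \dist(x,\mathcal{X}) = \|G^\dagger(Gx-h)\|_2 .
\end{equation*}

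For the upper bound, we bound this by $\|G^\dagger\|_2\,\|Gx-h\|_2$ and recall from the SVD $G = U\Sigma V^\top$ that $G^\dagger = V\Sigma^\dagger U^\top$. Thus $\|G^\dagger\|_2 = 1/\sigma_{\min}^+(G)$. This establishes that $H_{\mathbb{R}^d} \le \|G^\dagger\|_2$.

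For the matching lower bound, we exhibit a point that attains equality. Let $v$ be a right singular vector of $G$ associated with $\sigma_{\min}^+(G)$, so $v \in (\ker G)^\perp$, and set $x = \bar x + v$. Then $Gx - h = Gv$, with $\|Gv\|_2 = \sigma_{\min}^+\|v\|_2$. Moreover, $\dist(x,\mathcal{X}) = \|v\|_2$, because $v$ is orthogonal to $\ker G$. The ratio $\dist(x,\mathcal{X})/\|Gx-h\|_2$ therefore equals $1/\sigma_{\min}^+(G)$, so no smaller constant can work. The identity $\sigma_{\min}^+(G) = \sqrt{\lambda_{\min}^+(G^\top G)}$ follows directly from the SVD, since the nonzero eigenvalues of $G^\top G$ are the squared nonzero singular values of $G$.

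The only delicate step is the projection formula, and in particular the use of $h \in \operatorname{range}(G)$ (nonemptiness of $\mathcal{X}$) to ensure that $GG^\dagger$ acts as the identity on the residual. Once that formula is in place, the remainder is routine linear algebra. The degenerate case $G = 0$ must be excluded by convention, because then no nonzero singular value exists.
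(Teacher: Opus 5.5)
Your proof is correct. The paper does not prove this proposition itself; it only defers to Leventhal and Lewis (Theorem 4.7) and Karimi et al.\ (Appendix F). So your argument supplies a self-contained derivation rather than following or departing from one in the text.

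The argument has the right structure. The closed-form projection $x - G^\dagger(Gx-h)$ gives
\[
\dist(x,\mathcal{X}) = \|G^\dagger(Gx-h)\|_2 \le \|G^\dagger\|_2\,\|Gx-h\|_2 .
\]
Testing at $x = \bar x + v$, with $v$ a right singular vector for $\sigma_{\min}^+(G)$, shows that this bound is attained. That attainment step is genuinely needed, because the paper defines $H_{\mathbb{R}^d}$ as the \emph{smallest} admissible constant, so an upper bound alone would not give equality.

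You also make explicit two hypotheses the statement leaves implicit. The first is nonemptiness, i.e.\ $h \in \operatorname{range}(G)$, which is exactly what makes $GG^\dagger(Gx-h) = Gx-h$. The second is $G \neq 0$. On this second point, when $G=0$ and $h=0$ the first equality $H_{\mathbb{R}^d} = \|G^\dagger\|_2 = 0$ still holds. Only the reciprocal expressions $1/\sigma_{\min}^+(G)$ and $1/\sqrt{\lambda_{\min}^+(G^\top G)}$ become undefined.
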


This characterization highlights an important distinction between linear systems and composite polyhedral optimization problems. For a purely linear system defined by a sub-Gaussian random matrix $A \in \mathbb{R}^{n \times d}$ with $d \gg n$, the smallest nonzero singular value typically scales as $\sigma_{\min}^+(A) \sim \sqrt{d} - \sqrt{n}$~\citep{Rudelson2010ExtremeSingularValues}. Consequently, the global Hoffman constant for the linear constraint shrinks in high dimensions, yielding $H \sim 1/\sqrt{d}$. In this isolated context, the underlying linear subspace becomes increasingly well-conditioned as the ambient dimension grows.

However, for general polyhedral composite problems, the geometry is governed by a full constraint system that couples the linear map with ambient inequality constraints. To bound the global distance to optimality, the Hoffman constant must account for the worst-case geometric distortion across all of these polyhedral faces. If the constraints are dense, meaning the normal vectors defining the faces span all ambient dimensions, translating their residuals into a standard Euclidean distance incurs a severe dimensional penalty. As we will explicitly demonstrate for sparse regression problems further on in Section~\ref{sec:lasso-svm-applications}, this ambient distortion forces the global Hoffman constant to degrade as $H \sim \sqrt{d}$.

These observations motivate our localized approach: by evaluating the Hoffman constant strictly on the structured subsets actually encountered by the optimization trajectory, we can bypass this ambient distortion entirely. In the following sections, we establish how this restricted constant formally governs local convergence rates, and subsequently apply it to resolve the high-dimensional scaling of these polyhedral regularizers promoting sparsity.

\subsection{Strongly Convex Losses with Linear Structure}

We consider composite problems of the form
\begin{equation*}
    F(x) = f(Ax) + g(x),
\end{equation*}
where $A \in \mathbb{R}^{n \times d}$ is a data or design matrix. Building on our standing assumptions, we further assume here that $f: \mathbb{R}^n \to \mathbb{R}$ is strongly convex, and $g: \mathbb{R}^d \to \mathbb{R} \cup \{+\infty\}$ encodes polyhedral structure. This class includes constrained optimization as well as many standard models in machine learning, such as (regularized) linear regression, logistic regression, 
and support vector machines. The KKT condition in this setting becomes $0 \in A^\top \nabla f(Ax^*) + \partial g(x^*)$.

\paragraph{Polyhedral Constraints.} To develop restricted conditions for this general class, we first examine the case where $g$ represents a hard polyhedral constraint, i.e., an indicator function $g(x) := \delta_{\mathcal{X}}(x)$ over a polyhedral set $\mathcal{X}$. While global PL conditions for this specific constrained setting are known~\citep[Appendix F, Problem 3]{Karimi2016PL}, the corresponding constants depend on global Hoffman bounds and may be highly conservative. We show that, under restriction to a subset $\mathcal{K}$, the PL constant instead depends on a restricted Hoffman constant, leading to significantly sharper rates. The following result makes this connection precise. We note that the condition that $\mathcal{K} \subseteq \mathcal{X}$ is not restrictive in practice; for instance, proximal gradient iterates remain feasible and therefore lie within $\mathcal{X}$.

\begin{proposition}[Restricted PL for Indicator Functions]
\label{prop:indicator-pl}
    Consider the composite problem $F(x) = f(Ax) + \delta_{\mathcal{X}}(x)$, where $f$ is $\alpha$-strongly convex and $\mathcal{X} \subseteq \mathbb{R}^d$ is a nonempty polyhedral set $\mathcal{X}=\{x\in \mathbb{R}^{d} \mid Gx\le h\}$. Let $\mathcal{K} \subseteq \mathcal{X}$ be a subset of the domain $\mathcal{X}$ and $H_{\mathcal{K}}$ be the restricted Hoffman constant such that 
    \begin{equation*}
        \dist(x, \mathcal{X}^*) \le H_{\mathcal{K}} \| A(x - x^*) \|_2
        \quad
        \text{for all $x \in \mathcal{K}$},
    \end{equation*}
    where $x^* = \mathcal{P}_{\mathcal{X}^*}(x)$ is the projection onto the optimal set $\mathcal{X}^* :=\argmin_{x\in\mathcal{X}}f(Ax)$. 
    Let $\nu_{\mathcal{K}} = \alpha/H_{\mathcal{K}}^2$.
    If $\nu_{\mathcal{K}}\le L$, then
    $F$ restricted to $\mathcal{K}$ satisfies the $\mathcal{K}$-restricted proximal PL inequality with constant $\nu_{\mathcal{K}}$.
\end{proposition}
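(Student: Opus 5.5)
Following the argument of \citet[Appendix F]{Karimi2016PL}, with the global Hoffman constant replaced by $H_{\mathcal{K}}$, I would chain three inequalities at a fixed $x\in\mathcal{K}\subseteq\mathcal{X}$. Write $x^*=\mathcal{P}_{\mathcal{X}^*}(x)$ and $h(x):=f(Ax)$; note that $h$ has Lipschitz gradient, and $L$ denotes its smoothness constant as in Assumption~\ref{ass:regularity}.

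\textbf{Step 1: restricted quadratic-growth / strong-convexity inequality.} Strong convexity of $f$ gives
\begin{equation*}
f(Ax^*) \ge f(Ax) + \langle \nabla h(x), x^*-x\rangle + \tfrac{\alpha}{2}\|A(x-x^*)\|_2^2 .
\end{equation*}
The restricted Hoffman bound $\|x-x^*\|_2=\dist(x,\mathcal{X}^*)\le H_{\mathcal{K}}\|A(x-x^*)\|_2$ then yields
\begin{equation*}
F^*\ge F(x)+\langle\nabla h(x),x^*-x\rangle+\tfrac{\nu_{\mathcal{K}}}{2}\|x^*-x\|_2^2,
\end{equation*}
with $\nu_{\mathcal{K}}=\alpha/H_{\mathcal{K}}^2$. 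Here we use that both $x,x^*\in\mathcal{X}$, so $g(x)=g(x^*)=0$.

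\textbf{Step 2: pass to a minimum over $y$.} Since $x^*\in\mathcal{X}$ is feasible for the minimization defining $\mathcal{D}_g(x,\nu_{\mathcal{K}})$, and $g(x^*)-g(x)=0$,
\begin{equation*}
F^*-F(x)\ge \min_{y}\Big[\langle\nabla h(x),y-x\rangle+\tfrac{\nu_{\mathcal{K}}}{2}\|y-x\|_2^2+g(y)-g(x)\Big]=-\tfrac{1}{2\nu_{\mathcal{K}}}\mathcal{D}_g(x,\nu_{\mathcal{K}}).
\end{equation*}
Rearranging gives $\tfrac12\mathcal{D}_g(x,\nu_{\mathcal{K}})\ge \nu_{\mathcal{K}}(F(x)-F^*)$.

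\textbf{Step 3: upgrade the parameter.} The hypothesis $\nu_{\mathcal{K}}\le L$ and the monotonicity of $\mathcal{D}_g(x,\cdot)$ \citep[Appendix E]{Karimi2016PL} give $\mathcal{D}_g(x,L)\ge\mathcal{D}_g(x,\nu_{\mathcal{K}})$. This yields \eqref{eq:restricted-pl} on $\mathcal{K}\cap\dom F=\mathcal{K}$.

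\textbf{Main obstacle.} The delicate point is Step 1. The Hoffman-type inequality is stated only for $x\in\mathcal{K}$, with $x^*$ its projection, so we must use exactly that $x^*$ and nothing else; I would simply apply it pointwise. I would also check that $A x^*$ is the same for every $x^*\in\mathcal{X}^*$, which follows from strong convexity of $f$, so that $f(Ax^*)=F^*$. The monotonicity step is where $\nu_{\mathcal{K}}\le L$ is genuinely needed.
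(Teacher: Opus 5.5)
Your proposal is correct and follows essentially the same route as the paper. Both arguments apply strong convexity of $f$ along $A(x^*-x)$ together with the restricted Hoffman bound, then evaluate the $\mathcal{D}_g(x,\nu_{\mathcal{K}})$ subproblem at $y=x^*$, and finally use monotonicity of $\mathcal{D}_g(x,\cdot)$ with $\nu_{\mathcal{K}}\le L$. The paper additionally checks that the assumed inequality is a genuine restricted Hoffman bound for the polyhedral system $\{y : Ay = Ax^*,\ Gy\le h\}$ describing $\mathcal{X}^*$, which is where uniqueness of $Ax^*$ actually enters; this is not logically needed here, since the proposition takes the inequality as a hypothesis and $f(Ax^*)=F^*$ follows directly from $x^*\in\mathcal{X}^*$.
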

\begin{proof}
    Fix $x \in \mathcal{K}$ arbitrarily. Since $\mathcal{K} \subseteq \mathcal{X}$, $x$ is feasible, so the indicator term vanishes $\delta_{\mathcal{X}}(x) = 0$ and $F(x) = f(Ax)$. Let $x^* = \mathcal{P}_{\mathcal{X}^*}(x)$ be the projection of $x$ onto the optimal set.
    
    Define $\tilde{f}(x) := f(Ax)$ such that $\nabla \tilde f(x) = A^\top \nabla f(Ax)$. By the $\alpha$-strong convexity of $f$ on the range of $A$:
    \begin{equation}
        \label{eq:sc_expansion.primitive}
        \tilde{f}(x^*) \ge \tilde{f}(x) + \langle \nabla \tilde{f}(x), x^* - x \rangle + \frac{\alpha}{2} \| A(x^* - x) \|_2^2.
    \end{equation}
    Since $x^* \in \mathcal{X}^* \subseteq \mathcal{X}$, both points $x,x^{*}$ are feasible. Hence inequality \eqref{eq:sc_expansion.primitive} provides us with an expression:
    \begin{equation}
        \label{eq:sc_expansion}
        F^* \ge F(x) + \langle \nabla \tilde{f}(x), x^* - x \rangle + \frac{\alpha}{2} \| A(x - x^*) \|_2^2 + g(x^*) - g(x).
    \end{equation}
    
    Now, we derive the Hoffman bound for the system. The optimal set $\mathcal{X}^*$ is defined by the intersection of the feasible set and the minimizers of the strongly convex loss. Since $f$ is strongly convex, $z^* :=Ax^*$ is unique. Thus, $\mathcal{X}^*$ is defined by the polyhedral system
    \begin{equation*}
        \mathcal{X}^* = 
        \left\{ y \in \mathbb{R}^d \; \middle| \; A y = z^*, \; G y \le h \right\}.
    \end{equation*}
    To apply the Hoffman bound, we express the equality $Ay = z^*$ as pairs of inequalities, and then the system defining the distance to optimality becomes
    \begin{equation*}
        \mathbf{P}y \le \mathbf{q} \quad \text{where} \quad 
        \mathbf{P} = \begin{pmatrix} A \\ -A \\ G \end{pmatrix}, \quad 
        \mathbf{q} = \begin{pmatrix} z^* \\ -z^* \\ h \end{pmatrix}.
    \end{equation*}
    Since $x \in \mathcal{K} \subseteq \mathcal{X}$, the constraint $Gx \le h$ is satisfied, so $(Gx - h)_+ = 0$. Using the identity $a^2 = (a_+)^2 + (-a_+)^2$, the norm of the residual $[\mathbf{P}x - \mathbf{q}]_+$ simplifies to
    \begin{equation*}
     \|[\mathbf{P}x - \mathbf{q}]_+\|_{2}=
     \| Ax - z^* \|_2 =  \| A(x - x^*) \|_2.
    \end{equation*}
    Hence the $\mathcal{K}$-restricted Hoffman constant $H_{\mathcal{K}}$ yields
    \begin{equation*}
    \|x - x^*\|_2 
    = \dist(x, \mathcal{X}^{*})
    \le H_{\mathcal{K}} \| A(x - x^*) \|_2.
    \end{equation*}
    Squaring and rearranging this yields the bound on the geometry of the map $A$,
    \begin{equation*}
        \label{eq:hoffman_substitution}
        \| A(x - x^*) \|_2^2 \ge \frac{1}{H_{\mathcal{K}}^2} \| x - x^* \|_2^2.
    \end{equation*}
    Substituting into \eqref{eq:sc_expansion} leads to
    \begin{equation}
        \label{eq:hoffman_substituted}
        F^* \ge F(x) + \langle \nabla \tilde{f}(x), x^* - x \rangle + \frac{\alpha}{2 H_{\mathcal{K}}^2} \| x - x^* \|_2^2.
    \end{equation}

    Let $\nu_{\mathcal{K}} = \alpha / H_{\mathcal{K}}^2$. The RHS of \eqref{eq:hoffman_substituted} is bounded by the minimization problem that defines $\mathcal{D}_g$. Specifically, since $x^*$ is a feasible point ($g(x^*) < \infty$), the minimum over all $y$ is bounded above by the value at $y=x^*$:
    \begin{align*}
        \min_y \left[ \langle \nabla \tilde{f}(x), y - x \rangle + \frac{\nu_{\mathcal{K}}}{2} \| y - x \|_2^2 + g(y) - g(x) \right] 
        &\le \langle \nabla \tilde{f}(x), x^* - x \rangle + \frac{\nu_{\mathcal{K}}}{2} \| x^* - x \|_2^2 + \underbrace{g(x^*) - g(x)}_{0} \\
        &= \langle \nabla \tilde{f}(x), x^* - x \rangle + \frac{\alpha}{2 H_{\mathcal{K}}^2} \| x^* - x \|_2^2.
    \end{align*}
    Substituting this upper bound back into \eqref{eq:hoffman_substituted} gives
    \begin{equation*}
        F^* \ge F(x) + \min_y \left[ \langle \nabla \tilde{f}(x), y - x \rangle + \frac{\nu_{\mathcal{K}}}{2} \| y - x \|_2^2 + g(y) - g(x) \right].
    \end{equation*}
    By definition, the minimization term is exactly $-(1/2\nu_{\mathcal{K}}) \mathcal{D}_g(x, \nu_{\mathcal{K}})$. Thus,
    \begin{equation*} 
        F^* \ge F(x) - \frac{1}{2\nu_{\mathcal{K}}} \mathcal{D}_g(x, \nu_{\mathcal{K}}).
    \end{equation*}
    Rearranging terms yields the restricted proximal PL inequality:
    \begin{equation*}
        \frac{1}{2} \mathcal{D}_g(x, \nu_{\mathcal{K}}) \ge \nu_{\mathcal{K}} (F(x) - F^*) \quad \text{with } \nu_{\mathcal{K}} = \frac{\alpha}{H_{\mathcal{K}}^2}.
    \end{equation*}
    Lastly, by the monotonic increasing property of  $\mathcal{D}_{g}(x,\alpha)$ with respect to $\alpha$, we obtain
    \[
    \frac{1}{2}\mathcal{D}_g(x, L) \ge \nu_{\mathcal{K}} (F(x) - F^*) \quad \text{with } \nu_{\mathcal{K}} = \frac{\alpha}{H_{\mathcal{K}}^2},
    \]
    which completes the proof.
\end{proof}

This result shows that the restricted PL constant is directly controlled by the restricted Hoffman constant, with $\nu_{\mathcal{K}} = \alpha / H_{\mathcal{K}}^2$. In particular, improved geometric conditioning of the active region translates immediately into faster convergence rates; see Section~\ref{sec:lasso-svm-applications} for details. 

\paragraph{Polyhedral Convex Regularizers.} We next extend this result to finite-valued polyhedral convex regularizers $g(x)$. 
Consider the composite problem $F(x) = f(Ax) + g(x)$, where $f$ is $\alpha$-strongly convex and $L$-smooth,
and $g(x):\mathbb{R}^d \to\mathbb{R}$ is a finite-valued polyhedral convex function.
Let $\mathcal X^*:=\arg\min_{x\in\mathbb R^d}\{f(Ax)+g(x)\}$
denote the optimal set.

\begin{lemma}
    The optimal set is polyhedral.
\end{lemma}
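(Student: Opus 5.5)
We prove the lemma by mirroring the argument used for Proposition~\ref{prop:indicator-pl}: pin down the image $Ax^*$ by strong convexity, and then describe $\mathcal{X}^*$ by finitely many linear constraints.

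\textbf{Step 1: the image $Ax^*$ is unique.} Suppose $x_1^*, x_2^* \in \mathcal{X}^*$ with $Ax_1^* \neq Ax_2^*$. Consider the midpoint $\bar x = (x_1^*+x_2^*)/2$.
\begin{itemize}
\item Since $f$ is strictly convex, $f(A\bar x) < \tfrac12\bigl(f(Ax_1^*) + f(Ax_2^*)\bigr)$.
\item Since $g$ is convex, $g(\bar x) \le \tfrac12\bigl(g(x_1^*) + g(x_2^*)\bigr)$.
\end{itemize}
Adding these gives $F(\bar x) < F^*$, which is a contradiction. Hence $z^* := Ax^*$ is the same for every $x^* \in \mathcal{X}^*$.

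\textbf{Step 2: the value $g(x^*)$ is unique.} On $\mathcal{X}^*$ we have $F = f(z^*) + g$ and $F = F^*$. Therefore $g(x^*) = F^* - f(z^*) =: c^*$ is constant on $\mathcal{X}^*$.

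\textbf{Step 3: reduction to a sublevel set.} We claim
\begin{equation*}
\mathcal{X}^* = \{x \in \mathbb{R}^d \mid Ax = z^*,\ g(x) \le c^*\}.
\end{equation*}
The inclusion $\subseteq$ follows from Steps 1 and 2. For the reverse inclusion, take any $x$ in the right-hand set. Then $F(x) = f(z^*) + g(x) \le F^*$, so $x$ is optimal.

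\textbf{Step 4: polyhedrality.} Since $g$ is a finite-valued polyhedral convex function, it can be written as $g(x) = \max_{1\le i\le m}(\langle a_i, x\rangle + b_i)$ for finitely many affine pieces. (This is the standard representation: the epigraph of $g$ is a polyhedron, and finiteness removes the domain constraints.) Consequently
\begin{equation*}
\{x \mid g(x) \le c^*\} = \{x \mid \langle a_i,x\rangle \le c^* - b_i,\ i=1,\dots,m\}.
\end{equation*}
This set, together with the linear equalities $Ax = z^*$, gives a system of the form $\{x \mid Gx = h,\ Mx \le r\}$. Hence $\mathcal{X}^*$ is polyhedral, and it is nonempty by Assumption~\ref{ass:regularity}(iii).

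\textbf{Main obstacle.} The only potentially delicate point is justifying the max-of-affine representation of $g$. This follows from Rockafellar's characterization of polyhedral convex functions, and I would cite it rather than reprove it. Steps 1 through 3 are routine.
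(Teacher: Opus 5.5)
Your proof is correct, but it takes a different route from the paper's.

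\emph{Your route.} You characterize $\mathcal{X}^*$ purely through function values. Strict convexity of $f$ fixes $Ax=z^*$ on $\mathcal{X}^*$, so $g\equiv c^*$ there and $\mathcal{X}^*=\{x\mid Ax=z^*\}\cap\{x\mid g(x)\le c^*\}$. A sublevel set of a maximum of affine functions is visibly polyhedral.

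\emph{The paper's route.} The paper goes through first-order optimality. With $\xi^*=\nabla f(y^*)$ it writes $\mathcal{X}^*=\{x\mid Ax=y^*\}\cap M[\xi^*]$, where $M[\xi^*]=\{x\mid -A^\top\xi^*\in\partial g(x)\}$ is the argmin of the tilted function $g+\langle A^\top\xi^*,\cdot\rangle$. It then cites Rockafellar--Wets (Corollary 11.16) to get a description $M[\xi^*]=\{x\mid Bx\le c\}$.

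\emph{What each buys.} Your version is more elementary and self-contained. It needs neither subdifferential calculus nor differentiability of $f$, only strict convexity, and it actually proves the uniqueness of $Ax^*$ that the paper only asserts. The paper's route produces the specific system $Bx\le c$ for $M[\xi^*]$, which the later analysis uses. The restricted firm convexity constant is defined relative to $M[\xi^*]$. The proof of Proposition~\ref{prop:nonsmooth-polyhedral-pl} uses $\|(Bx-c)_+\|_2\le\|B\|\dist(x,M[\xi^*])$. The LASSO section computes this $B$ and gets $\|B\|\le\sqrt2$.

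Hoffman constants depend on the describing system, not only on the set, so your representation could not be swapped in downstream. For the LASSO, $\{g\le c^*\}$ is the cross-polytope $\{\beta\mid\langle\sigma,\beta\rangle\le\|\hat\beta\|_1,\ \sigma\in\{\pm1\}^d\}$. Its $2^d\times d$ constraint matrix has operator norm $2^{d/2}$, whereas $M[\hat\xi]$ is cut out by coordinate constraints $\pm e_i^\top\beta\le 0$. So for the lemma as stated your argument is the cleaner one, while the paper's proof is also setting up the particular polyhedral system it needs later.
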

\begin{proof}
Since $g$ is polyhedral, there exist
vectors $a_1,\dots,a_m\in\mathbb R^d$ and scalars $b_1,\dots,b_m\in\mathbb R$ such that
\[
g(x)=\max_{1\le i\le m}\{a_i^\top x+b_i\}
\qquad
\text{for all }x\in\mathbb R^d.
\]
Since $f$ is strongly convex, the image $Ax$ is constant over $\mathcal X^*$. Therefore, there exists a unique vector $y^*$ such that
$Ax=y^*$ for all $x\in\mathcal X^*$.
Define $\xi^*:=\nabla f(y^*)$ and fix any $x^*\in\mathcal X^*$. By the first-order optimality condition, we have
\[
0\in A^\top \nabla f(Ax^*)+\partial g(x^*)
=
A^\top \xi^*+\partial g(x^*).
\]
Since the condition $-A^{\top}\xi^{*} \in \partial g(x)$ is equivalent to
\[
x \in M[\xi^{*}]:=\argmin_{u}\{ g(u) + \langle A^{\top}\xi^{*}, u \rangle \},
\] 
there exists a matrix $B$ and a vector $c$ \citep[Corollary 11.16]{Rockafellar1998Variational} such that
\begin{align}
\label{eq: defining B and c in polyhedron}
M[\xi^{*}]=\{x\in\mathbb R^d \mid -A^\top\xi^*\in\partial g(x)\}
=
\{x\in\mathbb R^d\mid Bx\le c\}.
\end{align}
Therefore, we obtain the polyhedral representation
$\mathcal X^* = \{x\in\mathbb R^d \mid Ax=y^*,\ Bx\le c\}.$ Writing the equality $Ax=y^*$ as two inequalities, we obtain $\mathcal X^* = \{x\in\mathbb R^d:\bar Mx\le \bar q\},$
where
\[
\bar M:=
\begin{pmatrix}
A\\
-A\\
B
\end{pmatrix},
\qquad
\bar q:=
\begin{pmatrix}
y^*\\
-y^*\\
c
\end{pmatrix}.
\]
This completes the proof.
\end{proof}

Having established that the optimal set $\mathcal X^*$ is polyhedral, we can now 
invoke Hoffman-type bounds to control the distance to optimality. In line with 
our trajectory-restricted framework, we consider such bounds restricted to a set 
$\mathcal{K} \subseteq \mathbb{R}^d$ that contains $\mathcal{X}^*$ and captures 
the region explored by the optimization algorithm.

Let $H_{\mathcal{K}}$ denote the restricted Hoffman constant associated with the 
polyhedral system defining $\mathcal{X}^*$, i.e.,
\[
\dist(x,\mathcal{X}^{*}) \le H_{\mathcal{K}}\{ \|Ax - y^{*}\|_{2} + \|(Bx-c)_{+}\|_{2} \}
\quad \text{for all $x\in\mathcal{K}$}.
\]

In contrast to the indicator-function setting considered earlier, the presence of 
a finite-valued polyhedral regularizer $g$ requires an additional curvature-type 
condition to relate function values to distance from the optimal set. To this end, 
we introduce a restricted notion of firm convexity, following~\citet{Drusvyatskiy2018ErrorBounds}.


\begin{definition}[Restricted Firm Convexity]
\label{def: restricted firm convexity constant}
For $\mathcal{K}$, the restricted firm convexity constant of \(g\) relative to \(-A^\top \xi^*\) is the largest constant \(\gamma_{\mathcal{K}}>0\) such that
\[
g(x)+\langle A^\top \xi^*,x\rangle
-
\inf_{u\in M[\xi^*]}
\bigl(g(u)+\langle A^\top \xi^*,u\rangle\bigr)
\ge
\frac{\gamma_{\mathcal{K}}}{2}\,
\dist\bigl(x,M[\xi^*]\bigr)^2
\qquad
\text{for all $x\in\mathcal{K}$}.
\]
\end{definition}


The following result makes the connection between the restricted geometric constants 
and error bound guarantees for composite problems with polyhedral convex regularizers.

\begin{proposition}[Restricted PL for Polyhedral Nonsmooth Functions]
\label{prop:nonsmooth-polyhedral-pl}
    Denote by $H_{\mathcal{K}}$ and $\gamma_{\mathcal{K}}$ the restricted Hoffman and firm convexity constants respectively.
    Let $\|B\|$ be the operator norm of $B$ defined in \eqref{eq: defining B and c in polyhedron}.
    Assume that the proximal gradient update from a point in $\mathcal{K}$ remains in $\mathcal{K}$.
    Then $F$ satisfies the restricted EB inequality with a constant $L^{-1}
+8\alpha^{-1} H_{\mathcal{K}}^{2}\bigl(1+\|B\|\gamma^{-1}_{\mathcal{K}}L\bigr)^{2}
+8 H_{\mathcal{K}} \|B\| \gamma^{-1}_{\mathcal{K}}$.
\end{proposition}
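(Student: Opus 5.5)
Fix $x\in\mathcal{K}$ and set $x^+ := \prox_{g/L}(x - L^{-1}\nabla\tilde f(x))$, where $\tilde f(x) = f(Ax)$, so that $x - x^+ = L^{-1}\mathcal{G}_{1/L}(x)$. By assumption $x^+\in\mathcal{K}$. The plan is to bound $\dist(x^+,\mathcal{X}^*)$ and then write
\[
\dist(x,\mathcal{X}^*) \le \|x-x^+\|_2 + \dist(x^+,\mathcal{X}^*) = L^{-1}\|\mathcal{G}_{1/L}(x)\|_2 + \dist(x^+,\mathcal{X}^*).
\]
This produces the leading term $L^{-1}$ of the constant. The remaining work is to show that $\dist(x^+,\mathcal{X}^*) \lesssim \|\mathcal{G}_{1/L}(x)\|_2$, with the two remaining summands.

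\textbf{Step 1 (residual at $x^+$).} The prox optimality condition gives
\[
v := \mathcal{G}_{1/L}(x) - \nabla\tilde f(x) \in \partial g(x^+).
\]
Hence $w := \nabla\tilde f(x^+) + v \in \partial F(x^+)$. Since $\|\nabla\tilde f(x^+) - \nabla\tilde f(x)\|\le L\|x^+ - x\|$, we get $\|w\|\le 2\|\mathcal{G}_{1/L}(x)\|$.

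\textbf{Step 2 (Hoffman bound).} Apply the restricted Hoffman bound at $x^+\in\mathcal{K}$:
\[
\dist(x^+,\mathcal{X}^*) \le H_{\mathcal{K}}\bigl(\|Ax^+-y^*\| + \|(Bx^+-c)_+\|\bigr).
\]
Each term is then controlled separately.

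\emph{The $B$-term.} Since $M[\xi^*] = \{Bx\le c\}$, we have $\|(Bx^+-c)_+\| \le \|B\|\,\dist(x^+,M[\xi^*])$. Restricted firm convexity bounds the latter by
\[
\dist(x^+,M[\xi^*]) \le \sqrt{2\gamma_{\mathcal{K}}^{-1}\bigl(\phi(x^+) - \phi^*\bigr)}, \qquad \phi := g + \langle A^\top\xi^*,\cdot\rangle .
\]
Next, $\phi$ is convex and $v + A^\top\xi^* \in \partial\phi(x^+)$. So for $u = \mathcal{P}_{M[\xi^*]}(x^+)$,
\[
\phi(x^+) - \phi^* \le \langle v + A^\top\xi^*, x^+ - u\rangle \le \|v + A^\top\xi^*\|\,\dist(x^+,M[\xi^*]).
\]
Combining gives
\[
\dist(x^+,M[\xi^*]) \le 2\gamma_{\mathcal{K}}^{-1}\|v + A^\top\xi^*\|.
\]
Now write $v + A^\top\xi^* = w - A^\top(\nabla f(Ax^+) - \xi^*)$. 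Then
\[
\|v + A^\top\xi^*\| \le \|w\| + \|A^\top(\nabla f(Ax^+) - \nabla f(y^*))\| \le 2\|\mathcal{G}\| + L\|Ax^+ - y^*\|_{\ast},
\]
where $L$ absorbs $\|A\|$ through the smoothness of $\tilde f$. Up to that bookkeeping, this gives
\[
\|(Bx^+-c)_+\| \le 2\|B\|\gamma_{\mathcal{K}}^{-1}\bigl(2\|\mathcal{G}\| + L\|Ax^+-y^*\|\bigr).
\]
This explains the factor $(1 + \|B\|\gamma_{\mathcal{K}}^{-1}L)$ multiplying the $A$-residual, and the term $8H_{\mathcal{K}}\|B\|\gamma_{\mathcal{K}}^{-1}$.

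\emph{The $A$-term.} Take $x^* = \mathcal{P}_{\mathcal{X}^*}(x^+)$. Strong convexity of $f$ and monotonicity of $\partial g$ (using $-A^\top\xi^* \in \partial g(x^*)$ and $v\in\partial g(x^+)$) give
\[
\alpha\|Ax^+ - y^*\|^2 \le \langle w, x^+ - x^*\rangle \le \|w\|\,\dist(x^+,\mathcal{X}^*).
\]
Thus $\|Ax^+-y^*\| \le \sqrt{2\|\mathcal{G}\|\,\dist(x^+,\mathcal{X}^*)/\alpha}$.

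\textbf{Step 3 (solving the self-referential inequality).} Substituting both bounds gives, with $D := \dist(x^+,\mathcal{X}^*)$ and $\mathcal{G} := \|\mathcal{G}_{1/L}(x)\|$,
\[
D \le H_{\mathcal{K}}\bigl(1 + 2\|B\|\gamma_{\mathcal{K}}^{-1}L\bigr)\sqrt{2\mathcal{G}D/\alpha} + 4H_{\mathcal{K}}\|B\|\gamma_{\mathcal{K}}^{-1}\mathcal{G}.
\]
Apply Young's inequality $ab \le D/2 + b^2/2$ with $a = \sqrt{D}$ to the square-root term, then absorb $D/2$ into the left side. This yields
\[
D \le 8\alpha^{-1}H_{\mathcal{K}}^2\bigl(1+\|B\|\gamma_{\mathcal{K}}^{-1}L\bigr)^2\mathcal{G} + 8H_{\mathcal{K}}\|B\|\gamma_{\mathcal{K}}^{-1}\mathcal{G},
\]
matching the stated constant up to the numerical factors, which I would tune during this step.

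\textbf{Main obstacle.} I expect the hardest part to be Step 2: controlling the $B$-residual through firm convexity. Two difficulties arise there. First, the inequality is quadratic and must be linearized via the subgradient inequality for $\phi$; this needs $x^+\in\mathcal{K}$, which is exactly why the invariance hypothesis is imposed. Second, the cross term in $\nabla f(Ax^+) - \xi^*$ must be routed back into the $A$-residual, with the operator-norm factors absorbed consistently into $L$.
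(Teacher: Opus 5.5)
Your proof is correct and follows essentially the paper's route: firm convexity $\Rightarrow$ metric subregularity, the restricted Hoffman bound, the strong-convexity/monotonicity estimate on $\|Ax-y^*\|_2$, the residual bound $\dist(0,\partial F(x^+))\le 2\|\mathcal{G}_{1/L}(x)\|_2$, and the final triangle inequality. The only difference is that the paper first proves $\dist(x,\mathcal{X}^*)\le\kappa_{\mathcal{K}}\dist(0,\partial F(x))$ on all of $\mathcal{K}$ by solving a quadratic in $\|Ax-y^*\|_2$ and applies it at $x^+$, whereas you work directly at $x^+$ and need no tuning: your Young step gives $D\le\bigl(2\alpha^{-1}H_{\mathcal{K}}^2(1+2\|B\|\gamma_{\mathcal{K}}^{-1}L)^2+8H_{\mathcal{K}}\|B\|\gamma_{\mathcal{K}}^{-1}\bigr)\|\mathcal{G}_{1/L}(x)\|_2$, and $(1+2s)^2\le 4(1+s)^2$ for $s\ge 0$ yields exactly the stated constant.
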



The proof is given in Appendix~\ref{app:nonsmooth-polyhedral-g:rev}. The key takeaway is that the error bound, and hence the convergence rate via the equivalence results of Section~\ref{sec:restricted-eb}, is governed by two geometric quantities: the restricted Hoffman constant $H_{\mathcal{K}}$, which captures the stability of the polyhedral optimal set, and the restricted firm convexity constant $\gamma_{\mathcal{K}}$, which quantifies the local curvature of the regularizer.

In particular, when the trajectory enters a well-conditioned region $\mathcal{K}$, both constants can improve significantly compared to their global counterparts, leading to faster convergence rates. This shows that for polyhedral composite problems, the effective conditioning of the optimization problem is determined by the local geometry of the system, as encoded by these restricted constants.

\subsection{A Conditioning Perspective on Convergence}
\label{subsec:conditioning-perspective}

We now reinterpret the preceding results through the lens of conditioning, linking the restricted geometric constants to classical quantities such as Lipschitz constants and condition numbers that govern convergence rates in practice.

The global variant of the linear convergence guarantees established in \cref{thm:restricted-convergence} assumes the form 
\begin{equation*}
    F(x_{k+1}) - F^* \le \left(1 - \frac{\nu}{L}\right)(F(x_k) - F^*),
\end{equation*}
where $L$ is the global smoothness constant of $f$ and $\nu$ is the proximal PL constant. In contrast, the restricted geometric variant established in \cref{thm:restricted-convergence} yields a localized convergence bound:
\begin{equation*}
    F(x_{k+1}) - F^* \le \left(1 - \frac{\nu_{\mathcal{K}}}{L_{\mathcal{K}}}\right)(F(x_k) - F^*),
\end{equation*}
where $L_{\mathcal{K}}$ and $\nu_{\mathcal{K}}$ are the restricted versions of the smoothness constant and proximal PL constant respectively. The ratio of these quantities naturally defines the effective \emph{condition numbers} for the global and restricted settings:
\begin{equation*}
    \kappa := \frac{L}{\nu}, \quad \text{and} \quad \kappa_{\mathcal{K}} := \frac{L_{\mathcal{K}}}{\nu_{\mathcal{K}}}.
\end{equation*}
These condition numbers govern the speed of convergence: a smaller condition number yields a faster linear rate.

The constant $L$ arises from the assumption that the gradient of $f$ is Lipschitz continuous:
\begin{equation*}
    \|\nabla f(x) - \nabla f(y)\|_2 \le L \|x - y\|_2.
\end{equation*}
Intuitively, $L$ measures the curvature of the objective: it controls how quickly the gradient can change and therefore limits how large a step an optimization algorithm can practically take. For quadratic losses of the form
\begin{equation*}
    f(x) = \|Ax - y\|_2^2 / 2,
\end{equation*}
the smoothness constant is given by
\begin{equation*}
    L = \lambda_{\max}(A^\top A) = \sigma_{\max}^2(A),
\end{equation*}
where $A^\top A$ is the \emph{Gram matrix}. Thus, $L$ is determined by the largest singular value of the data matrix $A$. As is well known in random matrix theory, for an $n \times d$ sub-Gaussian random design matrix with $d \gg n$, this global spectral penalty scales as $L \sim (\sqrt{d} + \sqrt{n})^2 \approx d$~\citep{Rudelson2010ExtremeSingularValues}.

On the other hand, the PL constant $\nu_{\mathcal{K}}$ is controlled by the geometry of the constraint structure. In the polyhedral setting, it satisfies
\begin{equation*}
    \nu_{\mathcal{K}} \;\propto\; \frac{1}{H_{\mathcal{K}}^2},
\end{equation*}
where $H_{\mathcal{K}}$ is the (restricted) Hoffman constant. This links convergence directly to the conditioning of the underlying polyhedral system.

Combining these observations offers a unifying framework where the effective condition number governing convergence can be expressed as
\begin{equation*}
    \kappa_{\mathcal{K}} \;\propto\; L_{\mathcal{K}} \cdot H_{\mathcal{K}}^2.
\end{equation*}
This decomposition highlights two complementary sources of difficulty: (i) Spectral geometry (via $L_{\mathcal{K}}$): determined by the data matrix $A$ and its singular values; and (ii) Polyhedral geometry (via $H_{\mathcal{K}}$): determined by the structure of the constraints or regularizer. In particular, $L_{\mathcal{K}}$ reflects the largest eigenvalue of the (restricted) Gram matrix, while $H_{\mathcal{K}}$ is inversely related to its smallest eigenvalue, so that together they recover the standard condition number governing convergence.

In high-dimensional problems, both quantities may be poorly behaved globally. However, when the optimization trajectory is restricted to a structured subset $\mathcal{K}$, both $L_{\mathcal{K}}$ and $H_{\mathcal{K}}$ can improve dramatically. This yields significantly better local conditioning and explains the fast convergence observed in practice. This phenomenon appears in the following two canonical settings that we study next.

This conditioning perspective suggests that improving convergence is fundamentally a problem of improving geometry, either by reducing curvature (through $L_{\mathcal{K}}$) or exploiting structure of the polyhedral geometry (through $H_{\mathcal{K}}$).

\section{Case Studies and Extensions}
\label{sec:lasso-svm-applications}

We illustrate the trajectory-restricted framework through two canonical models: the LASSO and support vector machines (SVM). Both fit the composite form $F(x) = f(Ax) + g(x)$ with polyhedral $g$, so their convergence behavior is governed by restricted Hoffman constants as we have previously established in~\cref{prop:nonsmooth-polyhedral-pl}. 

These examples highlight a central phenomenon: although global conditioning may be poor in high-dimensional settings, optimization algorithms typically traverse much smaller, structured regions of the space where the effective geometry is significantly more favorable. This viewpoint not only explains the observed two-phase convergence behavior, but also motivates broader extensions of the framework to a larger class of algorithms.

\subsection{Sparse Regression: LASSO}

Consider the $\ell_1$-regularized least squares problem, which is known as LASSO~\citep{Tibshirani1996Lasso}:
\begin{equation*}
    \min_{\beta \in \mathbb{R}^d} F(\beta):=\frac{1}{2}\|A \beta - y\|_2^2 + \eta \|\beta\|_1,
\end{equation*}
where $A \in \mathbb{R}^{n \times d}$ is the design matrix, $\beta \in \mathbb{R}^d$ are the regression coefficients, $y \in \mathbb{R}^n$ is the target vector, and $\eta$ is the regularization parameter. This model promotes sparse solutions and is particularly relevant in high-dimensional settings ($d \gg n$), where the ordinary least squares OLS estimator is ill-posed. The corresponding proximal method is the iterative shrinkage-thresholding algorithm (ISTA).

We now consider the application of Proposition \ref{prop:nonsmooth-polyhedral-pl} to the LASSO problem. We first provide a polyhedral representation of $M[\hat{\xi}]$,
then bound the coefficient matrix $B$ in \eqref{eq: defining B and c in polyhedron}
and the firm convexity constant $\gamma_{\mathcal{K}}$.
Let $\hat{\beta}$ be an arbitrary LASSO solution and define
\[
\hat{y}:=A\hat{\beta},
\qquad
\hat{\xi}:=\nabla f(\hat{y})=\hat{y}-y,
\qquad
\hat{s}:=-A^\top \hat{\xi}=A^\top (y-A\hat{\beta}).
\]
The first-order optimality condition for LASSO is
$0\in A^\top(A\hat{\beta}-y)+\eta\,\partial\|\hat{\beta}\|_1$,
and hence we have, for each $i=1,\dots,d$,
\[
\hat{s}_i=
\begin{cases}
\eta \cdot \mathrm{sign}(\hat{\beta}_i), & \hat{\beta}_i\neq 0,\\[1mm]
\in[-\eta,\eta], & \hat{\beta}_i=0.
\end{cases}
\]
We then introduce the index sets separating the sign of $\hat{s}_{i}$:
\[
I_0:=\{i\mid\ |\hat{s}_i|<\eta\},
\qquad
I_+:=\{i\mid\ \hat{s}_i=\eta\},
\qquad
I_-:=\{i\mid\ \hat{s}_i=-\eta\}.
\]
With this notation, the set $M[\hat{\xi}]$ defined in \eqref{eq: defining B and c in polyhedron},
$M[\hat{\xi}]:=
\{\beta\in\mathbb{R}^d\mid \ -A^\top \hat{\xi}\in \eta\,\partial\|\beta\|_1\}$, admits the explicit form
\[
M[\hat{\xi}]
=
\Bigl\{
\beta\in\mathbb{R}^d \mid 
\beta_i=0 \ \text{for } i\in I_0,\ \
\beta_i\ge 0 \ \text{for } i\in I_+,\ \
\beta_i\le 0 \ \text{for } i\in I_-
\Bigr\}.
\]
Accordingly, using the standard basis vectors $\{e_i\}$, we can represent the polyhedral system as
\[
M[\hat{\xi}]=\{\beta\in\mathbb R^d\mid\ B\beta\le 0\}
\quad\text{with}\quad
B=
\begin{pmatrix}
(e_i^\top)_{i\in I_0}\\
(-e_i^\top)_{i\in I_0}\\
(-e_i^\top)_{i\in I_+}\\
(e_i^\top)_{i\in I_-}
\end{pmatrix}.
\]
Hence, we have $\|B\|\le \sqrt{2}$.

To obtain an explicit bound on the restricted firm convexity constant 
(Definition~\ref{def: restricted firm convexity constant}), we introduce 
a standard nondegeneracy measure.

\begin{definition}[Strict Complementarity Margin]
\label{def:strict-complementarity}
The \emph{strict complementarity margin} is defined as
\[
\delta_* := \min_{i \in I_0} (\eta - |\hat{s}_i|),
\]
which quantifies the separation between active and inactive coordinates at the solution.
\end{definition}

To control the constants globally along the optimization trajectory, we restrict attention 
to a bounded region. Using the coercive lower bound $F(\beta) \ge \eta \|\beta\|_1$, 
we define the level set
\[
\Theta := \left\{ \beta \in \mathbb{R}^d \;\middle|\; \|\beta\|_{\infty} \le \frac{F(\beta_0)}{\eta} \right\},
\]
which contains all iterates initialized at $\beta_0$.


For any $\mathcal{K} \subseteq \Theta$, the restricted firm convexity constant admits the bound
\[
\frac{2}{\gamma_{\mathcal K}}
\le
\max\left\{
\frac{F(\beta^0)}{\eta\,\delta_*},\,
\frac{F(\beta^0)}{2\eta^2}
\right\}.
\]
The proof is given in Appendix~\ref{app:Restricted metric subregulairy for LASSO}.


This bound shows that $\gamma_{\mathcal K}$ is controlled by the strict complementarity margin 
and the initial level set. Consequently, combining this with Proposition~\ref{prop:nonsmooth-polyhedral-pl}, 
we see that the convergence rate is governed by the restricted Hoffman constant $H_{\mathcal K}$, 
up to problem-dependent constants that remain bounded along the trajectory.

For simplicity, going forwards, we assume that $\mathcal{K} \subseteq \Theta$ and omit explicit dependence on $\Theta$.



\paragraph{Active Sets and Local Geometry.} Define the active set $\mathcal{A}(\beta) = \{j : \beta_j \neq 0\}$ and let $\hat{\mathcal{A}} = \operatorname{supp}(\hat{\beta})$ denote the support of a minimizer $\hat{\beta}$. Intuitively, this is the sparse dictionary of features used for the regression.

This combinatorial structure induces a natural decomposition of the space and determines the local geometry of the problem. As anticipated in Section~\ref{subsec:conditioning-perspective}, the global geometric conditioning of the LASSO is severely penalized by the ambient dimension. The polyhedral constraints of the LASSO arise from the $\ell_1$ penalty, which geometrically corresponds to an ambient cross-polytope between the inactive and active features. The normal vectors defining the faces of this cross-polytope are dense vectors of the form $[\pm 1, \dots, \pm 1]^\top \in \mathbb{R}^d$. Notably, the standard Euclidean ($\ell_2$) length of these normal vectors is exactly $\sqrt{d}$. Because the global Hoffman constant must translate KKT residuals associated with these dense faces into an ambient $\ell_2$ distance, it fundamentally incurs this worst-case geometric distortion. Consequently, the global Hoffman constant for the LASSO inherently degrades as $H \sim \sqrt{d}$, as observed in \cref{fig:hoffman-scaling}.

\begin{figure}[ht]
    \centering
    \includegraphics[width=0.5\linewidth]{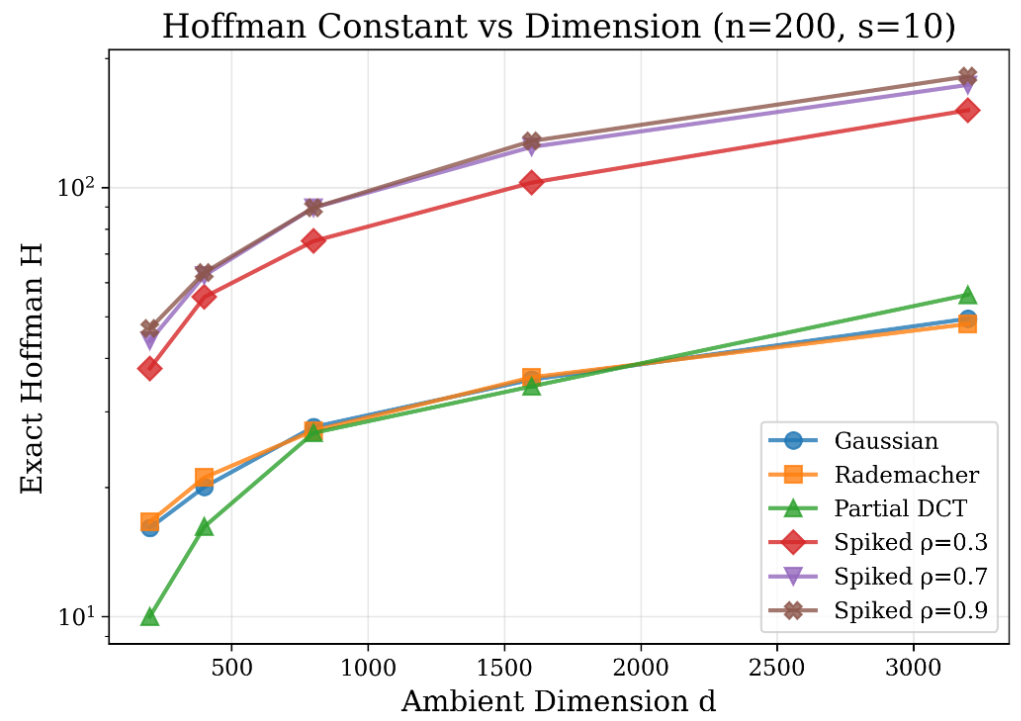}
    \caption[Scaling of the Hoffman constant with ambient dimension.]{\textbf{Scaling of the Hoffman constant with ambient dimension ($n = 200, s = 10$).} The exact Hoffman constant $H$ of the LASSO is computed for a fixed, correctly identified active set across varying ambient dimensions $d$. We observe that the overall Hoffman constant increases at a rate of roughly $\mathcal{O}(\sqrt{d})$ for all ensembles. Highly correlated designs (spiked models) exhibit strictly worse geometric conditioning than standard independent ensembles (Gaussian, Rademacher).}
    \label{fig:hoffman-scaling}
\end{figure}

A key advantage of our trajectory-restricted framework is that the subset $\mathcal{K}$ can be chosen to reflect the active structure and bypass this ambient distortion. We highlight three canonical choices:

\begin{enumerate}
    \item \textbf{Global, $\mathcal{K} = \mathbb{R}^d$:} This corresponds to the standard analysis~\citep[Section 3.2.1]{Bolte2017ErrorBounds}.
    The global Hoffman constant $H_{\mathbb{R}^d}$ bounds the worst-case geometry over the entire domain and thus scales as $H_{\mathbb{R}^d} \sim \sqrt{d}$. 
    
    \item \textbf{Local, $\mathcal{K} = \mathcal{S}_{\hat{\mathcal{A}}}$:} Let $\hat{\mathcal{A}} = \operatorname{supp}(\hat{\beta})$ be the optimal active set and $\mathcal{S}_{\hat{\mathcal{A}}}:=\{\beta \in \mathbb{R}^{d}\mid \operatorname{supp}(\beta)=\hat{\mathcal{A}}\}$. Restricting the geometry to this optimal support yields a restricted Hoffman constant $H_{\hat{\mathcal{A}}}$ that depends only on the active submatrix. Because the effective dimension of this face is $s = |\hat{\mathcal{A}}|$, the dense $\sqrt{d}$ penalty is bypassed entirely since $A_{\hat{\mathcal{A}}} \in \mathbb{R}^{n \times s}$. 
    Under standard assumptions on $y$ and $A$ \citep[Corollary 7.22]{Wainwright2019HighDimensional} and under the assumption that the true (noiseless) regression coefficient $\beta^{*}$ is $s$-sparse, 
    $\hat{\mathcal{A}}$ is contained within $\operatorname{supp}(\beta^{*})$
    with a high probability;
    hence this restricted constant $H_{\hat{\mathcal{A}}}$ scales only with $\sqrt{n}$ and is independent of the ambient dimension $d$; see also ~\citep{Zhao2006ModelSelectionConsistency}.
    
    \item \textbf{Trajectory-Dependent, $\mathcal{K} = \cup_k \mathcal{A}_k$:} During optimization, the active set evolves and since the algorithm visits a trajectory of active sets $\mathcal{T} = \{\mathcal{A}_0, \mathcal{A}_1, \dots\}$, the convergence at iteration $k$ is governed by the geometry of the current subproblem; if the algorithm is initialized well, then this may mean that the trajectory is well-conditioned throughout. This leads to a path-dependent rate: the convergence rate at iteration $k$ is governed by the geometry of the current subproblem and the overall path-dependent rate is controlled by
    \begin{equation*}
        H_{\mathcal{T}} = \max_{\mathcal{A} \in \mathcal{T}} H_{\mathcal{A}}.
    \end{equation*}
\end{enumerate}

\paragraph{Two-Phase Convergence.} This viewpoint explains the characteristic two-phase behavior of proximal methods:

\begin{itemize}
    \item \textbf{Phase 1: Manifold Identification (Search Phase).} During early iterations, the active set evolves. Convergence is governed by trajectory-dependent constants and may be slow. Algorithms such as ISTA possess the \emph{finite manifold identification property}~\citep{Sun2019ManifoldIdentification, Nutini2019ActiveSet} under the  strict complementarity condition: $\delta_{*}>0$. During the early transient phase ($k < T$), the convergence rate is bounded by the trajectory Hoffman constant, which may be quite large, although for ISTA, the active set tends to get smaller during this phase, so convergence may gain in speed.
    
    \item \textbf{Phase 2: Fast Active Set (Local Linear Convergence).} Once the correct support is identified (when $k \ge T$),  the algorithm enters the local phase and the iterates remain in the optimal subspace $\mathcal{S}_{\hat{\mathcal{A}}}$. The convergence rate is then governed by the local Hoffman constant, specifically, $\nu_{\text{local}} \propto 1/{H_{\hat{\mathcal{A}}}^2}$, which yields significantly faster linear convergence. This provides a theoretical justification for the fast tail often seen in convergence plots.
\end{itemize}

\paragraph{Quantitative Speed-Up and Restricted Smoothness.} To quantify the acceleration achieved during the local linear convergence phase, we compare the effective condition numbers $\kappa \propto L \cdot H^2$ in the global and restricted settings. 

The global smoothness constant scales as $L \sim d$. When combined with the ambient geometric penalty of the $\ell_1$ cross-polytope ($H_{\mathbb{R}^d} \sim \sqrt{d}$), the global condition number suffers a compounded dimensional penalty, scaling catastrophically as:
\begin{equation*}
    \kappa \propto L \cdot H_{\mathbb{R}^d}^2 \sim d \cdot (\sqrt{d})^2 = d^2.
\end{equation*}

However, once the iterates identify and restrict themselves to the optimal active subspace $\mathcal{S}_{\hat{\mathcal{A}}}$, both penalties are simultaneously neutralized:
\begin{itemize}
    \item \textbf{Geometric Improvement ($H_{\mathcal{K}}$):} Bypassing the dense faces of the ambient $\ell_1$ cross-polytope substantially reduces the Hoffman constant. The restricted constant scales as $H_{\hat{\mathcal{A}}} \sim \sqrt{n}$. This improves the squared geometric penalty from $d$ to $n$, yielding a purely geometric conditioning improvement proportional to $n/d$.
    
    \item \textbf{Spectral Improvement ($L_{\mathcal{K}}$):} The smoothness constant adapts strictly to the active submatrix. The global spectral penalty $L = \sigma_{\max}^2(A) \sim d$ reduces to the restricted smoothness $L_{\mathcal{K}} = \sigma_{\max}^2(A_{\hat{\mathcal{A}}})$. Because the active set size $s = |\hat{\mathcal{A}}|$ is much smaller than the ambient dimension $d$, the spectrum of the submatrix is highly concentrated. By similar singular value scaling arguments on the submatrix, $L_{\mathcal{K}} \sim n$, providing an additional $n/d$ spectral improvement.
\end{itemize}

Combining these two effects reveals the quantitative advantage of local active-set convergence. The effective local condition number scales strictly with the intrinsic problem size: 
\begin{equation*}
    \kappa_{\mathcal{K}} \propto L_{\mathcal{K}} \cdot H_{\hat{\mathcal{A}}}^2 \sim n \cdot (\sqrt{n})^2 = n^2.
\end{equation*}
Comparing this to the global scaling ($\kappa \sim d^2$), the effective condition number improves by a massive factor of $(n/d)^2$. This is significant in high-dimensional settings where $n \ll d$ and circumvents the ambient dimensional penalty.

\paragraph{Yet Another Local Geometric Structure.} The active-set perspective captures only part of the geometric structure explored by the optimization trajectory. A natural question is whether it is possible to describe this trajectory more precisely using continuous geometric constraints rather than discrete supports. For a broad class of decomposable $M$-estimators,~\citet[Lemmas 1 and 3]{Agarwal2012FastGlobal} show that iterates remain confined to structured sets around the optimal solution. These results suggest that the trajectory is governed by finer geometric sets than the active subspace alone, providing an opportunity for sharper, geometry-aware bounds. We now specialize their result to the restricted cone in the LASSO setting. 

\begin{lemma}[Iterated Cone Bound for LASSO]
\label{lem:iterated-cone-bound}
Let $\hat{\beta}$ be an optimal solution to the LASSO problem with support $\hat{\mathcal{A}}$. Let $\beta \in \mathbb{R}^d$ be any feasible vector satisfying the objective suboptimality bound $F(\beta) \le F(\hat{\beta}) + \delta$, where $\delta \ge 0$ is a chosen tolerance parameter. Assuming the regularization parameter $\eta$ satisfies the standard dual norm condition $\eta \ge 2 \|\nabla f(\beta^*)\|_\infty$ (where $\beta^*$ is the true underlying parameter), the error vector $\Delta := \beta - \hat{\beta}$ satisfies
\begin{equation*}
    \|\Delta_{\hat{\mathcal{A}}^c}\|_1 \le 3 \|\Delta_{\hat{\mathcal{A}}}\|_1 + 4 \|\hat{\beta}_{\hat{\mathcal{A}}^c}\|_1 + \frac{2\delta}{\eta}.
\end{equation*}
In the standard case where the target $\hat{\beta}$ is exactly supported on $\hat{\mathcal{A}}$ (i.e., $\hat{\beta}_{\hat{\mathcal{A}}^c} = 0$), this simplifies to the affine cone condition:
\begin{equation*}
    \|\Delta_{\hat{\mathcal{A}}^c}\|_1 \le 3 \|\Delta_{\hat{\mathcal{A}}}\|_1 + \frac{2\delta}{\eta}.
\end{equation*}
\end{lemma}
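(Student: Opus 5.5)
The plan is to follow the decomposability argument of \citet{Agarwal2012FastGlobal}: combine the suboptimality hypothesis $F(\beta) \le F(\hat{\beta}) + \delta$ with convexity of the loss $f(\beta) = \frac{1}{2}\|A\beta - y\|_2^2$, then split the $\ell_1$ norm along $\hat{\mathcal{A}}$ and $\hat{\mathcal{A}}^c$. The constants $3$, $4$ and $2\delta/\eta$ should fall out after multiplying through by $2/\eta$.

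\textbf{Step 1 (lower bound on the loss difference).} Convexity of $f$ gives
\[
f(\beta) - f(\hat{\beta}) \ge \langle \nabla f(\hat{\beta}), \Delta \rangle .
\]
By H\"older's inequality,
\[
\langle \nabla f(\hat{\beta}), \Delta \rangle \ge -\|\nabla f(\hat{\beta})\|_\infty \|\Delta\|_1 .
\]
I then want $\|\nabla f(\hat{\beta})\|_\infty \le \eta/2$. This is where the dual norm condition $\eta \ge 2\|\nabla f(\beta^*)\|_\infty$ enters: following Agarwal et al., the reference point in the expansion plays the role of $\beta^*$. So I would either take the expansion at the point that satisfies the dual norm condition, or read the lemma with $\hat{\beta}$ as the target satisfying it. This yields
\[
f(\beta) - f(\hat{\beta}) \ge -\tfrac{\eta}{2}\|\Delta\|_1 .
\]

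\textbf{Step 2 (combine with the suboptimality bound).} Plugging Step 1 into $F(\beta) \le F(\hat{\beta}) + \delta$ gives
\[
\eta\|\beta\|_1 - \eta\|\hat{\beta}\|_1 \le \delta + \tfrac{\eta}{2}\bigl(\|\Delta_{\hat{\mathcal{A}}}\|_1 + \|\Delta_{\hat{\mathcal{A}}^c}\|_1\bigr).
\]

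\textbf{Step 3 (decomposability of the $\ell_1$ norm).} Write $\beta = \hat{\beta} + \Delta$ and split over $\hat{\mathcal{A}}$ and its complement. The triangle inequality on each block gives
\[
\|\beta\|_1 \ge \|\hat{\beta}_{\hat{\mathcal{A}}}\|_1 - \|\Delta_{\hat{\mathcal{A}}}\|_1 + \|\Delta_{\hat{\mathcal{A}}^c}\|_1 - \|\hat{\beta}_{\hat{\mathcal{A}}^c}\|_1 ,
\]
while $\|\hat{\beta}\|_1 = \|\hat{\beta}_{\hat{\mathcal{A}}}\|_1 + \|\hat{\beta}_{\hat{\mathcal{A}}^c}\|_1$. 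Substituting these into Step 2, the $\hat{\beta}_{\hat{\mathcal{A}}}$ terms cancel and we obtain
\[
\eta\bigl(\|\Delta_{\hat{\mathcal{A}}^c}\|_1 - \|\Delta_{\hat{\mathcal{A}}}\|_1 - 2\|\hat{\beta}_{\hat{\mathcal{A}}^c}\|_1\bigr) \le \delta + \tfrac{\eta}{2}\bigl(\|\Delta_{\hat{\mathcal{A}}}\|_1 + \|\Delta_{\hat{\mathcal{A}}^c}\|_1\bigr).
\]

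\textbf{Step 4 (rearrange).} Move the $\tfrac{\eta}{2}\|\Delta_{\hat{\mathcal{A}}^c}\|_1$ term to the left and multiply by $2/\eta$. This gives exactly
\[
\|\Delta_{\hat{\mathcal{A}}^c}\|_1 \le 3\|\Delta_{\hat{\mathcal{A}}}\|_1 + 4\|\hat{\beta}_{\hat{\mathcal{A}}^c}\|_1 + \frac{2\delta}{\eta}.
\]
The special case is immediate: set $\hat{\beta}_{\hat{\mathcal{A}}^c} = 0$.

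\textbf{Main obstacle.} Steps 2--4 are routine; the delicate step is Step 1. The KKT conditions only give $\|\nabla f(\hat{\beta})\|_\infty = \|\hat{s}\|_\infty \le \eta$, and using this bound directly would yield constants $0$ on the left and hence no cone. The factor $1/2$ must therefore come from the dual norm condition at $\beta^*$. I would first try to carry out the expansion around $\beta^*$ with support set $\operatorname{supp}(\beta^*) \supseteq \hat{\mathcal{A}}$, as in \citet{Agarwal2012FastGlobal}. Failing that, I would state explicitly that $\hat{\beta}$ is identified with the target satisfying $\|\nabla f(\hat{\beta})\|_\infty \le \eta/2$.
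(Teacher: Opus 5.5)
\textbf{There is a genuine gap at Step 1, and it cannot be closed, because the statement as written is false.} The paper gives no separate proof of this lemma. It presents it as a specialization of Lemmas 1 and 3 of Agarwal et al., and your Steps 2--4 are exactly the decomposability argument behind their Lemma 1.

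With $\Delta=\beta-\hat\beta$, Step 1 needs $\|\nabla f(\hat\beta)\|_\infty\le\eta/2$. For a LASSO minimizer with $\hat{\mathcal{A}}\neq\emptyset$, the KKT conditions give $\|\nabla f(\hat\beta)\|_\infty=\|\hat s\|_\infty=\eta$ exactly. So your fallback (b) is vacuous for an optimal $\hat\beta$ (it only covers $\hat\beta=0$). If instead $\hat\beta$ is not a minimizer, you are proving a different lemma. The hypothesis at $\beta^*$ gives no control on $\nabla f(\hat\beta)$.

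Here is a counterexample to the statement. Take $d=n=2$, $A=I_2$, $\eta=1$, $y=(2,\,0.9)^\top$ and $\beta^*=y$. Then $\nabla f(\beta^*)=0$, so the dual-norm hypothesis holds trivially. Soft-thresholding gives $\hat\beta=(1,0)^\top$, $\hat{\mathcal{A}}=\{1\}$ and $\hat s=(1,\,0.9)^\top$. For $\beta=(1,\,0.1)^\top$ we get $F(\beta)-F(\hat\beta)=1.92-1.905=0.015=:\delta$ and $\Delta=(0,\,0.1)^\top$. The claimed bound then reads $0.1\le 0+0+0.03$, which fails.

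Expanding exactly at $\hat\beta$, the term that controls $\Delta_{\hat{\mathcal{A}}^c}$ is $\sum_{i\notin\hat{\mathcal{A}}}(\eta-|\hat s_i|)\,|\beta_i|\le\delta$. That control comes from the strict complementarity margin, not from $2\delta/\eta$. Note also that $\hat\beta_{\hat{\mathcal{A}}^c}=0$ by the definition of $\hat{\mathcal{A}}$, so the ``general'' and ``special'' cases coincide. The statement is Agarwal et al.'s Lemma 1 with $\theta^*$ replaced by $\hat\beta$, and that replacement is exactly where it breaks.

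What your argument does prove is fallback (a). Center at $\beta^*$, set $\Delta^*:=\beta-\beta^*$, take any index set $S$ (e.g.\ $S=\operatorname{supp}(\beta^*)$), and use $F(\beta)\le F(\hat\beta)+\delta\le F(\beta^*)+\delta$. Steps 1--4 then go through verbatim and give $\|\Delta^*_{S^c}\|_1\le 3\|\Delta^*_S\|_1+4\|\beta^*_{S^c}\|_1+2\delta/\eta$.

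That is a different lemma. To pass from it to $\beta-\hat\beta$ you must apply it to $\hat\beta$ as well (with $\delta=0$) and use the triangle inequality, as in Agarwal et al.'s Lemma 3. This leaves additive terms in $\hat\beta-\beta^*$, such as $\sqrt{s}\,\|\hat\beta-\beta^*\|_2$ and $\|\beta^*_{S^c}\|_1$, rather than a clean cone around $\hat\beta$.

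So neither fallback proves the statement as written. The lemma has to be restated in one of three ways: centered at $\beta^*$; with the Lemma 3 error terms; or, under strict complementarity ($I_0=\hat{\mathcal{A}}^c$), as $\|\Delta_{\hat{\mathcal{A}}^c}\|_1\le\delta/\delta_*$.
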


Note that the standard dual norm condition on $\eta$ is satisfied with high probability for the choice $\eta \gtrsim \sqrt{\frac{\log d}{n}}$ under standard sub-Gaussian noise assumptions \citep[e.g.,][]{Wainwright2019HighDimensional, Negahban2009UnifiedMEstimators}. 

\begin{figure}[ht]
    \centering
    \includegraphics[width=0.95\linewidth]{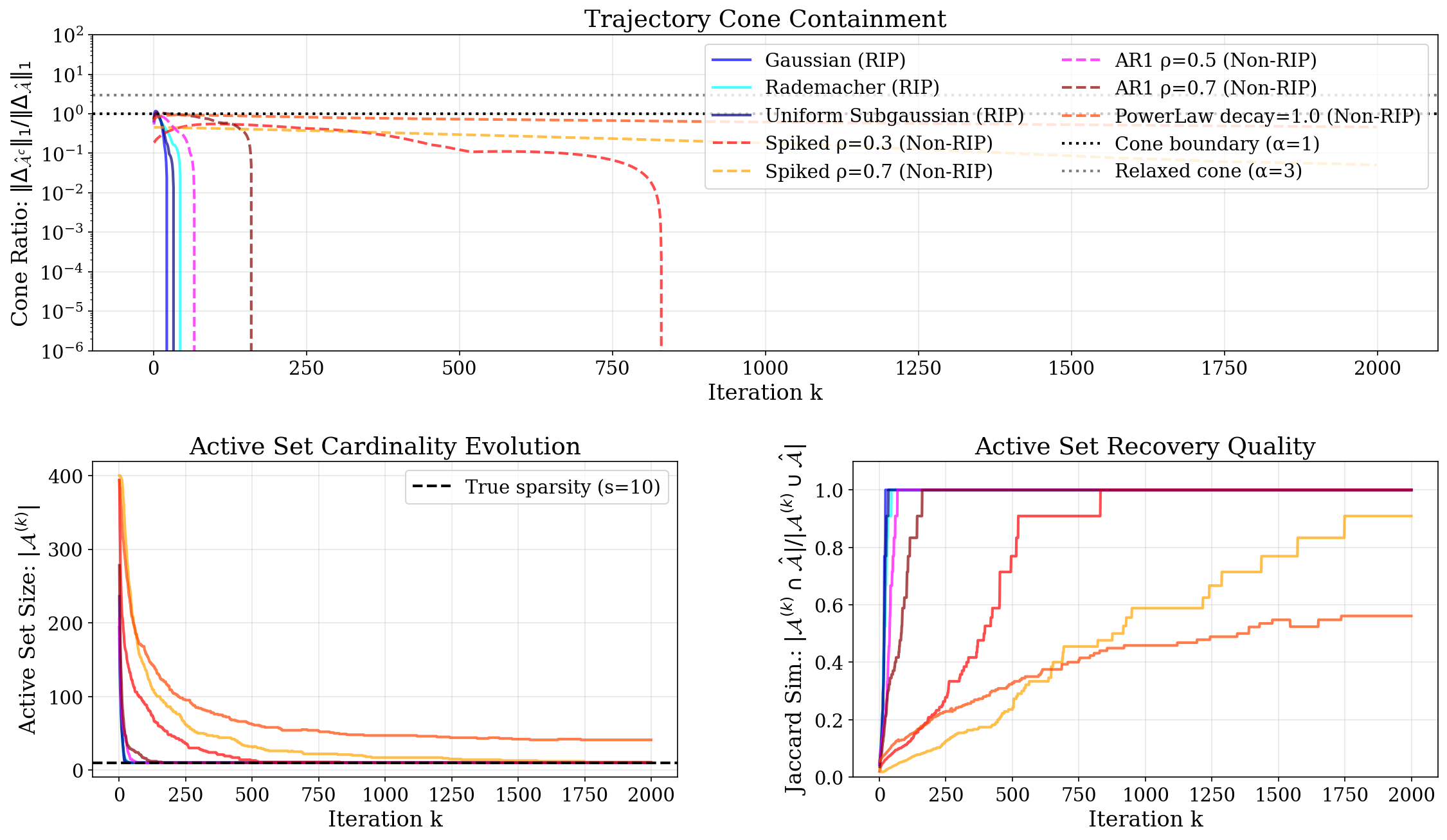}
    \caption[Trajectory containment of ISTA iterates for the normalized LASSO.]{\textbf{Trajectory containment of ISTA iterates for various random matrices ($n=200, d=400, s=10$) in the normalized LASSO problem.} Matrices which satisfy RIP are marked. \textbf{(Top)} The active cone ratio ($\|\Delta_{\hat{\mathcal{A}}^c} \|_1 / \|\Delta_{\hat{\mathcal{A}}} \|_1$) over iterations. We observe that trajectories tend to enter $\mathcal{C}_1(\hat{\mathcal{A}})$ relatively quickly. \textbf{(Bottom left)} Size of the current active set over iterations; tends to decrease. \textbf{(Bottom right)} Jaccard similarity between the current active set and the optimal active set over iterations. A similarity of 1 indicates the manifold has been identified.}
    \label{fig:trajectory-containment}
\end{figure}

The LASSO iterated cone bound lemma implies that the trajectory of any descent algorithm (such as ISTA) is eventually contained within a restricted cone around the optimal support. That is, the trajectory asymptotically enters the exact cone $\mathcal{C}_3(\hat{\mathcal{A}}) = \{ \Delta : \|\Delta_{\hat{\mathcal{A}}^c}\|_1 \le 3 \|\Delta_{\hat{\mathcal{A}}}\|_1 \}$. In fact, in~\cref{fig:trajectory-containment}, we empirically observe that ISTA trajectories remain within $\mathcal{C}_1(\hat{\mathcal{A}})$.

The empirical results in~\cref{fig:trajectory-containment} reveal three key geometric phenomena that corroborate our theoretical framework. First, the size of the active set $|\mathcal{A}_k|$ tends to decrease monotonically during the transient phase, suggesting that the local conditioning improves continuously throughout the trajectory rather than just asymptotically or after manifold identification. Second, manifold identification (where Jaccard similarity reaches $1.0$) coincides with the sharp drop in the cone ratio $\|\Delta_{\mathcal{A}^c}\|_1 / \|\Delta_{\mathcal{A}}\|_1$, validating the distinct transition from the search phase to the fast local phase. Finally, we observe a separation in convergence rates between designs satisfying the RIP and those that do not (e.g., spiked covariance with high correlation $\rho$)~\citep{Wainwright2019HighDimensional}; RIP designs enter the restricted cone $\mathcal{C}_1(\hat{\mathcal{A}})$ and converge soon after. The cone-restricted Hoffman constant may be uniformly bounded for such designs, which would explain the faster convergence rates observed in the compressed sensing literature~\citep[e.g.,][]{Xiao2013ProximalHomotopy}.

\paragraph{Normalized vs.\ Unnormalized Formulations.} We now raise a subtle but important distinction between the unnormalized objective
\begin{equation*}
    f(\beta) = \|A\beta - y\|_2^2 /2
\end{equation*}
and the normalized empirical risk
\begin{equation*}
    f(\beta) = \|A\beta - y\|_2^2 / (2n).
\end{equation*}
In the normalized formulation that is standard in statistical settings, the relevant matrix becomes the scaled Gram matrix $A^\top A / n$ and the key quantities are the normalized singular values.

Under mild assumptions (e.g., sub-Gaussian ensemble design)~\citep{Rudelson2010ExtremeSingularValues}, the largest singular value of the active submatrix $A_{\hat{\mathcal{A}}}$ scales as $\mathcal{O}(\sqrt{n})$, implying that the normalized constants
\begin{equation*}
    L_{\mathcal{K}} = \sigma_{\max}^2(A_{\hat{\mathcal{A}}}) / n
\end{equation*}
and
\begin{equation*}
    \nu_{\mathcal{K}} \propto 
1/(\sigma_{\min}^+(A_{\hat{\mathcal{A}}})^2 / n)
\end{equation*}
are both of order $\mathcal{O}(1)$. Consequently, the local condition number becomes dimension-free:
\begin{equation*}
    \kappa_{\mathcal{K}} = \mathcal{O}(1).
\end{equation*}

Under the normalized objective $f(\beta) = \|A\beta - y\|_2^2 / (2n)$, the relevant quantities are governed by the scaled Gram matrix $A^\top A / n$. In this setting, once the iterates are restricted to the active subspace $\mathcal{K} = \mathcal{S}_{\hat{\mathcal{A}}}$, both the smoothness and geometric constants become dimension-free:
\begin{equation*}
    L_{\mathcal{K}} = \frac{1}{n}\sigma_{\max}^2(A_{\hat{\mathcal{A}}})
\quad \text{and} \quad
H_{\mathcal{K}} \sim \left( \frac{1}{n}\sigma_{\min}^+(A_{\hat{\mathcal{A}}})^2 \right)^{-1},
\end{equation*}
which are $\mathcal{O}(1)$ under standard assumptions. Consequently, the effective local condition number satisfies
\begin{equation*}
    \kappa_{\mathcal{K}} = L_{\mathcal{K}} H_{\mathcal{K}}^2 = \mathcal{O}(1),
\end{equation*}
showing that the optimization landscape becomes scale-free after support identification. This suggests that the apparent ill-conditioning of high-dimensional problems is largely a global artifact and disappears along the optimization trajectory. In the case of high-dimensional sparse regression ($s \ll n \ll d$), the effective condition number transitions from being dependent on the full ambient dimension to depending only on the intrinsic problem size.\\

\subsection{Margin Classification: Support Vector Machines}

The framework extends naturally beyond LASSO to other problems with polyhedral structure. We first consider the standard primal formulation of the linear SVM with hinge loss and regularization parameter $\eta$:
\begin{equation*}
    \min_{\beta \in \mathbb{R}^d} \frac{\eta}{2} \|\beta\|_2^2 + \sum_{i=1}^n \max(0, 1 - y_i x_i^\top \beta).
\end{equation*}

While global linear convergence rates for SVMs have been established~\citep{Tseng2009BlockCoordinateSVM, Wang2014IterationComplexitySVM}, they typically depend on the smallest nonzero singular value of the full data matrix, which can be vanishingly small in high dimensions~\citep[Section 4.4]{Karimi2016PL}. Furthermore, this primal formulation embeds the data matrix inside the nonsmooth hinge loss.

In high-dimensional or kernelized settings, it is standard algorithmic practice to instead optimize the dual formulation via some coordinate method \citep{hsieh2008dual}. Conveniently, transitioning to the dual also perfectly aligns the SVM with our composite structure $F(x) = f(Ax) + g(x)$ and explicitly isolates the active set geometry. The dual SVM over the variables $\alpha \in \mathbb{R}^n$ is given by
\begin{equation*}
    \min_{\alpha \in \mathbb{R}^n} \underbrace{\frac{1}{2} \alpha^\top Q \alpha - \mathbf{1}^\top \alpha}_{f(\alpha)} + \underbrace{\delta_{\mathcal{X}}(\alpha)}_{g(\alpha)},
\end{equation*}
where $Q$ is the Gram matrix with entries $Q_{ij} = y_i y_j x_i^\top x_j$, and $\delta_{\mathcal{X}}$ is the indicator function for the polyhedral feasible set $\mathcal{X} = \{\alpha \mid y^\top \alpha = 0, \; 0 \le \alpha \le C\}$ (with $C = 1/\eta$). This convex formulation satisfies the restricted quadratic growth condition, which is sufficient for the restricted PL condition in the case of convex functions (see Appendix \ref{sec:QG}).

In this context, our restricted analysis focuses on the active set of the dual variables. The active set $\mathcal{A}(\alpha)$ corresponds to the non-zero dual variables ($\alpha_i > 0$), which represent exactly the \emph{support vectors} (the data points lying on or violating the margin).

The representer theorem~\citep{Schoelkopf2001Representer} shows that the minimizer of a regularized empirical risk over a reproducing kernel Hilbert space (RKHS) lies in the finite-dimensional span of the kernel functions evaluated at the training data. As a result, even when working in an infinite-dimensional feature space (e.g., with radial basis function kernels), the optimization problem reduces to estimating a finite set of coefficients $\alpha$. 

However, relying solely on the representer theorem or global quadratic growth is insufficient to explain the empirical fast convergence of SVM, as the global condition number of the resulting $n \times n$ Gram matrix often degrades rapidly with the sample size $n$ or the kernel width. Our restricted framework provides the missing link: once the optimization algorithm identifies the optimal active manifold corresponding to the set of true support vectors $\mathcal{S}^*$, the convergence rate is again governed strictly by the local Hoffman constant $H_{\mathcal{S}^*}$. This constant is determined by the geometric separation of the classes restricted to the subspace of support vectors. Consequently, the asymptotic linear rate depends on the spectral properties of the $s \times s$ submatrix of the kernel (where $s = |\mathcal{S}^*|$), effectively decoupling the convergence speed from both the infinite ambient dimension and the total sample size. To conclude, our framework provides a geometric explanation for an empirically observed phenomenon: kernelizing a problem does not necessarily degrade optimization speed when the solution remains sparse.

\subsection{Extensions to General Geometric Settings}

Beyond the polyhedral setting, the logic of restriction naturally extends to other structured problems with identifiable manifolds, such as nuclear-norm regularization where the local geometry of the low-rank manifold dictates the asymptotic rate once the structure is recovered. We refer to~\citet{Zhou2017ErrorBounds} for a discussion of local error bounds in such non-polyhedral cases.

Although we focused on proximal gradient methods and the PL inequality for brevity, our framework for restricted optimization conditions is general. Many proofs in the existing literature can be adapted to restricted versions almost immediately. For instance, the remaining conditions mentioned in~\citet[Theorem 2]{Karimi2016PL} can be naturally restricted to subsets $\mathcal{K}$. Similarly, results other problems and optimizers can be easily transposed into restricted variants relying on localized geometric constants; a non-exhaustive list includes: gradient descent, stochastic gradient descent (SGD), coordinate descent (randomized, proximal, greedy), and various boosting methods~\citep{Nutini2015Coordinate, Karimi2016PL, Meir2003Boosting}. 

While our analysis primarily focuses on Euclidean ($\ell_2$) trajectory-restricted regularity to characterize the linear convergence of proximal gradient methods, this framework generalizes immediately to arbitrary $\ell_p$ norms. By pairing the primal space measured in an $\ell_p$ norm with the dual space measured in the conjugate $\ell_q$ norm (where $1/p + 1/q = 1$), we can define an $\ell_p$-generalized gradient size:
\begin{equation*}
    \mathcal{D}_{g, p}(x, L_p) := -2 L_p \min_y \left[ \langle \nabla f(x), y - x \rangle + \frac{L_p}{2} \|y - x\|_p^2 + g(y) - g(x) \right].
\end{equation*}
Using this generalized gradient size, we define the restricted $\ell_p$-PL inequality for a subset $\mathcal{K}$ as
\begin{equation*}
    \frac{1}{2} \mathcal{D}_{g, p}(x, L_p) \ge \nu_{\mathcal{K}, p} (F(x) - F^*), \quad \forall\ x \in \mathcal{K}.
\end{equation*}
This generalization is applicable to different algorithmic solvers. For instance, coordinate descent algorithms update one feature at a time, moving strictly along canonical axes. Consequently, the steepest descent direction is measured in the $\ell_\infty$ dual norm, and the algorithm's geometric progress is governed by the $\ell_1$ primal geometry. Under our polyhedral framework, the corresponding $\ell_1$-PL constant~\citep{Nutini2015Coordinate} is tied to the restricted $\ell_1$-Hoffman constant $H_{\mathcal{K}}^{1, 1}$ of the active KKT system. Because bounding polyhedral systems in $\ell_1$ and $\ell_\infty$ norms suffers from severe dimensionality penalties compared to the $\ell_2$ norm~\citep[Exercises 7.5 and 7.14]{Wainwright2019HighDimensional}, this generalized $\ell_p$ framework provides a geometric explanation for why coordinate descent can stall on highly correlated high-dimensional designs, even when full-gradient methods maintain a stable $\ell_2$ rate.

Broadly, our results suggest a meta-theorem for optimization analysis: if it is possible to prove a geometric containment result in a well-conditioned section of the space, then tighter, geometry-aware convergence bounds follow. This perspective not only aids in designing new algorithms but also explains the remarkable effectiveness of existing methods where worst-case intuition is misaligned with empirical results. 

\section{Discussion}
\label{sec:discussion}

In this work, we have proposed a geometry-aware framework for analysing the linear convergence of proximal gradient methods. By shifting focus from global constants (which bound worst-case behavior) to restricted constants (which characterize the geometry of the active set), we bridge the significant gap between theoretical pessimism and empirical performance. Optimization algorithms do not explore the entire ambient space; they traverse specific, often favorable, low-dimensional or restricted paths. Our work formalizes this intuition, proving that if an algorithm can identify a well-conditioned subset, it inherits the fast convergence rate associated with that lower-dimensional subset, regardless of the ambient dimension.  

We conclude with several avenues for future research:
\begin{itemize}
    \item \textbf{Adaptive Manifold Identification:} A practical implication of our theory is the potential for geometry-informed adaptive algorithms. If an algorithm can detect when it has entered a favorable $\mathcal{K}$ (e.g., via stable active set patterns), it could switch to a more aggressive step-size (i.e., $1/L_{\mathcal{K}}$) or a second-order method tailored specifically to that lower-dimensional subspace.
    
    \item \textbf{Local Preconditioning:} The local Hoffman constant suggests that the ideal preconditioner changes as the active set evolves. Designing efficient preconditioners that estimate the local geometry without the prohibitive cost of full SVD computations remains an open challenge~\citep{Pena2024HoffmanHomogeneous}.
    
    \item \textbf{Probabilistic Trajectory Analysis:} While we have discussed deterministic containment, certain algorithms like SGD are unlikely to have such results. By combining high-dimensional concentration of measure with optimization dynamics, one might hope to prove that trajectories remain in favorable regions with high probability for random landscapes and algorithms.
    
    \item \textbf{Deep Nonconvexity:} Finally, there is a compelling analogy to be made in deep learning. Overparameterized neural networks are known to traverse low-dimensional manifolds during training~\citep{Mao2024LowDimManifold}. Characterizing the local geometry of these neural manifolds could provide new insights into why simple gradient methods succeed in such highly nonconvex, high-dimensional regimes.
\end{itemize}

\section*{Software Availability} Code to reproduce all figures and numerical experiments in this paper is provided in the following \href{https://github.com/farischaudhry/explicit-hoffman-constants}{GitHub repository}. The code to generate Figures~\ref{fig:hoffman-scaling} and~\ref{fig:trajectory-containment} is labeled experiment 4 and 3 respectively.

\section*{Acknowledgments}  A.M.~is supported by the EPSRC AI Hub on Mathematical Foundations of Intelligence: An ``Erlangen Programme'' for AI [EP/Y028872/1]. 
K.Y.~is supported by JSPS KAKENHI (24K15120, 24H00247, 26K02871).

\bibliographystyle{plainnat}
\bibliography{bibliography} 

@book{renegar2001mathematical,
  title={A mathematical view of interior-point methods in convex optimization},
  author={Renegar, James},
  year={2001},
  publisher={SIAM}
}

@article{todd1990dantzig,
  title={{A Dantzig-Wolfe-like variant of Karmarkar's interior-point linear programming algorithm}},
  author={Todd, Michael},
  journal={Operations Research},
  volume={38},
  number={6},
  pages={1006--1018},
  year={1990},
  publisher={INFORMS}
}

@InProceedings{Liao2024PMLR,
  title = 	 {Error bounds, {PL} condition, and quadratic growth for weakly convex functions, and linear convergences of proximal point methods},
  author =       {Liao, Feng-Yi and Ding, Lijun and Zheng, Yang},
  booktitle = 	 {Proceedings of the 6th Annual Learning for Dynamics and Control Conference},
  pages = 	 {993--1005},
  year = 	 {2024},
  editor = 	 {Abate, Alessandro and Cannon, Mark and Margellos, Kostas and Papachristodoulou, Antonis},
  volume = 	 {242},
  series = 	 {Proceedings of Machine Learning Research},
  month = 	 {15--17 Jul},
  publisher =    {PMLR}
}

@article{drusvyatskiy2021nonsmooth,
  title={{Nonsmooth optimization using Taylor-like models: error bounds, convergence, and termination criteria}},
  author={Drusvyatskiy, Dmitriy and Ioffe, Alexander and Lewis, Adrian},
  journal={Mathematical Programming},
  volume={185},
  number={1},
  pages={357--383},
  year={2021},
  publisher={Springer}
}

@article{stewart1989scaled,
  title={On scaled projections and pseudoinverses},
  author={Stewart, Gilbert},
  journal={Linear Algebra and its Applications},
  volume={112},
  pages={189--193},
  year={1989},
  publisher={Elsevier}
}

@article{davis2025active,
  title={Active manifolds, stratifications, and convergence to local minima in nonsmooth optimization},
  author={Davis, Damek and Drusvyatskiy, Dmitriy and Jiang, Liwei},
  journal={Foundations of Computational Mathematics},
  pages={1--83},
  year={2025},
  publisher={Springer}
}

@article{zhang2017restricted,
  title={The restricted strong convexity revisited: analysis of equivalence to error bound and quadratic growth},
  author={Zhang, Hui},
  journal={Optimization Letters},
  volume={11},
  number={4},
  pages={817--833},
  year={2017},
  publisher={Springer}
}

@article{anitescu2000degenerate,
  title={Degenerate nonlinear programming with a quadratic growth condition},
  author={Anitescu, Mihai},
  journal={SIAM Journal on Optimization},
  volume={10},
  number={4},
  pages={1116--1135},
  year={2000},
  publisher={SIAM}
}

@article{freund1999some,
  title={Some characterizations and properties of the ``distance to ill-posedness'' and the condition measure of a conic linear system},
  author={Freund, Robert and Vera, Jorge},
  journal={Mathematical Programming},
  volume={86},
  number={2},
  pages={225--260},
  year={1999},
  publisher={Springer}
}

@article{pena2000understanding,
  title={Understanding the geometry of infeasible perturbations of a conic linear system},
  author={Pe{\~n}a, Javier},
  journal={SIAM Journal on Optimization},
  volume={10},
  number={2},
  pages={534--550},
  year={2000},
  publisher={SIAM}
}

@inproceedings{Karimi2016PL,
  author  = {Karimi, Hamed and Nutini, Julie and Schmidt, Mark},
  title   = {{Linear convergence of gradient and proximal-gradient methods under the Polyak-\L{}ojasiewicz condition}},
  journal = {arXiv},
  year    = {2016},
  eprint  = {1608.04636},
  booktitle={Joint European
Conference on Machine Learning and Knowledge Discovery in Databases},
  url     = {https://doi.org/10.48550/arXiv.1608.04636}
}

@article{Tibshirani1996Lasso,
  title   = {Regression Shrinkage and Selection via the {L}asso},
  author  = {Tibshirani, Robert},
  journal = {Journal of the Royal Statistical Society: Series B (Methodological)},
  volume  = {58},
  number  = {1},
  pages   = {267--288},
  year    = {1996},
  url     = {https://doi.org/10.1111/j.2517-6161.1996.tb02080.x},
}

@article{Cortes1995SupportVector,
  author    = {Corinna Cortes and Vladimir Vapnik},
  title     = {Support-Vector Networks},
  journal   = {Machine Learning},
  year      = {1995},
  volume    = {20},
  number    = {3},
  pages     = {273--297},
  url       = {https://doi.org/10.1007/BF00994018},
  publisher = {Springer}
}

@inproceedings{Boser1992MarginClassifiers,
  author    = {Bernhard Boser and Isabelle Guyon and Vladimir Vapnik},
  title     = {A Training Algorithm for Optimal Margin Classifiers},
  booktitle = {Proceedings of the Fifth Annual Workshop on Computational Learning Theory (COLT '92)},
  year      = {1992},
  pages     = {144--152},
  url       = {https://doi.org/10.1145/130385.130401},
  publisher = {ACM},
  address   = {New York, NY, USA}
}

@article{Luo1993ErrorBounds,
  author    = {Luo, Zhi-Quan and Tseng, Paul},
  title     = {Error bounds and convergence analysis of feasible descent methods: a general approach},
  journal   = {Annals of Operations Research},
  volume    = {46},
  pages     = {157--178},
  year      = {1993},
  url       = {https://doi.org/10.1007/BF02096261}
}

@article{Drusvyatskiy2018ErrorBounds,
  title={Error bounds, quadratic growth, and linear convergence of proximal methods},
  author={Drusvyatskiy, Dmitriy and Lewis, Adrian},
  journal={Mathematics of Operations Research},
  volume={43},
  number={3},
  pages={919--948},
  year={2018},
  publisher={INFORMS},
  url={https://doi.org/10.1287/moor.2017.0889}
}

@article{Bolte2017ErrorBounds,
  author    = {Bolte, J{\'e}r{\^o}me and Nguyen, Pham and Peypouquet, Julien},
  title     = {From error bounds to the complexity of first-order descent methods for convex functions},
  journal   = {Mathematical Programming},
  volume    = {165},
  pages     = {471--507},
  year      = {2017},
  url       = {https://doi.org/10.1007/s10107-016-1091-6}
}

@article{Tseng2010GroupReg,
  author  = {Tseng, Paul},
  title   = {Approximation Accuracy, Gradient Methods, and Error Bound for Structured Convex Optimization},
  journal = {Mathematical Programming},
  volume  = {125},
  number  = {2},
  pages   = {263--295},
  year    = {2010},
  url     = {https://doi.org/10.1007/s10107-010-0394-2},
  publisher = {Springer}
}

@article{Zhang2013SparseGroupReg,
  author  = {Zhang, Haibin and Jiang, Jiaojiao and Luo, Zhi-Quan},
  title   = {On the Linear Convergence of a Proximal Gradient Method for a Class of Nonsmooth Convex Minimization Problems},
  journal = {Journal of the Operations Research Society of China},
  volume  = {1},
  number  = {2},
  pages   = {163--186},
  year    = {2013},
  url     = {https://doi.org/10.1007/s40305-013-0015-x},
  publisher = {Springer}
}

@inproceedings{Hou2013Nuclear,
  author    = {Hou, Ke and Zhou, Zirui and So, Anthony Man-Cho and Luo, Zhi-Quan},
  title     = {On the Linear Convergence of the Proximal Gradient Method for Trace Norm Regularization},
  booktitle = {Advances in Neural Information Processing Systems},
  volume    = {26},
  pages     = {710--718},
  year      = {2013},
  url       = {https://papers.nips.cc/paper_files/paper/2013/hash/41ae36ecb9b3eee609d05b90c14222fb-Abstract.html}
}

@article{Zhou2017ErrorBounds,
  title={A unified approach to error bounds for structured convex optimization problems},
  author={Zhou, Zhaosong and So, Anthony Man-Cho},
  journal={Mathematical Programming},
  volume={165},
  number={1},
  pages={689--728},
  year={2017},
  url={https://doi.org/10.1007/s10107-016-1100-9}
}

@book{Rockafellar1998Variational,
  author    = {Rockafellar, Tyrrell and Wets, Roger},
  title     = {Variational Analysis},
  series    = {Grundlehren der mathematischen Wissenschaften},
  volume    = {317},
  publisher = {Springer},
  address   = {Berlin, Heidelberg},
  year      = {1998},
  isbn      = {978-3-540-62772-2},
  url       = {https://doi.org/10.1007/978-3-642-02431-3}
}

@incollection{albano1999singularities,
  title={Singularities of semiconcave functions in {B}anach spaces},
  author={Albano, Paolo and Cannarsa, Piermarco},
  booktitle={Stochastic Analysis, Control, Optimization and Applications: A Volume in Honor of WH Fleming},
  pages={171--190},
  year={1999},
  publisher={Springer}
}

@article{peypouquet2010evi,
journal={Journal of Convex Analysis},
volume={17},
year={2010},
pages={1113--1163},
title={Evolution Equations for Maximal Monotone Operators: Asymptotic Analysis in Continuous and Discrete Time},
author={Peypouquet, Juan and Sorin, Sylvain}
}

@article{gupta2021path,
  title={Path length bounds for gradient descent and flow},
  author={Gupta, Chirag and Balakrishnan, Sivaraman and Ramdas, Aaditya},
  journal={Journal of Machine Learning Research},
  volume={22},
  number={68},
  pages={1--63},
  year={2021}
}

@book{brezis2011functional,
  title={Functional analysis, {S}obolev spaces and partial differential equations},
  author={Br{\'e}zis, Haim},
  year={2011},
  publisher={Springer}
}

@article{parikh2014proximal,
  title={Proximal algorithms},
  author={Parikh, Neal and Boyd, Stephen},
  journal={Foundations and Trends in optimization},
  volume={1},
  number={3},
  pages={127--239},
  year={2014},
  publisher={Emerald Publishing Limited}
}

@book{ambrosio2005gradient,
  title={Gradient flows: in metric spaces and in the space of probability measures},
  author={Ambrosio, Luigi and Gigli, Nicola and Savar{\'e}, Giuseppe},
  year={2005},
  publisher={Springer}
}

@article{Stella2017Forward,
  title={Forward--backward quasi-Newton methods for nonsmooth optimization problems},
  author={Stella, Lorenzo and Themelis, Andreas and Patrinos, Panagiotis},
  journal={Computational Optimization and Applications},
  volume={67},
  number={3},
  pages={443--487},
  year={2017},
  publisher={Springer},
  url={https://doi.org/10.1007/s10589-017-9912-y}
}

@article{Leventhal2008Randomized,
  author  = {Leventhal, Dennis and Lewis, Adrian},
  title   = {Randomized Methods for Linear Constraints: Convergence Rates and Conditioning},
  journal = {Mathematics of Operations Research},
  year    = {2010},
  volume={35},
  pages={641--654}
}

@article{Bolte2010characterizations,
  title={Characterizations of {\L}ojasiewicz inequalities: subgradient flows, talweg, convexity},
  author={Bolte, J{\'e}r{\^o}me and Daniilidis, Aris and Ley, Olivier and Mazet, Laurent},
  journal={Transactions of the American Mathematical Society},
  volume={362},
  number={6},
  pages={3319--3363},
  year={2010}
}

@article{Zhao2006ModelSelectionConsistency,
  author  = {Peng Zhao and Bin Yu},
  title   = {On Model Selection Consistency of {L}asso},
  journal = {Journal of Machine Learning Research},
  year    = {2006},
  volume  = {7},
  number  = {90},
  pages   = {2541--2563},
  url     = {http://jmlr.org/papers/v7/zhao06a.html}
}

@article{Hoffman1952ApproximateSolutions,
  title={On Approximate Solutions of Systems of Linear Inequalities},
  author={Hoffman, Alan},
  journal={Journal of Research of the National Bureau of Standards},
  volume={49},
  number={4},
  pages={263--265},
  year={1952},
  url={https://nvlpubs.nist.gov/nistpubs/jres/049/4/v49.n04.a05.pdf}
}

@inbook{Rudelson2010ExtremeSingularValues,
author = {Mark Rudelson and Roman Vershynin},
title = {Non-asymptotic Theory of Random Matrices: Extreme Singular Values},
booktitle = {Proceedings of the International Congress of Mathematicians 2010 (ICM 2010)},
chapter = {},
pages = {1576-1602},
year={2010},
publisher={World Scientific Publishing},
url = {https://doi.org/10.1142/9789814324359_0111},
}

@article{Nutini2019ActiveSet,
  author    = {Nutini, Julie and Schmidt, Mark and Hare, Warren},
  title     = {``Active-set complexity'' of proximal gradient: How long does it take to find the sparsity pattern?},
  journal   = {Optimization Letters},
  volume    = {13},
  pages     = {645--655},
  year      = {2019},
  url       = {https://doi.org/10.1007/s11590-018-1325-z}
}

@InProceedings{Sun2019ManifoldIdentification,
  title = 	 {Are we there yet? Manifold identification of gradient-related proximal methods},
  author =       {Sun, Yifan and Jeong, Halyun and Nutini, Julie and Schmidt, Mark},
  booktitle = 	 {Proceedings of the Twenty-Second International Conference on Artificial Intelligence and Statistics},
  pages = 	 {1110--1119},
  year = 	 {2019},
  volume = 	 {89},
  series = 	 {Proceedings of Machine Learning Research},
  publisher =    {PMLR},
  url = 	 {https://proceedings.mlr.press/v89/sun19a.html},
}

@article{HareLewis2004Active,
  author  = {Hare, Warren and Lewis, Adrian},
  title   = {Identifying Active Constraints via Partial Smoothness and Prox-Regularity},
  journal = {Journal of Convex Analysis},
  volume  = {11},
  number  = {2},
  pages   = {251--266},
  year    = {2004},
  publisher = {Heldermann Verlag}
}

@article{Lee2012ManifoldIdentification,
  author  = {Lee, Sangkyun and Wright, Stephen},
  title   = {Manifold Identification in Dual Averaging for Regularized Stochastic Online Learning},
  journal = {Journal of Machine Learning Research},
  volume  = {13},
  pages   = {1705--1744},
  year    = {2012},
  url     = {http://jmlr.org/papers/v13/lee12a.html}
}

@article{Burke1988ActiveConstraints,
  author  = {Burke, James and Mor{\'e}, Jorge},
  title   = {On the Identification of Active Constraints},
  journal = {SIAM Journal on Numerical Analysis},
  volume  = {25},
  number  = {5},
  pages   = {1197--1211},
  year    = {1988},
  url     = {https://doi.org/10.1137/0725068}
}

@article{Polyak1963Inequality,
title = {Gradient methods for the minimisation of functionals},
journal = {USSR Computational Mathematics and Mathematical Physics},
volume = {3},
number = {4},
pages = {864-878},
year = {1963},
issn = {0041-5553},
url = {https://doi.org/10.1016/0041-5553(63)90382-3},
author = {B.T. Polyak},
}

@article{Kurdyka1998,
  author  = {Kurdyka, Krzysztof},
  title   = {On gradients of functions definable in o-minimal structures},
  journal = {Annales de l'Institut Fourier},
  volume  = {48},
  number  = {3},
  pages   = {769--783},
  year    = {1998},
  doi     = {10.5802/aif.1629}
}

@article{Bickel2009LassoDantzigSelector,
  title={Simultaneous analysis of {L}asso and {D}antzig selector},
  author={Bickel, Peter and Ritov, Ya'acov and Tsybakov, Alexandre},
  journal={The Annals of Statistics},
  volume={37},
  number={4},
  pages={1705--1732},
  year={2009},
  url={https://doi.org/10.1214/08-AOS620},
}

@article{Candes2005DecodingLP,
  author  = {Cand{\`e}s, Emmanuel and Tao, Terence},
  title   = {Decoding by Linear Programming},
  journal = {IEEE Transactions on Information Theory},
  volume  = {51},
  number  = {12},
  pages   = {4203--4215},
  year    = {2005},
  url     = {https://doi.org/10.1109/TIT.2005.858979}
}

@inproceedings{Negahban2009UnifiedMEstimators,
 author = {Negahban, Sahand and Yu, Bin and Wainwright, Martin and Ravikumar, Pradeep},
 booktitle = {Advances in Neural Information Processing Systems},
 publisher = {Curran Associates, Inc.},
 title = {A unified framework for high-dimensional analysis of {$M$}-estimators with decomposable regularizers},
 url = {https://proceedings.neurips.cc/paper_files/paper/2009/file/dc58e3a306451c9d670adcd37004f48f-Paper.pdf},
 volume = {22},
 year = {2009}
}

@article{Lojasiewicz1963topological,
  title={A topological property of real analytic subsets},
  author={Łojasiewicz, Stanislaw},
  journal={Coll. du CNRS, Les {\'e}quations aux d{\'e}riv{\'e}es partielles},
  volume={117},
  number={87-89},
  pages={2},
  year={1963}
}

@article{bolte2014proximal,
  title={Proximal alternating linearized minimization for nonconvex and nonsmooth problems},
  author={Bolte, J{\'e}r{\^o}me and Sabach, Shoham and Teboulle, Marc},
  journal={Mathematical Programming},
  volume={146},
  number={1},
  pages={459--494},
  year={2014},
  publisher={Springer}
}

@article{Agarwal2012FastGlobal,
  title={Fast Global Convergence of Gradient Methods for High-Dimensional Statistical Recovery},
  author={Agarwal, Alekh and Negahban, Sahand and Wainwright, Martin},
  journal={The Annals of Statistics},
  volume={40},
  number={5},
  pages={2452--2482},
  year={2012},
  publisher={Institute of Mathematical Statistics},
  url={https://arxiv.org/abs/1104.4824}
}

@book{Wainwright2019HighDimensional,
  title={High-Dimensional Statistics: A Non-Asymptotic Viewpoint},
  author={Wainwright, Martin},
  series={Cambridge Series in Statistical and Probabilistic Mathematics},
  year={2019},
  publisher={Cambridge University Press},
  url={https://doi.org/10.1017/9781108627771}
}

@book{Vershynin2018HighDimensional, 
  series={Cambridge Series in Statistical and Probabilistic Mathematics}, 
  title={High-Dimensional Probability: An Introduction with Applications in Data Science}, 
  publisher={Cambridge University Press}, 
  author={Vershynin, Roman}, 
  year={2018}, 
  url={https://doi.org/10.1017/9781108231596}
}

@article{Bolte2007KLSubanalytic,
  author  = {Bolte, J{\'e}r{\^o}me and Daniilidis, Aris and Lewis, Adrian},
  title   = {The {\L}ojasiewicz Inequality for Nonsmooth Subanalytic Functions with Applications to Subgradient Dynamical Systems},
  journal = {SIAM Journal on Optimization},
  volume  = {17},
  number  = {4},
  pages   = {1205--1223},
  year    = {2007},
  url     = {https://doi.org/10.1137/050644641}
}

@article{Attouch2013DescentMethodsSemiAlgebraic,
  author  = {Attouch, H{\'e}dy and Bolte, J{\'e}r{\^o}me and Svaiter, Benar Fux},
  title   = {Convergence of descent methods for semi-algebraic and tame problems: proximal algorithms, forward--backward splitting, and regularized {G}auss--{S}eidel methods},
  journal = {Mathematical Programming},
  volume  = {137},
  number  = {1--2},
  pages   = {91--129},
  year    = {2013},
  url     = {https://doi.org/10.1007/s10107-011-0484-9}
}

@article{Robinson1981PolyhedralMultifunctions,
  title={Some continuity properties of polyhedral multifunctions},
  author={Robinson, Stephen},
  journal={Mathematical Programming at Oberwolfach},
  volume={14},
  pages={206--214},
  year={1981},
  publisher={Springer Berlin Heidelberg},
  url={https://doi.org/10.1007/BFb0120929}
}

@article{Pena2021Hoffman,
  author    = {Pe{\~n}a, Javier and Vera, Javier and Zuluaga, Luis},
  title     = {New Characterizations of {H}offman Constants for Systems of Linear Constraints},
  journal   = {Mathematical Programming},
  volume    = {187},
  pages     = {79--109},
  year      = {2021},
  doi       = {10.1007/s10107-020-01473-6},
  url       = {https://doi.org/10.1007/s10107-020-01473-6},
  publisher = {Springer},
}

@article{Wang2014IterationComplexitySVM,
  author  = {Po-Wei Wang and Chih-Jen Lin},
  title   = {Iteration Complexity of Feasible Descent Methods for Convex Optimization},
  journal = {Journal of Machine Learning Research},
  year    = {2014},
  volume  = {15},
  number  = {45},
  pages   = {1523--1548},
  url     = {http://jmlr.org/papers/v15/wang14a.html}
}

@article{Tseng2009BlockCoordinateSVM,
  author  = {Tseng, Paul and Yun, Sangwoon},
  title   = {Block-Coordinate Gradient Descent Method for Linearly Constrained Nonsmooth Separable Optimization},
  journal = {Journal of Optimization Theory and Applications},
  volume  = {140},
  number  = {3},
  pages   = {513--535},
  year    = {2009},
  url     = {https://doi.org/10.1007/s10957-008-9458-3}
}

@article{Xiao2013ProximalHomotopy,
  title={A Proximal-Gradient Homotopy Method for the Sparse Least-Squares Problem},
  author={Xiao, Lin and Zhang, Tong},
  journal={SIAM Journal on Optimization},
  volume={23},
  number={2},
  pages={1062--1091},
  year={2013},
  publisher={SIAM},
  url={https://doi.org/10.1137/12086999}
}

@article{Pena2024HoffmanHomogeneous,
  author  = {Peña, Javier},
  title   = {An easily computable upper bound on the {H}offman constant for homogeneous inequality systems},
  journal = {Computational Optimization and Applications},
  volume  = {87},
  number  = {2},
  pages   = {323--335},
  year    = {2024},
  url     = {https://doi.org/10.1007/s10589-023-00514-y}
}

@article{Mao2024LowDimManifold,
  author  = {Mao, Jialin and Griniasty, Itay and Teoh, Han Kheng and 
             Ramesh, Raghav and Yang, Rui and Transtrum, Mark and 
             Sethna, James and Chaudhari, Pratik},
  title   = {The training process of many deep networks explores the same low-dimensional manifold},
  journal = {Proceedings of the National Academy of Sciences},
  volume  = {121},
  number  = {12},
  pages   = {e2310002121},
  year    = {2024},
  url     = {https://doi.org/10.1073/pnas.2310002121}
}

@inproceedings{Nutini2015Coordinate,
  title={Coordinate Descent Converges Faster with the {G}auss-{S}outhwell Rule Than Random Selection},
  author={Nutini, Julie and Schmidt, Mark and Laradji, Issam and Friedlander, Michael and Koepke, Hoyt},
  booktitle={Proceedings of the 32nd International Conference on Machine Learning},
  series={JMLR},
  volume={37},
  year={2015},
  url={https://proceedings.mlr.press/v37/nutini15.pdf}
}

@incollection{Meir2003Boosting,
  title={An Introduction to Boosting and Leveraging},
  author={Meir, Ron and R{\"a}tsch, Gunnar},
  booktitle={Advanced Lectures on Machine Learning},
  editor={Mendelson, S. and Smola, A.J.},
  series={Lecture Notes in Computer Science},
  volume={2600},
  publisher={Springer},
  address={Berlin, Heidelberg},
  year={2003},
  url={https://doi.org/10.1007/3-540-36434-X_4}
}

@inproceedings{Schoelkopf2001Representer,
  author    = {Sch{\"o}lkopf, Bernhard and Herbrich, Ralf and Smola, Alex},
  title     = {A Generalized Representer Theorem},
  booktitle = {Computational Learning Theory (COLT 2001)},
  editor    = {Helmbold, David and Williamson, Bob},
  series    = {Lecture Notes in Computer Science},
  volume    = {2111},
  pages     = {416--426},
  publisher = {Springer},
  address   = {Berlin, Heidelberg},
  year      = {2001},
  url       = {https://doi.org/10.1007/3-540-44581-1_27}
}

@article{Damadi2022HardThresholding,
  author        = {Damadi, Saeed and Shen, Jinglai},
  title         = {Gradient Properties of Hard Thresholding Operator},
  journal       = {arXiv},
  year          = {2022},
  eprint        = {2209.08247},
  url           = {https://doi.org/10.48550/arXiv.2209.08247}
}

@article{Pang1997ErrorBounds,
  author    = {Jong-Shi Pang},
  title     = {Error bounds in mathematical programming},
  journal   = {Mathematical Programming},
  volume    = {79},
  number    = {1--3},
  pages     = {299--332},
  year      = {1997},
  publisher = {Springer},
  url       = {https://doi.org/10.1007/BF02614322}
}

@book{Beck2017FirstOrderMethods,
  author    = {Amir Beck},
  title     = {First-Order Methods in Optimization},
  year      = {2017},
  publisher = {Society for Industrial and Applied Mathematics},
  address   = {Philadelphia, PA},
  series    = {MOS-SIAM Series on Optimization},
  volume    = {25},
  url       = {https://doi.org/10.1137/1.9781611974997}
}

@book{Boyd2004ConvexOptimization,
  title     = {Convex Optimization},
  author    = {Boyd, Stephen and Vandenberghe, Lieven},
  year      = {2004},
  publisher = {Cambridge University Press},
  url       = {https://doi.org/10.1017/CBO9780511804441}
}

@inproceedings{hsieh2008dual,
  title={A Dual Coordinate Descent Method for Large-scale Linear SVM},
  author={Hsieh, Cho-Jui and Chang, Kai-Wei and Lin, Chih-Jen and Keerthi, S. Sathiya and Sundararajan, S.},
  booktitle={Proceedings of the 25th International Conference on Machine Learning (ICML)},
  pages={408--415},
  year={2008},
  address={Helsinki, Finland},
  url={https://doi.org/10.1145/1390156.1390208}
}

\clearpage
\appendix

\section{
Proof of Theorem \ref{thm:equivalence}
}
\label{app:equivalence}

In this appendix, we provide the proof for~\cref{thm:equivalence}, establishing the equivalence between the restricted EB and the restricted PL inequality as well as deriving the explicit equivalence constants.

\subsection{The Restricted Kurdyka--{\L}ojasiewicz Inequality}

Before proving~\cref{thm:equivalence},
we define the restricted Kurdyka--{\L}ojasiewicz (KL) inequality.
Recall the definition of the Fr\'echet (regular) subdifferential~\citep[Definition 8.3]{Rockafellar1998Variational}. For a real-valued function $F$, a vector $s$ lies in $\partial F(x)$ if
\begin{equation*}
    \liminf_{y \to x, y \neq x}
    \frac{F(y) - F(x) - \langle s, y - x \rangle}{\|y - x\|_2} \ge 0.
\end{equation*}
For the composite objective $F(x) = f(x) + g(x)$ with differentiable $f$ and convex $g$, the Fr\'echet subdifferential satisfies
\begin{equation*}
    \partial F(x) := \{ \nabla f(x) + \xi \mid \xi \in \partial g(x) \}.
\end{equation*}

\begin{definition}[Restricted KL Inequality with Exponent $1/2$]
    \label{def:restricted-kl-half}
    Let $\mathcal{K}\subseteq \mathbb{R}^{d}$.
    The function $F$ satisfies the \emph{$\mathcal{K}$-restricted K{\L} inequality with exponent $1/2$} if 
    there exists a constant $\tilde{\nu}_{\mathcal{K}} > 0$ such that
    \begin{equation}
    \label{eq:kl-half}
    \min_{s \in \partial F(x)} \|s\|_2^2 \;\ge\; 2\tilde{\nu}_{\mathcal{K}}\bigl(F(x)-F^*\bigr)
    \text{
for all $x \in \mathcal{K}$}.
\end{equation}
\end{definition}



\subsection{Proof of Theorem \ref{thm:equivalence}}

\begin{proof}

Let $x^+ := x - (1/L)\mathcal{G}_{1/L}(x)$ denote the proximal gradient update step, where $\mathcal{G}_{1/L}(x)$ is the proximal gradient mapping defined in~\eqref{eq:gradient-map} with step size $1/L$. 
    
We use the forward-backward envelope (FBE) of $F$ \cite[Definition 2.1]{Stella2017Forward}:
    \begin{equation}
    \label{eq:fbe}
        F_{1/L}(x) := f(x) + \langle \nabla f(x), x^+ - x \rangle + \frac{L}{2}\|x^+ - x\|_2^2 + g(x^+).
    \end{equation}
    Then the generalized gradient size $\mathcal{D}_g$ satisfies
    \begin{equation}
        \label{eq:Dg-identity}
        \frac{1}{2L} \mathcal{D}_g(x, L) = F(x) - F_{1/L}(x).
    \end{equation}
    
    \paragraph{Direction 1: Restricted Proximal EB $\implies$ Restricted Proximal PL.}
    Assume the $\mathcal{K}$-restricted proximal EB holds with constant $\mu_{\mathcal{K}}$:
    $\dist(x, \mathcal{X}^*) \le \mu_{\mathcal{K}} \|\mathcal{G}_{1/L}(x)\|_2$ for all $x \in \mathcal{K}$.
    
    We start with the upper bound on the suboptimality gap derived from the Lipschitz continuity of $\nabla f$. 
    Recall the definitions of the proximal gradient mapping in~\eqref{eq:gradient-map} and the FBE \eqref{eq:fbe}. Then we have that
    for any optimal solution $x^{*}=P_{\mathcal{X}^{*}}(x)$ with $P_{\mathcal{X}^{*}}(x)$ being the projection of $x$ onto $\mathcal{X}^{*}$,
    \begin{align*}
        F_{1/L}(x)-F^{*} 
        &= \min_{y\in\mathbb{R}^{d}}
        \left\{
        f(x)+\langle \nabla f(x),y-x\rangle + \frac{L}{2}\|y-x\|^{2}_{2}+g(y)
        \right\}-f(x^{*})-g(x^{*})\\
        &\le f(x)+\langle \nabla f(x),x^{*}-x\rangle + \frac{L}{2}\|x^{*}-x\|^{2}_{2}+g(x^{*})-f(x^{*})-g(x^{*})\\
        &=f(x)-f(x^{*})+\langle \nabla f(x),x^{*}-x\rangle + \frac{L}{2}\|x^{*}-x\|^{2}_{2}.
    \end{align*}
    The $L$-smoothness of $f$ gives
    \[
    f(x)-f(y) \le \langle \nabla f(y),x-y\rangle + \frac{L}{2}\|y-x\|^{2}_{2}, \quad \text{for all $x,y\in\mathbb{R}^{d}$}.
    \]
    Together with the Cauchy--Schwarz inequality, this implies 
    \begin{align*}
    f(x)-f(x^{*}) + \langle \nabla f(x),x^{*}-x\rangle 
    &\le 
    \langle \nabla f(x^{*})- \nabla f(x) , x^{*}-x \rangle +\frac{L}{2}\|x^{*}-x\|_{2}^{2}\\
    &\le \|\nabla f(x^{*})-\nabla f(x)\|_{2}\|x^{*}-x\|_{2}+\frac{L}{2}\|x^{*}-x\|_{2}^{2}\\
    &\le \frac{3L}{2}\|x^{*}-x\|^{2}_{2},
    \end{align*}
    which yields
    \begin{equation}
        \label{eq:fbe-suboptimality-bound}
        F_{1/L}(x) - F^* \le 2L \|x - x^*\|_2^2.
    \end{equation}
    Observe that
    $
    F(x)-F^{*} = (F(x) - F_{1/L}(x)) + (F_{1/L}(x) - F^*).
    $
    Using the identity~\eqref{eq:Dg-identity} for the first sum component and~\eqref{eq:fbe-suboptimality-bound} for the second, we obtain that for $x \in \mathcal{K}$,
    \begin{align}
        F(x) - F^* 
        &\le \frac{1}{2L} \mathcal{D}_g(x, L) + 2L \|x - x^*\|_2^2 \nonumber \\
        &\le \frac{1}{2L} \mathcal{D}_g(x, L) + 2L \mu_{\mathcal{K}}^2 \|\mathcal{G}_{1/L}(x)\|_2^2, \label{eq:dir1-inter}
    \end{align}
    where the second inequality applies the restricted proximal EB.
    
    We now show that the squared norm of the gradient mapping is bounded by $\mathcal{D}_g$. 
    The first-order optimality for the proximal subproblem defining $x^{+}$ yields
    \[
    0 \in \partial g(x^{+}) + L\left(x^{+}-x+\frac{1}{L}\nabla f(x)\right)
    = \partial g(x^{+}) + \nabla f(x) + L(x^{+}-x).
    \]
    Hence there exists a subgradient \(\xi^{+}\in \partial g(x^{+})\) such that
    $\xi^{+} + \nabla f(x) + L(x^{+}-x)=0.$
    Together with the convexity of $g$
    $g(z)\ge g(x^{+})+\langle \xi^{+},\, z-x^{+}\rangle$
    for any $z\in\mathbb{R}^{d}$, gives
    \[
    g(z)\ge g(x^{+})+\left\langle -\nabla f(x)-L(x^{+}-x),\, z-x^{+}\right\rangle.
    \]
    In particular, taking \(z=x\) leads to
    \[
    g(x)\ge g(x^{+})+\left\langle -\nabla f(x)-L(x^{+}-x),\, x-x^{+}\right\rangle.
    \]
    Rearranging, we have
    \[
    g(x^{+})-g(x)
    \le
    \langle \nabla f(x),\, x-x^{+}\rangle - L\|x-x^{+}\|^{2}.
    \]
    Therefore,
    \[
    g(x^{+})-g(x)+\langle \nabla f(x),\, x^{+}-x\rangle
    \le -L\|x-x^{+}\|^{2}.
    \]
    Substituting this into the definition of the FBE \eqref{eq:fbe}, we obtain
    \[
    F_{1/L}(x)-F(x)
    =
    g(x^{+})-g(x)
    +\langle \nabla f(x),\, x^{+}-x\rangle
    +\frac{L}{2}\|x^{+}-x\|^{2},
    \]
    from which we conclude that
    \begin{equation}
    \label{eq:bounding norm of gradmap}
    F(x)-F_{1/L}(x)\ge \frac{L}{2}\|x-x^{+}\|^{2}.
    \end{equation}

    Since $\|\mathcal{G}_{1/L}(x)\|_2 = L\|x - x^+\|_2$, the inequality \eqref{eq:bounding norm of gradmap} implies
    \begin{equation*}
        \|\mathcal{G}_{1/L}(x)\|_2^2 
        =L^2 \|x - x^+\|_2^2
        \le 2L (F(x) - F_{1/L}(x)) = \mathcal{D}_g(x, L).
    \end{equation*}
    Substituting this into~\eqref{eq:dir1-inter} gives
    \begin{equation*}
        F(x) - F^* \le \left( \frac{1}{2L} + 2L \mu_{\mathcal{K}}^2 \right) \mathcal{D}_g(x, L).
    \end{equation*}
    Rearranging this gives the $\mathcal{K}$-restricted proximal PL inequality:
    \begin{equation*}
        \frac{1}{2} \mathcal{D}_g(x, L) \ge \frac{L}{1 + 4L^2 \mu_{\mathcal{K}}^2}(F(x) - F^*).
    \end{equation*}
    That is, the proximal PL inequality holds with $\nu_{\mathcal{K}} = L/\{1 + 4L^2 \mu_{\mathcal{K}}^2\}$. 
    
    \paragraph{Direction 2: Restricted Proximal PL $\implies$ Restricted Proximal EB.} This direction relies on the connection to the restricted KL inequality.
    Assume the $\mathcal{K}$-restricted proximal PL inequality holds with constant $\nu_{\mathcal{K}}$.
    
    \textit{Step 1: Restricted Proximal PL implies Restricted KL.}
    Fix $x\in\mathcal{K}$ arbitrarily.
    We first bound the generalized gradient size $\mathcal{D}_g$ by the minimum $\ell_2$-norm subgradient. Let $\xi \in \partial g(x)$ be any subgradient. By definition, $g(y) - g(x) \ge \langle \xi, y - x \rangle.$ Substituting this into the definition of $\mathcal{D}_g$ gives
    \begin{equation*}
        \mathcal{D}_g(x, L) \le -2L \min_{y\in\mathbb{R}^{d}} \left[ \langle \nabla f(x) + \xi, y - x \rangle + \frac{L}{2} \|y - x\|_2^2 \right].
    \end{equation*}
    The minimization on the RHS yields exactly $-(1/2L) \|\nabla f(x) + \xi\|_2^2$. Thus, we have 
    \[\mathcal{D}_g(x, L) \le \|\nabla f(x) + \xi\|_2^2
    \quad\text{for any $\xi \in \partial g(x)$}.\]
    Since the Fr\'{e}chet subdifferential of $F$ satisfies $\partial F(x)=\{\nabla f(x)+\xi \mid \xi \in\partial g(x)\},$
    this implies 
    \[
    \mathcal{D}_g(x, L) \le \min_{s\in \partial F(x)} \|s\|_{2}^{2},
    \]
    which, together with the restricted proximal PL inequality, $\frac{1}{2}\mathcal{D}_g(x, L) \ge \nu_{\mathcal{K}}(F(x) - F^*)$,
    yields the $\mathcal{K}$-restricted KL inequality with $\tilde{\nu}_{\mathcal{K}} = \nu_{\mathcal{K}}$:
    \[
    \min_{s\in \partial F(x)} \|s\|_{2}^{2} \ge 2\nu_{\mathcal{K}}(F(x) - F^*).
    \]

    \vspace{3mm}
    \textit{Step 2: Taking a subgradient flow staying in $\mathcal{K}$.}
We next take a subgradient flow starting from $x\in\mathcal{K}$ and staying in $\mathcal{K}$.
To this end, we recall the notion of semi-convexity. A function $F$ is said to be \emph{semi-convex} if there exists $\rho > 0$ such that $F(x) + \rho \|x\|_2^2$ is convex.
    The composite objective $F$ is semi-convex from the smoothness of $f$ and the convexity of $g$.
    Thus, for any $x\in\mathcal{K}\cap \mathop{\mathrm{dom}}(F)$, 
    there exists a unique subgradient flow $\chi_{x}:[0,\infty)\to\mathop{\mathrm{dom}}(F)$  \citep[Theorem 13]{Bolte2010characterizations}, that is, the curve satisfying the differential inclusion
    \begin{equation*}
        \dot{\chi}_{x}(t) \in -\partial F(\chi_{x}(t))
        \quad
        \text{for a.e.}~t\in[0,\infty)
        \text{ and }
        \chi_{x}(0) = x.
    \end{equation*}
    This unique subgradient flow satisfies
    \begin{equation}
    \label{eq:energy decrease of subgradient flow}
        \frac{d}{dt}F(\chi_{x}(t)) = -\|\dot{\chi}_{x}(t)\|_{2}^{2}
        \quad
        \text{for a.e.~$t\in[0,\infty)$},
    \end{equation}
    where $F(\chi_{x}(t))$ is non-increasing and Lipschitz-continuous with respect to $t$ on $[\eta,\infty]$ for any $\eta>0$.

The following lemma ensures that 
under Assumptions~\ref{assum:stability of proximal gradient update} and \ref{assum:Kclosed},
the unique subgradient flow $\chi_{x}(\cdot)$ starting from a point in $\mathcal{K}$ remains in $\mathcal{K}$.
    
    \begin{lemma}[Flow Invariance of $\mathcal K$]
\label{lem:flow-invariance}
Under Assumptions~\ref{assum:stability of proximal gradient update} and \ref{assum:Kclosed},
the unique subgradient flow of $F$ starting from a point in $\mathcal K$ remains in $\mathcal K$.
\end{lemma}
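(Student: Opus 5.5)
We plan to approximate the subgradient flow $\chi_x$ by piecewise-constant (or piecewise-linear) interpolations of proximal gradient iterates with vanishing step sizes. Invariance of each iterate follows from Assumption~\ref{assum:stability of proximal gradient update}, and membership then passes to the limit by closedness (Assumption~\ref{assum:Kclosed}). Concretely, fix $x\in\mathcal K\cap\dom(F)$, a horizon $T>0$, and step sizes $\lambda\in(0,1/L]$ with $\lambda\to 0$. We define $x_0^\lambda=x$ and $x^\lambda_{k+1}=x^\lambda_k-\lambda\mathcal G_\lambda(x^\lambda_k)=\prox_{\lambda g}(x^\lambda_k-\lambda\nabla f(x^\lambda_k))$. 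By Assumption~\ref{assum:stability of proximal gradient update} and induction, $x^\lambda_k\in\mathcal K$ for all $k$. We set $\chi^\lambda(t):=x^\lambda_{\lfloor t/\lambda\rfloor}$. It then suffices to show $\chi^\lambda(t)\to\chi_x(t)$ for each $t\in[0,T]$, since $\chi^\lambda(t)\in\mathcal K$ and $\mathcal K$ is closed.

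For the convergence we treat the scheme as a semi-implicit (forward--backward) Euler discretization of $\dot\chi\in-\nabla f(\chi)-\partial g(\chi)$. The step can be written as $\frac{x_{k+1}-x_k}{\lambda}+\nabla f(x_k)+\xi_{k+1}=0$ with $\xi_{k+1}\in\partial g(x_{k+1})$. To compare with $\chi_x$, we use the monotonicity of $\partial g$ together with the Lipschitz continuity of $\nabla f$. Let $e_k:=\|x_k-\chi_x(k\lambda)\|_2$. Taking the inner product of the difference of the two inclusions with $x_{k+1}-\chi_x((k+1)\lambda)$ and dropping the monotone term, we aim for a discrete Gronwall estimate
\[
e_{k+1}^2\le(1+C\lambda)e_k^2+r_k,
\]
where the consistency errors $r_k$ sum to $o(1)$ as $\lambda\to0$. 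The natural tool is the evolution variational inequality for the semi-convex $F$ (the $\omega$-convexity framework of \citet{ambrosio2005gradient} and \citet{peypouquet2010evi}), adapted from the proximal-point (implicit) scheme to the forward--backward scheme. The extra explicit term $\nabla f(x_k)-\nabla f(x_{k+1})$ is bounded by $L\|x_{k+1}-x_k\|_2$. We then need a bound on $\sum_k\lambda\|x_{k+1}-x_k\|_2/\lambda\cdot\|x_{k+1}-x_k\|_2$. Summing the descent inequality $F(x_{k+1})\le F(x_k)-\frac{1}{2\lambda}\|x_{k+1}-x_k\|^2$ (valid since $\lambda\le1/L$) gives $\sum_k\|x_{k+1}-x_k\|^2\le 2\lambda(F(x)-F^*)$, which is $O(\lambda)$.

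The main obstacle is the consistency error when $x\in\dom(F)$ but $\partial F(x)$ may be empty or unbounded, so the flow need not be Lipschitz near $t=0$. We would first handle starting points with $\partial F(x)\neq\emptyset$. There the flow is Lipschitz with constant governed by $\min_{s\in\partial F(x)}\|s\|$ (up to the factor $e^{\rho t}$ from semi-convexity), and the discrete iterates obey the analogous bound $\|\mathcal G_\lambda(x_k)\|_2\le e^{C k\lambda}\min_{s\in\partial F(x)}\|s\|_2$. This yields $e_k=O(\sqrt\lambda)$ on $[0,T]$. For a general $x\in\mathcal K\cap\dom(F)$ we would instead use a single proximal gradient step $x^\lambda_1\in\mathcal K$, which has nonempty subdifferential. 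Stability of the flow with respect to initial data, $\|\chi_x(t)-\chi_{y}(t)\|\le e^{\rho t}\|x-y\|$ from semi-convexity, then transfers the conclusion, using that $x_1^\lambda\to x$ as $\lambda\to0$ for $x\in\dom(F)$. Alternatively, one can invoke directly the known convergence of forward--backward Euler schemes to the flow of $\nabla f+\partial g$ (a Crandall--Liggett/Brezis type result) and then only verify the invariance and closedness steps.
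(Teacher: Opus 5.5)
Your proposal is correct, but it reaches the conclusion by a different route from the paper's.

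The paper proves no error estimate. It takes the piecewise-affine interpolant $u_\lambda$ of the proximal-gradient iterates and combines the energy bound $\sum_k\|x_{k+1}-x_k\|_2^2/\lambda\le 4(F(x)-F^*)$ with a growth bound on $\|x_k\|_2$ to get a uniform $H^1(0,T)$ bound. It then extracts a uniformly convergent subsequence (Rellich--Kondrachov) and shows the limit satisfies an integrated form of the variational inequality that characterizes subgradient flows of the semi-convex $F$. Uniqueness identifies that limit with $\chi_x$, and closedness of $\mathcal K$ finishes.

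You instead compare iterates and flow directly. For $\partial F(x)\neq\emptyset$ you use a priori velocity bounds. The flow satisfies $\|\dot\chi_x(t)\|_2\le e^{Lt}\dist(0,\partial F(x))$. The discrete velocities obey the same bound, because $\|x_1-x_0\|_2\le\lambda\|\nabla f(x)+\xi\|_2$ for every $\xi\in\partial g(x)$ and $\|x_{k+2}-x_{k+1}\|_2\le(1+\lambda L)\|x_{k+1}-x_k\|_2$ by nonexpansiveness of the prox. From these, monotonicity of $\partial g$ and discrete Gr\"onwall give the error bound. You then reach a general $x\in\dom(F)$ through one regularizing prox-gradient step and the $e^{Lt}$-stability of the flow in its initial datum.

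What each route buys:
\begin{itemize}
\item Your route gives an explicit $O(\sqrt\lambda)$ rate and convergence of the whole family, not just a subsequence. It avoids weak compactness and the delicate limit passage in the variational inequality (lower semicontinuity, Fatou, recovering the pointwise inclusion).
\item The price is that you lean on Br\'ezis-type regularity of the flow for data in $\dom\partial F$, and you need the extra density step.
\item The paper's argument works from any $x\in\dom(F)$ using only the energy bound, but it is purely qualitative and rests on uniqueness.
\end{itemize}

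One detail to get right when you write out the Gr\"onwall step. Do not take the inner product at the grid point $(k+1)\lambda$. Instead, for a.e.\ $t\in(k\lambda,(k+1)\lambda)$, pair $\xi_{k+1}\in\partial g(x_{k+1})$ with $-\dot\chi_x(t)-\nabla f(\chi_x(t))\in\partial g(\chi_x(t))$ against $x_{k+1}-\chi_x(t)$, and integrate over the step. At the grid point there is no subgradient of $g$ matching the increment $\chi_x((k+1)\lambda)-\chi_x(k\lambda)$, and the naive consistency error is only $O(\lambda V)$ per step. In integrated form, with $V$ the common velocity bound, you get
\[e_{k+1}^2\le(1+C\lambda)e_k^2+C'\lambda^2V^2,\]
which yields your $O(\sqrt\lambda)$ estimate.
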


The proof for \Cref{lem:flow-invariance} is given after we conclude the main proof of \Cref{thm:equivalence}; it uses an approximation of the subgradient flow by using the proximal gradient sequence, which is slightly different from the standard proximal-point approximation~\citep{ambrosio2005gradient,peypouquet2010evi}.

\vspace{3mm}
\textit{Step 3: Constructing a subdifferential error bound using the subgradient flow.}
We now show
    \begin{equation}
    \label{eq:bounding min norm of subgrads}
    \min_{s\in\partial F(x)} \|s\|_{2} \ge  \nu_{\mathcal{K}}\dist(x,\mathcal{X}^{*}) \quad\text{for any $x\in\mathcal{K}\cap \dom(F)$}.
    \end{equation}
Define $r(t):=\sqrt{F(\chi_{x}(t))-F^{*}}$ for $t\in [0,\infty)$.
    By the definition of subgradient flow and the restricted K{\L} inequality, we have
    \begin{align*}
        \dot{r}(t) = \frac{\dot{F}(\chi_{x}(t))}{2\sqrt{F(\chi_{x}(t))-F^{*}}}
        = -\frac{\|\dot{\chi_{x}}(t)\|^{2}_{2}}{2\sqrt{F(\chi_{x}(t))-F^{*}}}
        \le -\sqrt{\nu_{\mathcal{K}}/2}\|\dot{\chi}_{x}(t)\|_{2}.
    \end{align*}
    Integrating this expression from 0 to any $T>0$ gives
    \begin{align}
    \label{eq: length inequality}
        r(T)-r(0) = \int_{0}^{T}\dot{r}(t)dt 
        \le 
        - \sqrt{\frac{\nu_{\mathcal{K}}}{2}}\int_{0}^{T}\|\dot{\chi}_{x}(t)\|dt
        \le - \sqrt{\frac{\nu_{\mathcal{K}}}{2}}\|\chi_{x}(T)-x\|.
    \end{align}

We now prove that as $r(T)\to 0$,
$\chi_{x}(T)$ converges to some $x_{\infty}$ as $T\to\infty$ and
$x_{\infty}\in \mathcal{X}^{*}$ as follows.
    Observe that
    \[
    \dot{r}(t) = 
    -\frac{\|\dot{\chi}_{x}(t)\|_{2}^{2}}{2\sqrt{F(\chi_{x}(t))-F^{*}}} \le - \frac{\nu_{\mathcal{K}}}{2}r(t), \quad t\in[\eta,T],
    \]
    where $\eta\in (0,T)$ is taken arbitrarily and we use the restricted KL inequality.
    Together with the Gr\"{o}nwall inequality, this gives
    \[
    r(t)\le r(\eta) \exp(-\nu_{\mathcal{K}}(t-\eta)/2)
    \] and  $r(T)\to0$. 
    Next, we show $\chi_{x}(T)$ converges to some $x_{\infty}$.
    For any $s>t\ge 0$, we have
\[
\|\chi_x(s)-\chi_x(t)\|_2
\le
\int_t^s \|\dot{\chi}_x(\tau)\|_2\,d\tau.
\]
On the other hand, integrating $\dot r(\tau)\le -\sqrt{\nu_{\mathcal K}/2}\,\|\dot{\chi}_x(\tau)\|_2$ from $t$ to $s$ yields
\[
\int_t^s \|\dot{\chi}_x(\tau)\|_2\,d\tau
\le
\sqrt{\frac{2}{\nu_{\mathcal K}}}\,(r(t)-r(s))
\le
\sqrt{\frac{2}{\nu_{\mathcal K}}}\,r(t),
\]
implying 
\[
\|\chi_x(s)-\chi_x(t)\|_2
\le
\sqrt{\frac{2}{\nu_{\mathcal K}}}\,r(t).
\]
Since $r(t)\to 0$, this shows that the flow $\chi_x(t)$ is Cauchy, hence it converges to some $x_\infty\in\mathbb R^d$.
Finally, by lower semicontinuity of $F$, we obtain
\[
F(x_\infty)\le \liminf_{t\to\infty}F(\chi_x(t))=F^*,
\]
which implies $x_\infty\in\mathcal X^*$.

    From $r(T)\to 0$ and $\lim_{T\to\infty}\chi_{x}(T)\in \mathcal{X}^{*}$, \eqref{eq: length inequality} yields
    \[
    r(0)=\sqrt{F(x)-F^{*}} \ge \sqrt{\frac{\nu_{\mathcal{K}}}{2}}\dist(x,\mathcal{X}^{*}),
    \]
    which, together with the restricted KL inequality, leads to the inequality \eqref{eq:bounding min norm of subgrads} as desired.

\vspace{3mm}
    \textit{Step 4: From the subdifferential error bound to the restricted EB.} Now, the first-order optimality for the proximal subproblem defining $x^{+}$ yields
    \[
    0 \in \partial g(x^{+}) + L\left(x^{+}-x+\frac{1}{L}\nabla f(x)\right)
    = \partial g(x^{+}) + \nabla f(x) + L(x^{+}-x).
    \]
    Hence there exists a subgradient \(\xi^{+}\in \partial g(x^{+})\) such that
    $\xi^{+} + \nabla f(x) + L(x^{+}-x)=0.$
    Using this, we have
    \begin{equation}
    \label{eq: bounding min subgrad above}
    \min_{s\in\partial F(x^{+})} \|s\|_{2} \le \|\nabla f(x^{+})+\xi^{+}\|
    \le \|\nabla f(x)-\nabla f(x^{+})\|+L\|x-x^{+}\|
    \le 2L \|x-x^{+}\|.
    \end{equation}
    Together with the assumption that $x^{+}$ remains in $\mathcal{K}$,
    combining this with \eqref{eq:bounding min norm of subgrads} for $x^{+}$ gives
    \[
\nu_{\mathcal{K}} \cdot \dist(x,\mathcal{X}^{*}) \le 2L\|x-x^{+}\|.
    \]
    By the triangle inequality, $\dist(x, \mathcal{X}^*) \le \|x-x^{+}\|_{2} + \dist(x^{+},\mathcal{X}^{*}).$
    Thus, we have
    \[
    \dist(x, \mathcal{X}^*) 
    \le \left(1 + \frac{2L}{\nu_{\mathcal{K}}}\right) 
    \|x - x^+\|_2.
    \]
    Recall $\|x - x^+\|_2 = (1/L) \|\mathcal{G}_{1/L}(x)\|_2$. Thus, the $\mathcal{K}$-restricted proximal EB holds with constant
        \begin{equation*}
            \mu_{\mathcal{K}} = \frac{1}{L} + \frac{2}{\nu_{\mathcal{K}}},
        \end{equation*}
    which proves Theorem \ref{thm:equivalence}.
\end{proof}

\begin{proof}[Proof of Lemma \ref{lem:flow-invariance}]
    We begin with constructing a curve sequence staying in $\mathcal{K}$ and converging to the subgradient flow $\chi_{x}(t)$ in $[0,T]$, where $T$ is arbitrarily prefixed.
    To do this, we use the proximal gradient sequence and the interpoland from this sequence.
    Let $x_{0}=x\in\mathcal{K}$.
    For $\lambda\in (0,1/(2L))$, let
    \[
    x_{k+1}:=\prox_{\lambda g}\bigl(x_k-\lambda\nabla f(x_k)\bigr), \quad k=0,1,\ldots, K-1 \quad \text{with}\;K=\lfloor T/\lambda\rfloor. \]
    By Assumption~\ref{assum:stability of proximal gradient update}, this proximal gradient sequence stays in $\mathcal{K}$.
    By the optimality condition of the proximal step,
    we have
    \[
0\in \partial g(x_{k+1})+\frac{1}{\lambda}\bigl(x_{k+1}-x_k+\lambda\nabla f(x_k)\bigr).\]
Rearranging yields
\[
\frac{x_{k+1}-x_k}{\lambda}
+\nabla f(x_k)-\nabla f(x_{k+1})
\in -\partial F(x_{k+1}).
\]
Here, we consider a piecewise affine interpolant
\begin{align*}
u_{\lambda}(t)&:=x_{k}+(t-k\lambda)\frac{x_{k+1}-x_{k}}{\lambda}, \quad t\in [k\lambda, (k+1)\lambda).
\end{align*}
Observe that
\[
\dot{u}_{\lambda}(t)=\frac{x_{k+1}-x_{k}}{\lambda},\quad t\in (k\lambda,(k+1)\lambda).
\]
Then the squared path length satisfies
\[
\int_{0}^{T} \|\dot{u}_{\lambda}(t)\|^{2}_{2}dt
= \sum_{k=0}^{K-1} \frac{\|x_{k+1}-x_{k}\|^{2}_{2}}{\lambda}.
\]
By the proximal gradient descent lemma \citep[Theorem 5]{Karimi2016PL},
we have that for $\lambda\le1/L$,
\[
F(x_{k+1}) \le F(x_k) - \frac{\lambda}{2} \mathcal{D}_g(x_k, 1/\lambda).
\]
Also, we have the following lower bound on $\mathcal{D}_{g}(x_{k},1/\lambda)$:
\[
\frac{\lambda}{2} \mathcal{D}_g(x_k, 1/\lambda)\ge \frac{1-\lambda L}{2\lambda}\|x_{k+1}-x_{k}\|_{2}^{2}.
\]
Together, we have
\[
        F(x_{k+1}) \le F(x_k) - \frac{1-\lambda L}{2\lambda}\|x_{k+1}-x_{k}\|_{2}^{2}.
\]
Since $\lambda\le 1/(2L)$, we have $(1-\lambda L)/(2\lambda)\ge 1/(4\lambda)$, so summing over
$k=0,\dots,K-1$ yields
\begin{equation}
\label{eq: path length formula}
\sum_{k=0}^{K-1}
\frac{\|x_{k+1}-x_k\|_2^2}{\lambda}
\le 4\bigl(F(x)-F^{*}\bigr).
\end{equation}
The LHS is the squared path length and thus we obtain $\int_0^T \|\dot u_\lambda(t)\|_2^2\,dt \le 4\bigl(F(x)-F^*\bigr).$
Also, we have
\[
\int_{0}^{T} \|u_{\lambda}(t)\|^{2} dt \le 2\lambda\sum_{k=0}^{K-1}\|x_{k}\|_{2}^{2} + 2\lambda^{2}\sum_{k=0}^{K-1}
\frac{\|x_{k+1}-x_{k}\|_{2}^{2}}{\lambda} \le 2\lambda\sum_{k=0}^{K-1}\|x_{k}\|_{2}^{2} + 8\lambda^{2}(F(x)-F^{*}).
\]
Here, from the nonexpansiveness of the proximal operator \citep[e.g.,][]{parikh2014proximal}, we have that for arbitrary $x^{*}\in\mathcal{X}^{*}$,
\[
\|x_{k+1}-x^{*}\|_{2}
\le \|(x_{k}-x^{*})-\lambda(\nabla f(x_{k})-\nabla f(x^{*}))\|_{2}
\le (1+\lambda L)\|x_{k}-x^{*}\|, \quad k=0,\ldots,K-1,
\]
where the last inequality follows from the $L$-Lipschitz continuity of $\nabla f$.
Then the discrete Gr\"{o}nwall inequality gives $\|x_{k}\|_{2} \le \|x^{*}\|_{2} + \mathrm{e}^{LT}\|x-x^{*}\|_{2}.$ Hence, the Sobolev norm of $u_{\lambda}(\cdot)$ is uniformly bounded:
\[
\sup_{\lambda\in(0,1/2L)}
\left\{\int_0^T \| u_\lambda(t)\|_2^2\,dt 
+
\int_0^T \|\dot u_\lambda(t)\|_2^2\,dt \right\} \le C_{x,T}<\infty.
\]
Thus we can take a subsequence $u_{\lambda}$ with 
$ u_{\lambda} \to u$ and $\dot{u}_{\lambda}\to\dot{u}$ in the weak sense.
Here by the Rellich--Kondrachov theorem \citep[Theorem 9.16]{brezis2011functional}, we also have $\sup_{t\in [0,T]}|u_{\lambda}(t)- u(t)|\to 0$. 

We will show that $u(t)$ is a subgradient flow and $u(t)$ is $\chi_{x}(t)$ by its uniqueness.
Since $F$ is semi-convex, the differential inclusion $\dot u(t)\in -\partial F(u(t))$
for a.e.~$t\in(0,T)$ is equivalent to the variational inequality
\[
F(z)\ge F(u(t))+\langle -\dot u(t), z-u(t)\rangle
-\frac{\rho}{2}\|z-u(t)\|_{2}^{2}
\quad\text{for all }z\in\mathbb R^d
\;\text{and for a.e.~$t\in(0,T)$};
\]
see \citet{albano1999singularities,ambrosio2005gradient} for details.
Therefore, it suffices to prove that for arbitrary nonnegative $\phi(t)\in L^{2}(0,T)$,
\begin{align}
\label{eq: target EVI}
\int_{0}^{T} \phi(t) F(z) dt
\ge \int_{0}^{T} \phi(t)F(u(t))dt
+ \int_{0}^{T}\phi(t)\langle -\dot u(t), z-u(t)\rangle dt
-\frac{\rho}{2}\int_{0}^{T}
\phi(t)\|z-u(t)\|_{2}^{2}dt.
\end{align}
Fix nonnegative $\phi(t)\in L^{2}(0,T)$ arbitrarily.
By the optimality condition of the proximal step, we have
\[
w_{\lambda}(t):=\dot{u}_{\lambda}(t)
+\nabla f(x_k)-\nabla f(x_{k+1})
\in -\partial F(x_{k+1}).
\]
Setting $y_{\lambda}(t):=x_{k+1},\, t\in [k\lambda,(k+1)\lambda)$, this is equivalent to
\begin{align}
\label{eq: variational inequality in y and w}
F(z) &\ge F(y_{\lambda}(t)) + \langle -w_{\lambda}(t), z-y_{\lambda}(t)\rangle
-\frac{\rho}{2}\|z-y_{\lambda}(t)\|_{2}^{2}
\quad
\text{for all }z\in\mathbb R^d
\;
\text{and for a.e.~$t\in(0,T)$}.
\end{align}
Observe that
\begin{align}
\sup_{t\in(0,T)}\|w_{\lambda}(t)-\dot{u}_{\lambda}(t)\|^{2}_{2} 
&\le \max_{k=0,\ldots,K-1}\|\nabla f(x_{k})-\nabla f(x_{k+1})\|_{2}^{2}\nonumber\\
&\le \max_{k=0,\ldots,K-1}L^{2}\|x_{k+1}-x_{k}\|^{2}_{2} \label{ineq1:lemproof} \\
&\le L^{2}\lambda \sum_{k=0}^{K -1}
\frac{\|x_{k+1}-x_k\|_2^2}{\lambda} \nonumber\\
&\le 4L^{2}\lambda\bigl(F(x)-F^{*}\bigr) \to 0 \quad\text{as $\lambda\to 0$}, \label{ineq2:lemproof}
\end{align}
where the second inequality \eqref{ineq1:lemproof} follows from the smoothness of $f$, and the last inequality \eqref{ineq2:lemproof} follows from \eqref{eq: path length formula}.
Together with the weak convergence of $\dot{u}_{\lambda}$, this implies $w_{\lambda}\to \dot{u}$ in the weak sense
up to a subsequence.
Further, from \eqref{eq: path length formula}, we have
\[
\sup_{t\in(0,T)}\|u_{\lambda}(t)-y_{\lambda}(t)\|_{2}^{2}\le \max_{k=0,\ldots,K-1}\|x_{k+1}-x_{k}\|^{2}_{2} \le 4\lambda (F(x)-F^{*}),
\]
implying $y_{\lambda}\to u$ in $C(0,T)$ up to a subsequence.
By the lower semicontinuity of $F$
and the Fatou lemma,
taking a limit with respect to $\lambda$ gives
\[
\liminf_{\lambda\to 0} \int_{0}^{T}\phi(t) F(y_{\lambda}(t))dt \ge \int_{0}^{T} \phi(t)F(u(t))dt.
\]
Also, by the weak convergence of $w_{\lambda}$ and by the strong convergence of $y_{\lambda}$, we have
\begin{align*}
\int_{0}^{T}\phi(t)\langle 
-w_{\lambda}(t),z-y_{\lambda}(t)
\rangle
&=\int_{0}^{T}\phi(t)\langle 
-w_{\lambda}(t)+\dot{u}(t)-\dot{u}(t),z-u(t)+u(t)-y_{\lambda}(t)
\rangle dt
\\
&\to \int_{0}^{T}\phi(t)\langle -\dot{u}(t),z-u(t)\rangle dt \quad \text{as $\lambda\to 0$}.
\end{align*}
Therefore we obtain \eqref{eq: target EVI}
and the differential inclusion for $u$.
By the construction, we have $u(0) = x_{0} =x.$ Hence, this $u(t)$ is a subgradient flow. By the uniqueness of the subgradient flow, $u(t)$ is thus $\chi_{x}(t)$.
Since $y_\lambda(t)\in\mathcal K$ for all $t\in[0,T]$, and since $y_\lambda\to u=\chi_x$
uniformly on $[0,T]$, the closedness of $\mathcal K$ (Assumption~\ref{assum:Kclosed}) implies
\[
\chi_x(t)\in \mathcal K
\qquad \text{for } t\in[0,T],
\]
which completes the proof.
\end{proof}

\section{
Connection to the Restricted Quadratic Growth Condition
}
\label{sec:QG}

In this appendix, 
we show that the restricted quadratic growth condition defined below implies the restricted PL condition. 

\begin{definition}[Restricted Quadratic Growth (QG)]
\label{def:restricted-qg}
The objective function $F$ satisfies the \emph{$\mathcal{K}$-restricted quadratic growth condition with constant $\gamma_{\mathcal{K}} > 0$} if
$$
    F(x) - F^* \ge \frac{\gamma_{\mathcal{K}}}{2} \dist(x, \mathcal{X}^*)^2 \quad \text{for all $x \in \mathcal{K}\cap \dom (F)$},
$$
where $\mathcal{X}^*$ is the nonempty optimal set.
\end{definition}

In the case of a convex objective function, the restricted QG condition is a sufficient condition for the restricted PL inequality.

\begin{lemma}[Restricted QG implies Restricted PL]
\label{lem:qg-to-pl}
Assume that $L\ge \gamma_{\mathcal{K}}/2$.
If a convex function $F$ satisfies the $\mathcal{K}$-restricted QG condition with constant $\gamma_{\mathcal{K}}$, then it satisfies the $\mathcal{K}$-restricted proximal PL inequality with constant $\nu_{\mathcal{K}} = \gamma_{\mathcal{K}}/2$.
\end{lemma}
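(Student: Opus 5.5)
We plan to mimic the proof of \cref{prop:indicator-pl}: lower-bound $-\frac{1}{2\nu}\mathcal{D}_g(x,\nu)$ by evaluating the inner minimization at a well-chosen point, then use monotonicity of $\mathcal{D}_g$ in its second argument together with $\nu_{\mathcal{K}}=\gamma_{\mathcal{K}}/2\le L$ to pass from $\nu_{\mathcal{K}}$ to $L$. The difference is that convexity of $F$, rather than strong convexity of the loss, will supply the linear lower model.

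Fix $x\in\mathcal{K}\cap\dom(F)$, let $x^*=\mathcal{P}_{\mathcal{X}^*}(x)$ and $\Delta:=F(x)-F^*$. For $\theta\in[0,1]$ we set $y_\theta:=x+\theta(x^*-x)$ and bound
\[
-\frac{1}{2\nu}\mathcal{D}_g(x,\nu)\le \langle\nabla f(x),y_\theta-x\rangle+\frac{\nu}{2}\theta^2\|x^*-x\|_2^2+g(y_\theta)-g(x).
\]
Convexity of $g$ gives $g(y_\theta)-g(x)\le\theta(g(x^*)-g(x))$. For the smooth part we need $\langle\nabla f(x),x^*-x\rangle+g(x^*)-g(x)\le F^*-F(x)=-\Delta$. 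This is exactly the convexity inequality of $F$ at $x$ in the direction of $x^*$, taking the subgradient $\nabla f(x)+\xi$ with $\xi\in\partial g(x)$. We will state this step carefully using convexity of $F$ along the segment: $F(y_\theta)\le (1-\theta)F(x)+\theta F^*$. We then absorb the linearization of $f$ using the descent-type inequality $f(x)+\langle\nabla f(x),y_\theta-x\rangle\le f(y_\theta)$, which is valid because $f$ is convex; if only $F$ is assumed convex, we instead argue directly via the segment inequality. This yields
\[
-\frac{1}{2\nu}\mathcal{D}_g(x,\nu)\le -\theta\Delta+\frac{\nu\theta^2}{2}\dist(x,\mathcal{X}^*)^2\le -\theta\Delta+\frac{\nu\theta^2}{\gamma_{\mathcal{K}}}\Delta,
\]
where the last step uses restricted QG, which is valid since $x\in\mathcal{K}$.

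Choosing $\nu=\gamma_{\mathcal{K}}/2$ and $\theta=1$ gives $-\frac{1}{2\nu}\mathcal{D}_g(x,\nu)\le -\Delta/2$, i.e.\ $\frac12\mathcal{D}_g(x,\nu)\ge \frac{\nu}{2}\Delta\cdot 2=\nu\Delta$. Checking constants: $\mathcal{D}_g(x,\nu)\ge \nu\Delta$, so $\frac12\mathcal{D}_g\ge\frac{\gamma_{\mathcal{K}}}{4}\Delta$. If this loses a factor $2$ relative to the target $\nu_{\mathcal{K}}=\gamma_{\mathcal{K}}/2$, we will instead evaluate at $\nu=L$ with $\theta$ optimized, and fall back on monotonicity only where it helps. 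Concretely, taking $\nu=\gamma_{\mathcal{K}}/2$ and minimizing $-\theta+\theta^2/2$ at $\theta=1$ gives the factor $1/2$, so the constant to aim for is $\mathcal{D}_g(x,\gamma_{\mathcal{K}}/2)\ge\gamma_{\mathcal{K}}\Delta/2$. Finally, monotonicity with $\gamma_{\mathcal{K}}/2\le L$ transfers this to $\mathcal{D}_g(x,L)$.

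The main obstacle is the bookkeeping of constants. There are two issues: which scaling of $\mathcal{D}_g$ is intended in the PL definition, and whether the linearization step requires convexity of $f$ itself or only of $F$. We expect to resolve the first by carefully tracking the factor $2\alpha$ in the definition of $\mathcal{D}_g$. For the second, if convexity of $f$ is not available, we will use the segment convexity of $F$ together with $L$-smoothness, paying an extra $\frac{L}{2}\theta^2\|x^*-x\|^2$ that is again controlled by QG.
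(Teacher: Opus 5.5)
Your argument is the paper's argument, and it is correct as far as it goes. At $\theta=1$ your chain is exactly the appendix proof: evaluate the inner minimum at $x^*=P_{\mathcal{X}^*}(x)$, use convexity of $f$ and $g$, apply restricted QG, take $\nu=\gamma_{\mathcal K}/2$, then pass to $L$ by monotonicity. Your bookkeeping is also right: this delivers $\mathcal{D}_g(x,L)\ge\frac{\gamma_{\mathcal K}}{2}(F(x)-F^*)$, i.e.\ $\frac12\mathcal{D}_g(x,L)\ge\frac{\gamma_{\mathcal K}}{4}(F(x)-F^*)$. The paper reaches this same inequality and then declares it to be the PL inequality with $\nu_{\mathcal K}=\gamma_{\mathcal K}/2$, overlooking the factor $\frac12$ in its own definition. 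Your proposed repair cannot recover the factor. With $\nu=L$, the bound $-\theta+L\theta^2/\gamma_{\mathcal K}$ is minimized at $\theta=\gamma_{\mathcal K}/(2L)\le 1$ and again gives $\mathcal{D}_g(x,L)\ge\frac{\gamma_{\mathcal K}}{2}(F(x)-F^*)$. More generally, no joint choice of $\theta\in[0,1]$ and $\nu\le L$ beats this bound.

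In fact no argument can reach $\gamma_{\mathcal K}/2$, because that constant is false. Take $d=1$, $g\equiv 0$ (so $\mathcal{D}_g(x,\alpha)=f'(x)^2$ for every $\alpha$), $\mathcal{K}=\mathbb{R}$, and
\[
f(x)=\begin{cases} x^2, & |x|\le\tfrac12,\\ |x|-\tfrac14, & \tfrac12\le|x|\le\tfrac32,\\ x^2-2|x|+2, & |x|\ge\tfrac32. \end{cases}
\]
Here $f'$ is odd, nondecreasing and $2$-Lipschitz, so $f$ is convex with $L=2$, $\mathcal{X}^*=\{0\}$ and $F^*=0$. Moreover $f(x)\ge\frac12x^2$ everywhere. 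On the middle piece, $f(x)-\frac12x^2=-\frac12(t^2-2t+\frac12)\ge 0$ for $t=|x|\in[\frac12,\frac32]\subset[1-\frac{1}{\sqrt2},1+\frac{1}{\sqrt2}]$. On the outer piece, $f(x)-\frac12x^2=\frac12(|x|-2)^2$. So QG holds with $\gamma_{\mathcal K}=1\le 2L$.

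At $x=\frac32$ one has $\frac12\mathcal{D}_g(x,L)=\frac12<\frac58=\frac{\gamma_{\mathcal K}}{2}(F(x)-F^*)$, whereas $\frac{\gamma_{\mathcal K}}{4}(F(x)-F^*)=\frac{5}{16}$ is satisfied. Shrinking the Huber width and moving the second kink toward $|x|=2$ (at the cost of a larger $L$) pushes the ratio down to $\frac14$, so $\gamma_{\mathcal K}/4$ is sharp. The conclusion your proof, and the paper's, actually supports is therefore $\nu_{\mathcal K}=\gamma_{\mathcal K}/4$; the statement should be corrected, not the proof.

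Two smaller points.
\begin{itemize}
\item Your sentence deriving $\langle\nabla f(x),x^*-x\rangle+g(x^*)-g(x)\le -(F(x)-F^*)$ from the subgradient inequality of $F$ with $\nabla f(x)+\xi$ is wrong. That inequality contains $\langle\xi,x^*-x\rangle$, and $\langle\xi,x^*-x\rangle\le g(x^*)-g(x)$ goes the wrong direction. You need convexity of $f$ itself, which the paper also uses tacitly although the lemma only says ``convex $F$''.
\item Your fallback for merely convex $F$ (segment convexity plus the lower quadratic bound from $L$-smoothness) is valid. However, with the optimal $\theta=\gamma_{\mathcal K}/(4L)$ it only yields $\nu_{\mathcal K}=\gamma_{\mathcal K}/8$.
\end{itemize}
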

\begin{proof}
    Let $x \in \mathcal{K}\cap \dom (F)$ and let $x^* = P_{\mathcal{X}^*}(x)$ be the projection onto the optimal set. Recall the definition of the generalized gradient size for a step size $\lambda > 0$:
    \begin{equation*}
        \mathcal{D}_g(x, \lambda) = -2\lambda \min_{y\in\mathbb{R}^{d}} \Big{[} \underbrace{\langle \nabla f(x), y - x \rangle + \frac{\lambda}{2}\|y - x\|^2_2 + g(y) - g(x)}_{=:Q_\lambda(y)} \Big{]}.
    \end{equation*}
    Let $Q_\lambda(y)$ denote the term inside the minimization. Since $x^*$ is a feasible point, the minimum over all $y$ is bounded above by the value at $x^*$, i.e., 
    \begin{align}
    \label{eq: bounding Q}
    \min_y Q_\lambda(y) \le Q_\lambda(x^*).
    \end{align}
    
Evaluating $Q_\lambda(y)$ at $x^*$ gives
   $$
        Q_\lambda(x^*) = \langle \nabla f(x), x^* - x \rangle + \frac{\lambda}{2}\|x^* - x\|^2_2 + g(x^*) - g(x).
   $$
    By the convexity of $f$, we have the inequality $\langle \nabla f(x), x^* - x \rangle \le f(x^*) - f(x)$. Substituting this into the expression above yields
    \begin{align}
        Q_\lambda(x^*) &\le f(x^*) - f(x) + \frac{\lambda}{2}\|x^* - x\|^2_2 + g(x^*) - g(x) \label{ineq:qg}\\
        &= -(F(x) - F^*) + \frac{\lambda}{2}\|x^* - x\|^2_2. \nonumber
    \end{align}
    We now invoke the $\mathcal{K}$-restricted QG condition, $\|x^* - x\|^2_2 \le (2/\gamma_{\mathcal{K}}) (F(x) - F^*)$. Substituting into the quadratic term in the inequality \eqref{ineq:qg} yields
    \begin{align*}
        Q_\lambda(x^*) &\le -(F(x) - F^*) + \frac{\lambda}{2} \left( \frac{2}{\gamma_{\mathcal{K}}} (F(x) - F^*) \right) \\
        &= \left( \frac{\lambda}{\gamma_{\mathcal{K}}} - 1 \right) (F(x) - F^*).
    \end{align*}
    From \eqref{eq: bounding Q}, we substitute this upper bound back into the definition of $\mathcal{D}_g$ and obtain that for any $\lambda>0$,
    \begin{align}
        \mathcal{D}_g(x, \lambda) &= -2\lambda \min_{y\in\mathbb{R}^{d}} Q_\lambda(y) \nonumber \\
        &\ge -2\lambda \left( \frac{\lambda}{\gamma_{\mathcal{K}}} - 1 \right) (F(x) - F^*) \nonumber \\
        &= 2\lambda \left( 1 - \frac{\lambda}{\gamma_{\mathcal{K}}} \right) (F(x) - F^*).\label{eq:max}
    \end{align}
    Notice that \eqref{eq:max} is maximized at $\lambda = \gamma_{\mathcal{K}}/2$. Substituting this optimal step size yields
    \begin{equation*}
        \mathcal{D}_g(x, \gamma_{\mathcal{K}}/2) \ge 2\left(\frac{\gamma_{\mathcal{K}}}{2}\right) \left( 1 - \frac{1}{2} \right) (F(x) - F^*) = \frac{\gamma_{\mathcal{K}}}{2} (F(x) - F^*).
    \end{equation*}
    Since $\mathcal{D}_{g}(x,\alpha)$ is monotonically increasing in $\alpha$ and $L\ge \gamma_{\mathcal{K}}/2$, we obtain
    \[
    \mathcal{D}_g(x, L)
    \ge \frac{\gamma_{\mathcal{K}}}{2} (F(x) - F^*).
    \]
    This is exactly the restricted proximal PL inequality with constant $\nu_{\mathcal{K}} = \gamma_{\mathcal{K}}/2$.
\end{proof}

\section{Proof of Proposition \ref{prop:nonsmooth-polyhedral-pl}}
\label{app:nonsmooth-polyhedral-g:rev}

\begin{proof}

Fix any $x\in \mathcal K\cap\dom(F)$.

\textit{Step 1: Connecting the form convexity with the metric subregularity.} We begin with converting the firm convexity of $g$ relative to $-A^{\top}\xi^{*}$ to the following restricted metric subregularity constant.
\begin{lemma}[Restricted Metric Subregularity]
\label{lem: restricted metric subregularity constant}
Fix $\mathcal{K}$. For a polyhedral regularizer $g$ satisfying the firm convexity with constant $\gamma_{\mathcal{K}}$,
we have
\[
\dist(x,M[\xi^*])
\le
\frac{2}{\gamma_{\mathcal{K}}}\,
\dist\bigl(-A^\top \xi^*,\,\partial g(x)\bigr)
\qquad
\text{for all $x\in\mathcal{K}$}.
\]
\end{lemma}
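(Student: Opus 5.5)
The plan is to reproduce the standard ``quadratic growth implies metric subregularity'' argument of \citet{Drusvyatskiy2018ErrorBounds}, applied to the convex tilted function
\[
h(u) := g(u) + \langle A^\top \xi^*, u\rangle .
\]
First observe that $M[\xi^*] = \argmin_u h(u)$, and write $h^* := \inf_{u\in M[\xi^*]} h(u) = \min_u h(u)$. Then $\partial h(x) = \partial g(x) + A^\top\xi^*$, so that
\[
\dist\bigl(0,\partial h(x)\bigr) = \dist\bigl(-A^\top\xi^*,\partial g(x)\bigr).
\]
In this notation, Definition~\ref{def: restricted firm convexity constant} says exactly that $h$ enjoys restricted quadratic growth on $\mathcal{K}$:
\[
h(x)-h^* \ge \frac{\gamma_{\mathcal K}}{2}\,\dist\bigl(x,M[\xi^*]\bigr)^2 \quad \text{for all } x\in\mathcal K.
\]

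Now fix $x\in\mathcal K$. If $\partial g(x)=\emptyset$ the right-hand side is $+\infty$, but $g$ is finite-valued, so this case does not occur. Pick any $v\in\partial h(x)$ and let $\bar x := P_{M[\xi^*]}(x)$, which exists because $M[\xi^*]$ is a nonempty closed polyhedron. Convexity of $h$ gives
\[
h^* = h(\bar x) \ge h(x) + \langle v,\bar x - x\rangle,
\]
so by Cauchy--Schwarz
\[
h(x)-h^* \le \|v\|_2\,\dist\bigl(x,M[\xi^*]\bigr).
\]
Combining this with the quadratic growth inequality yields
\[
\frac{\gamma_{\mathcal K}}{2}\dist\bigl(x,M[\xi^*]\bigr)^2 \le \|v\|_2\,\dist\bigl(x,M[\xi^*]\bigr).
\]
If $\dist(x,M[\xi^*])=0$ the claim is trivial. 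Otherwise, divide through and take the infimum over $v\in\partial h(x)$ to obtain
\[
\dist\bigl(x,M[\xi^*]\bigr) \le \frac{2}{\gamma_{\mathcal K}}\dist\bigl(-A^\top\xi^*,\partial g(x)\bigr).
\]

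The argument is essentially routine. The main point to check is that only the growth inequality at the single point $x\in\mathcal K$ is used, together with the global convexity inequality for $h$, which holds everywhere. No invariance of $\mathcal K$ and no behaviour of $h$ outside $\mathcal K$ is needed, so the restriction causes no difficulty. A second check is that the infimum in the firm-convexity definition coincides with $\min h$, which holds because $M[\xi^*]$ is the argmin set of $h$ and is nonempty (it contains $\mathcal X^*$).
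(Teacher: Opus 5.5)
Your proposal is correct and matches the paper's proof essentially line by line. Both tilt $g$ by $\langle A^\top\xi^*,\cdot\rangle$, combine the subgradient inequality at the projection onto $M[\xi^*]$ with the restricted firm-convexity (quadratic growth) inequality, divide by the distance, and take the infimum over $v$ using $\partial\tilde{g}(x)=\partial g(x)+A^\top\xi^*$. Your explicit remark that the infimum in the firm-convexity definition equals the value at the projection, because $M[\xi^*]$ is the nonempty argmin set containing $\mathcal{X}^*$, fills in a step the paper leaves implicit.
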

\begin{proof}[Proof of Lemma \ref{lem: restricted metric subregularity constant}]
Define the tilt perturbation of $g$ by $\tilde{g}(x):= g(x)+\langle A^\top\xi^* , x\rangle.$ Fix \(x\in\mathcal K\) and let \(z\in P_{M[\xi^*]}(x)\), where $P_{M[\xi^*]}(x)$ is the projection of $x$ onto $M[\xi^*]$.
Then we have
$\|x-z\|=\dist(x,M[\xi^*]).$
Take any \(v\in \partial \tilde{g}(x)\). By the subgradient inequality
$\tilde{g}(z)\ge \tilde{g}(x)+\langle v,z-x\rangle$,
we get
\[
\tilde{g}(x)-\tilde{g}(z)\le \langle v,x-z\rangle \le \|v\|\,\|x-z\|.
\]
On the other hand, restricted firm convexity yields $\tilde{g}(x)-\tilde{g}(z)\ge \frac{\gamma_{\mathcal K}}{2}\|x-z\|^2.$ Combining these two inequalities gives $\frac{\gamma_{\mathcal K}}{2}\|x-z\|^2 \le \|v\|\,\|x-z\|.$ 

If \(x\notin M[\xi^*]\), dividing both sides by \(\|x-z\|\) gives
\[
\dist(x,M[\xi^*])=\|x-z\|
\le
\frac{2}{\gamma_{\mathcal K}}\|v\|,
\]
where the inequality is trivial when \(x\in M[\xi^*]\). 
Since $v\in \partial \tilde{g}(x)$ is arbitrary, taking the infimum over $v\in\partial \tilde{g}(x)$ yields $\dist(x,M[\xi^*])
\le
\frac{2}{\gamma_{\mathcal K}} \cdot \dist(0,\partial \tilde{g}(x)).$

Together with \(\partial \tilde{g}(x)=\partial g(x)+A^\top\xi^*\),
this gives
\[
\dist(0,\partial \tilde{g}(x))
=
\dist(-A^\top\xi^*,\partial g(x)),
\]
which proves the claim.
\end{proof}

We now return to the proof of Proposition \ref{prop:nonsmooth-polyhedral-pl}.

\textit{Step 2: Constructing the error bound using the KKT residual.}
Set $r(x):=\dist(0,\partial F(x))$. We have $r(x)=
\dist\bigl(0,\ A^\top \nabla f(Ax)+\partial g(x)\bigr).$ Choose $v_x\in\partial g(x)$ such that
$r(x)=\|A^\top \nabla f(Ax)+v_x\|_2$.
Observe that $\dist(-A^\top \xi^*,\partial g(x))
\le
\|v_x+A^\top \xi^*\|_2.
$ By the triangle inequality $\|v_x+A^\top \xi^*\|_2
\le
\|A^\top \nabla f(Ax)+v_x\|_2
+
\|A^\top(\nabla f(Ax)-\xi^*)\|_2$, we have
$$
\dist(-A^\top \xi^*,\partial g(x))
\le
r(x)+\|A^\top(\nabla f(Ax)-\xi^*)\|_2.
$$
Recall that $\xi^*=\nabla f(y^*)$ and $f(A\cdot)$ is $L$-smooth.
Then we have
$$
\dist(-A^\top \xi^*,\partial g(x))
\le
r(x)+L\,\|Ax-y^*\|_2.
$$
Using Lemma \ref{lem: restricted metric subregularity constant}, we obtain,
\begin{equation}
\label{eq: distance to M upper bound}
\dist(x,M[\xi^*])
\le
\frac{2}{\gamma_{\mathcal K}}\,\dist(-A^\top \xi^*,\partial g(x))
\le
\frac{2}{\gamma_{\mathcal K}}r(x)+\frac{2}{\gamma_{\mathcal K}}L\,\|Ax-y^*\|_2.
\end{equation}
Since $M[\xi^*]=\{z \mid B z\le c\}$ is polyhedral, we have that for any $z\in M[\xi^{*}]$, 
$$
\|(Bx-c)_+\|_2 
\le  \|(Bx-Bz)_{+}\|_{2}
\le \|B\|\,\dist(x,M[\xi^*]).
$$

Thus, combining with \eqref{eq: distance to M upper bound}, we obtain
\begin{equation}
\label{eq: bounding polyhedral violation}
\|(Bx-c)_+\|_2
\le
\|B\|\frac{2}{\gamma_{\mathcal K}}r(x)
+
\|B\|\frac{2}{\gamma_{\mathcal K}}L\,\|Ax-y^*\|_2.
\end{equation}
Now, by assumption,
we have the restricted Hoffman bound on $\mathcal K$:
\begin{equation}
\label{eq: restricted Hoffman bound on the optimal set}
\dist(x,\mathcal X^*)
\le
H_{\mathcal K}\Bigl(\|Ax-y^*\|_2+\|(Bx-c)_+\|_2\Bigr).
\end{equation}
Combining \eqref{eq: bounding polyhedral violation} and \eqref{eq: restricted Hoffman bound on the optimal set} yields
\begin{align}
\label{eq: distance bound 1}
\dist(x,\mathcal X^*)
\le
a_{\mathcal K}\|Ax-y^*\|_2+b_{\mathcal K}r(x),
\end{align}
where
\[
a_{\mathcal K}:=
H_{\mathcal K}\left(1+\|B\|\frac{2}{\gamma_{\mathcal K}}L\right),
\qquad
b_{\mathcal K}:=
H_{\mathcal K}\|B\|\frac{2}{\gamma_{\mathcal K}}.
\]

\textit{Step 3: Constructing a subdifferential error bound.}
Next, let $x^{*}\in P_{\mathcal X^*}(x)$ be a projection of $x$ onto $\mathcal X^*$.
Since $x^{*}\in\mathcal X^*$, we have $Ax^{*}=y^*$ and $-A^\top \xi^*\in \partial g( x^{*})$.

By monotonicity of $\partial g$ (from the convexity of $g$), we have $\langle v_x + A^\top\xi^*,\,x-x^{*}\rangle \ge 0.$ On the other hand, the $\alpha$-strongly convexity of $f$ implies
\[
\langle \nabla f(Ax)-\xi^*,\,Ax-y^*\rangle
=
\langle \nabla f(Ax)-\nabla f(y^*),\,Ax-y^*\rangle
\ge
\alpha \|Ax-y^*\|_2^2.
\]
Adding these two inequalities gives
$
\langle A^\top \nabla f(Ax)+v_x,\ x-x^{*}\rangle
\ge
\alpha \|Ax-y^*\|_2^2.
$
Hence, by the Cauchy--Schwarz inequality, we get:
\[
\alpha \|Ax-y^*\|_2^2
\le
\|A^\top \nabla f(Ax)+v_x\|_2\,\|x-x^{*}\|_2
=
r(x)\,\dist(x,\mathcal X^*).
\]
Together with \eqref{eq: distance bound 1}, we arrive at
$
\alpha \|Ax-y^*\|_2^2
\le
r(x)\bigl(a_{\mathcal K}\|Ax-y^*\|_2+b_{\mathcal K}r(x)\bigr).
$
Now, by setting $t:=\|Ax-y^*\|_2$, we have a quadratic inequality with respect to $t$:
$$
\alpha t^2-a_{\mathcal K}r(x)t-b_{\mathcal K}r(x)^2\le 0.
$$
Solving this quadratic inequality for $t\ge 0$ yields
\[
\|Ax-y^*\|_2 \le \tau_{\mathcal K}\,r(x)
\quad
\text{with}\;
\tau_{\mathcal K}
:=
\frac{a_{\mathcal K}+\sqrt{a_{\mathcal K}^2+4\alpha b_{\mathcal K}}}{2\alpha}.
\]
Substituting back into \eqref{eq: distance bound 1}, we obtain $\dist(x,\mathcal X^*)
\le
\bigl(a_{\mathcal K}\tau_{\mathcal K}+b_{\mathcal K}\bigr)\,r(x).$ Therefore,
letting $\kappa_{\mathcal K}:=a_{\mathcal K}\tau_{\mathcal K}+b_{\mathcal K}$, we get
\begin{equation}
\label{eq: distance bound 2}
\dist(x,\mathcal X^*)
\le
\kappa_{\mathcal K}\,\dist(0,\partial F(x))
\qquad
\text{for all }x\in\mathcal K\cap\dom(F).
\end{equation}

\textit{Step 4: From the subdifferential error bound to the restricted proximal error bound.}
Define the proximal gradient update from $x$ as:
\[
x^+:=\prox_{g/L}\!\left(x-\frac{1}{L} A^{\top} \nabla f(Ax)\right).
\]
Then we have $\mathcal G_{1/L}(x)=L(x-x^+).$ By the first-order optimality condition of the proximal step,
\[
0\in \partial g(x^+) + A^{\top}\nabla f(Ax) + L(x^+-x).
\]
Rearranging yields
\[
\mathcal G_{1/L}(x)- A^{\top}\nabla f(Ax)+ A^{\top}\nabla f(Ax^+)\in \partial F(x^+),
\]
and hence we obtain
\[
\dist(0,\partial F(x^+))
\le
\|\mathcal G_{1/L}(x)- A^{\top}\nabla f(Ax)+A^{\top}\nabla f(Ax^+)\|_2
\le
\|\mathcal G_{1/L}(x)\|_2+\|A^{\top}\nabla f(Ax^+)- A^{\top}\nabla f(Ax)\|_2.
\]
Using the Lipschitz continuity of $\nabla f(A\cdot)$ 
(where $L$ is its Lipschitz constant by assumption)
and the identity
$\|x-x^+\|_2=(1/L)\|\mathcal G_{1/L}(x)\|_2$, we obtain $\dist(0,\partial F(x^+))
\le
2\|\mathcal G_{1/L}(x)\|_2.$

By assumption, the inequality \eqref{eq: distance bound 2} holds for $x^{+}$.
Thus we get
\[
\dist(x^+,\mathcal X^*)
\le
\kappa_{\mathcal K}\,
\dist(0,\partial F(x^{+}))
\le
2\kappa_{\mathcal K}\|\mathcal G_{1/L}(x)\|_2.
\]
Finally, by the triangle inequality, we get
\[
\dist(x,\mathcal X^*)
\le
\|x-x^+\|_2+\dist(x^+,\mathcal X^*)
\le
\frac{1}{L}\|\mathcal G_{1/L}(x)\|_2
+
2\kappa_{\mathcal K}\|\mathcal G_{1/L}(x)\|_2,
\]
which proves the $\mathcal K$-restricted EB inequality (as well as the restricted PL inequality):
\[
\dist(x,\mathcal X^*)
\le
\mu_{\mathcal K}\,\|\mathcal G_{1/L}(x)\|_2
\quad\text{with}\quad
\mu_{\mathcal K}
:=
\frac{1}{L}+2\kappa_{\mathcal K}
\]
\end{proof}

Note that the constant in the restricted EB inequality can be bounded by
$
L^{-1}
+8\alpha^{-1} H_{\mathcal{K}}^{2}\bigl(1+\|B\|\gamma_{\mathcal{K}}L\bigr)^{2}
+8 H_{\mathcal{K}} \|B\| \gamma_{\mathcal{K}}.
$

\section{The Restricted Metric Subregularity Constant for LASSO}
\label{app:Restricted metric subregulairy for LASSO}

In this appendix, 
under a prior parameter restriction and strict complementarity,
we verify the bound
\[
\frac{2}{\gamma_{\mathcal K}}
\le
\max\left\{
\frac{F(\beta^0)}{\eta\,\delta_*},\,
\frac{F(\beta^0)}{2\eta^2}
\right\}
\]
for the LASSO objective
\[
F(\beta)=\frac12\|A\beta-y\|_2^2+\eta\|\beta\|_1,
\qquad \eta>0.
\]

\textit{Step 1: Representation of $M[\hat{\xi}]$.}
Recall the strict complementarity margin from \Cref{def:strict-complementarity}, $\delta_*:=\min_{i \mid \ |\hat{s}_i|<\eta}(\eta-|\hat{s}_i|).$
For each $i\in\{1,\dots,d\}$, let $M_i[\hat{s}_i]
:=
\{t\in\mathbb R\mid \ \hat{s}_i\in \eta\,\partial |t|\}.$
Then the first-order optimality set has the product form:
\[
M[\hat{\xi}]
=
\prod_{i=1}^d M_i[\hat{s}_i]
\]
and the squared distance from $M[\hat{\xi}]$ is separable: $\dist(\beta,M[\hat{\xi}])^{2}=\sum_{i=1}^{d}\dist(\beta_{i},M_{i}[\hat{s}_{i}])^{2}.$

\textit{Step 2: Separable lower bound on the tilt perturbation of $g$.} Define the tilt perturbation of $g$ by
\[
\tilde{g}(\beta):=g(\beta)+\langle A^{\top}\hat{\xi}, \beta\rangle
=\eta \|\beta\|_{1} - \langle \hat{s}, \beta\rangle,
\]
which can be rewritten as
\[
\tilde{g}(\beta)=\eta\sum_{i=1}^{d}|\beta_{i}| -\sum_{i=1}^{d}\hat{s}_{i}\beta_{i}.
\]
Also, by the definition of $M[\hat{\xi}]$, we have $\inf_{\beta\in M[\hat{\xi}]}\tilde{g}(\beta) = 0.$ We thus obtain the representation
\[
\tilde{g}(\beta)-\inf_{\beta\in M[\hat{\xi}]}\tilde{g}(\beta)=\sum_{i=1}^{d}\{
\eta |\beta_{i}|-\hat{s}_{i}\beta_{i}
\}
=\underbrace{\sum_{i\in I_{0}}\{
\eta |\beta_{i}|-\hat{s}_{i}\beta_{i}
\}}_{=:\mathcal{T}_{0}}
+\underbrace{\sum_{i\in I_{+}}\{
\eta |\beta_{i}|-\hat{s}_{i}\beta_{i}
\}}_{=:\mathcal{T}_{+}}
+\underbrace{\sum_{i\in I_{-}}\{
\eta |\beta_{i}|-\hat{s}_{i}\beta_{i}
\}}_{=:\mathcal{T}_{-}}.
\]
The term $\mathcal{T}_{0}$ is bounded using the strict complementarity margin:
\[
\mathcal{T}_{0}\ge \delta_{*}\sum_{i\in I_{0}}|\beta_{i}| = \delta_{*}\sum_{i\in I_{0}}\dist(\beta_{i},M_{i}[\hat{s}_{i}]).
\]
We arrange the terms $\mathcal{T}_{+}$ and $\mathcal{T}_{-}$ by their definition:
\[
\mathcal{T}_{+}+\mathcal{T}_{-} = 2\eta \sum_{i\in I_{+}\cup I_{-}}|\beta_{i}|
=2\eta \sum_{i\in I_{+}\cup I_{-}}\dist(\beta_{i},M_{i}[\hat{s}_{i}]),
\]
and hence we obtain
\begin{equation}
\label{eq: separable lower bound}
\tilde{g}(\beta)-\inf_{\beta\in M[\hat{\xi}]}\tilde{g}(\beta)
\ge 
\delta_{*}\sum_{i\in I_{0}}\dist(\beta_{i},M_{i}[\hat{s}_{i}])
+
2\eta \sum_{i\in I_{+}\cup I_{-}}\dist(\beta_{i},M_{i}[\hat{s}_{i}]).
\end{equation}

\textit{Step 3: From the linear growth in the restricted domain to the quadratic growth.} Recall we have a prior parameter restriction, $\Theta:=\{\beta\in\mathbb{R}^{d}\mid \|\beta\|_{\infty}\le F(\beta_{0})/\eta\}.$ For $\beta\in \Theta$, we have
\[
|\beta|_{i} \ge \frac{\eta}{F(\beta_{0})} |\beta_{i}|^{2},\quad i=1,\ldots,d,
\]
and thus we get
\begin{align}
\begin{split}
\delta_{*}\sum_{i\in I_{0}}\dist(\beta_{i},M_{i}[\hat{s}_{i}])
&\ge \frac{\delta_{*}\eta}{F(\beta_{0})}\sum_{i\in I_{0}}\dist(\beta_{i},M_{i}[\hat{s}_{i}])^{2},\\
2\eta \sum_{i\in I_{+}\cup I_{-}}\dist(\beta_{i},M_{i}[\hat{s}_{i}])
&\ge \frac{2\eta^{2}}{F(\beta_{0})}
\sum_{i\in I_{+}\cup I_{-}}\dist(\beta_{i},M_{i}[\hat{s}_{i}])^{2}.
\label{eq: from linear to quadratic}
\end{split}
\end{align}

\textit{Final step.} Finally, combining \eqref{eq: separable lower bound} with \eqref{eq: from linear to quadratic} yields:
\[
\tilde{g}(\beta)-\inf_{\beta\in M[\hat{\xi}]} \tilde{g}(\beta) \ge \min\left\{
\frac{\eta\delta_{*}}{F(\beta_{0})},
\frac{2\eta^{2}}{F(\beta_{0})}
\right\}
\dist(\beta,M[\hat{\xi}])^{2},
\]
which proves the claim.

\end{document}